\newtheorem{thm}{Theorem}
\newtheorem{lem}[thm]{Lemma}
\newtheorem{cor}[thm]{Corollary}
\newtheorem{prop}[thm]{Proposition}
\theoremstyle{definition}
\newtheorem{rem}[thm]{Remark}
\newtheorem{nota}[thm]{Notation}
\newtheorem{defi}[thm]{Definition}
\newtheorem{defi+nota}[thm]{Definition and Notation}
\title{Critical 3-hypergraphs\\(detailed version)}
\author{Abderrahim Boussa\"{\i}ri\footnotemark[1] \footnotemark[4]
\and 
Brahim Chergui \footnotemark[1] \footnotemark[3]
\and 
Pierre Ille\footnotemark[2] \footnotemark[5]
\and Mohamed Zaidi\footnotemark[1] \footnotemark[6]
}
\begin{document}

\maketitle

\footnotetext[1]{Facult\'e des Sciences A\"{\i}n Chock, 
D\'epartement de Math\'ematiques et Informatique, Km 8 route d'El Jadida, 
BP 5366 Maarif, Casablanca, Maroc}
\footnotetext[2]{Aix Marseille Univ, CNRS, Centrale Marseille, I2M, Marseille, France}
\footnotetext[4]{{\tt aboussairi@hotmail.com}}
\footnotetext[3]{{\tt cherguibrahim@gmail.com}}
\footnotetext[5]{{\tt pierre.ille@univ-amu.fr}}
\footnotetext[6]{{\tt zaidi.fsac@gmail.com}}

\begin{abstract}
Given a 3-hypergraph $H$, 
a subset $M$ of $V(H)$ is a module of $H$ if for each $e\in E(H)$ such that 
$e\cap M\neq\emptyset$ and 
$e\setminus M\neq\emptyset$, there exists $m\in M$ such that $e\cap M=\{m\}$ and for every $n\in M$, we have 
$(e\setminus\{m\})\cup\{n\}\in E(H)$. 
For example, 
$\emptyset$, $V(H)$ and $\{v\}$, where $v\in V(H)$, are modules of $H$, called trivial. 
A 3-hypergraph is prime if all its modules are trivial. 
Furthermore, a prime 3-hypergraph is critical if all its induced subhypergraphs, obtained by removing one vertex, are not prime. 
We characterize the critical 3-hypergraphs. 
\end{abstract}

\medskip

\noindent {\bf Mathematics Subject Classifications (2010):} 05C65, 05C75, 05C76. 

\medskip

\noindent {\bf Key words:} 3-hypergraph, module, prime, critical.

\section{Introduction}

Let $H$ be a 3-hypergraph. 
A tournament $T$, with the same vertex set as $H$, is a realization of $H$ if the edges of $H$ are exactly the 3-element subsets of the vertex set of $T$ that induce 3-cycles. 
In~\cite{BCIZ20}, we characterized the 3-hypergraphs that admit realizations (see \cite[Problem~1]{BILT04}). 
To obtain our characterization, we introduced a new notion of a module for hypergraphs. 
By using the modular decomposition tree, we demonstrated that a 3-hypergraph is realizable if and only if all its prime (in terms of modular decomposition) induced subhypergraphs are realizable (see \cite[Theorem~13]{BCIZ20}). 
Moreover, given a realizable 3-hypergraph $H$, we proved that $H$ is prime if and only if its realizations are prime (see \cite[Theorem~12]{BCIZ20}). 
These results lead us to study the prime and induced subhypergraphs of a 
prime 3-hypergraph. 
Precisely, consider a prime 3-hypergraph $H$. 
In \cite[Theorem~13]{BCIZ}, we proved that $H$ admits a prime induced subhypergraph obtained by removing 1 or 2 vertices. 
Similar results were obtained for prime digraphs \cite{I97}, for 
prime binary relational structures \cite{ST93}, or for prime 2-structures \cite{EHR99}. 
Our purpose is to characterize the critical 3-hypergraphs, that is, the prime 3-hypergraphs all the subhypergraphs of which, obtained by removing one vertex, are not prime. 

At present, we formalize our presentation. 
We consider only finite structures. 
A hypergraph $H$ is defined by a vertex set $V(H)$ and an edge set $E(H)$, where 
$E(H)\subseteq 2^{V(H)}\setminus\{\emptyset\}$. 
Given $k\geq 2$, a hypergraph $H$ is a {\em $k$-hypergraph} if 
\begin{equation*}
E(H)\subseteq\binom{V(H)}{k}. 
\end{equation*}
Furthermore, a hypergraph $H$ is a {\em $\{k,k+1\}$-hypergraph} if 
\begin{equation*}
E(H)\subseteq\binom{V(H)}{k}\cup\binom{V(H)}{k+1}. 
\end{equation*}
Let $H$ be a hypergraph. 
With each $W\subseteq V(H)$, we associate the {\em subhypergraph} $H[W]$ of $H$ induced by $W$, which is defined on $V(H[W])=W$ by $E(H[W])=\{e\in E(H):e\subseteq W\}$. 

\begin{defi}\label{defi_module_hyper}
Let $H$ be a hypergraph. 
A subset $M$ of $V(H)$ is a {\em module} of $H$ if for each $e\in E(H)$ such that $e\cap M\neq\emptyset$ and 
$e\setminus M\neq\emptyset$, there exists $m\in M$ such that $e\cap M=\{m\}$, and for every $n\in M$, we have 
$$(e\setminus\{m\})\cup\{n\}\in E(H).$$ 
\end{defi}

Let $H$ be a hypergraph. 
Clearly, $\emptyset$, $V(H)$ and $\{v\}$, where $v\in V(H)$, are modules of $H$, called {\em trivial modules}. 
A hypergraph $H$ is {\em indecomposable} if all its modules are trivial, otherwise it is {\em decomposable}. 
A hypergraph $H$ is {\em prime} if it is indecomposable, with $v(H)\geq 3$. 
In \cite{BCIZ}, we prove the following result (see \cite[Theorem~9]{BCIZ}). 
We need the following notation (see \cite[Notation~5]{BCIZ}). 

\begin{nota}\label{nota_p_hyper}
Let $H$ be a hypergraph. 
Given $X\subsetneq V(H)$ such that $H[X]$ is prime, consider the following
subsets of $V(H)\setminus X$ 
\begin{itemize}
\item ${\rm Ext}_H(X)$ denotes the set of $v\in V(H)\setminus X$ such that 
$H[X\cup\lbrace v\rbrace ]$ is prime; 
\item $\langle X\rangle_H$ denotes the set of $v\in V(H)\setminus X$ such that 
$X$ is a module of $H[X\cup\lbrace v\rbrace ]$; 
\item for each $y\in X$, $X_H(y)$ denotes the set of $v\in V(H)\setminus X$ such that 
$\{y,v\}$ is a module of $H[X\cup\lbrace v\rbrace ]$.
\end{itemize}
The set $\{{\rm Ext}_H(X),\langle X\rangle_H\}\cup\{X_H(y):y\in X\}$ is denoted by $p_{(H,X)}$. 
\end{nota}

\begin{thm}\label{prop_Eh-2}
Let $H$ be a 3-hypergraph. 
Consider $X\subsetneq V(H)$ such that $H[X]$ is prime. 
Set $$\underline{X}=\{y\in X:X_H(y)\neq\emptyset\}.$$
If $H$ is prime, then there exist $v,w\in (V(H)\setminus X)\cup\underline{X}$ such that 
$H-\{v,w\}$ is prime. 
\end{thm}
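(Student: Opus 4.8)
The plan is to argue by induction on $n=|V(H)\setminus X|$. If $n\le 2$, put $\{v,w\}=V(H)\setminus X$ (allowing $v=w$ when $n=1$); then $\{v,w\}\subseteq(V(H)\setminus X)\cup\underline{X}$ and $H-\{v,w\}=H[X]$ is prime, so there is nothing to do. Assume $n\ge 3$. I will repeatedly use the elementary fact, immediate from Definition~\ref{defi_module_hyper}, that a module of a hypergraph remains a module in any induced subhypergraph that contains it; equivalently, since every edge of a $3$-hypergraph has at most two vertices outside a fixed set $M$, a subset $M$ is a module of $H$ if and only if $M$ is a module of $H[M\cup F]$ for every $F\subseteq V(H)\setminus M$ with $|F|\le 2$. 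The induction step splits on whether ${\rm Ext}_H(X)=\emptyset$.

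Suppose first ${\rm Ext}_H(X)\neq\emptyset$, and pick $v_0\in{\rm Ext}_H(X)$, so $H[X_1]$ is prime for $X_1=X\cup\{v_0\}$, with $X_1\subsetneq V(H)$ and $|V(H)\setminus X_1|=n-1<n$. I would apply the induction hypothesis to $(H,X_1)$ to get $v,w\in(V(H)\setminus X_1)\cup\underline{X_1}$ with $H-\{v,w\}$ prime, and then check $(V(H)\setminus X_1)\cup\underline{X_1}\subseteq(V(H)\setminus X)\cup\underline{X}$. Indeed $V(H)\setminus X_1\subseteq V(H)\setminus X$ and $v_0\in V(H)\setminus X$, so it suffices to show $y\in\underline{X_1}\cap X$ forces $y\in\underline{X}$: for such $y$ there is $u\in V(H)\setminus X_1$ with $\{y,u\}$ a module of $H[X_1\cup\{u\}]$, and since $y,u\neq v_0$ the set $\{y,u\}$ is still a module of the induced subhypergraph $H[X\cup\{u\}]=H[(X_1\cup\{u\})\setminus\{v_0\}]$, whence $u\in X_H(y)$ and $y\in\underline{X}$. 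This disposes of the case ${\rm Ext}_H(X)\neq\emptyset$.

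Now suppose ${\rm Ext}_H(X)=\emptyset$. Since $H[X]$ is prime, any nontrivial module of $H[X\cup\{u\}]$ ($u\notin X$) must be $X$ itself or a pair $\{y,u\}$ with $y\in X$; hence each $u\in V(H)\setminus X$ lies in $\langle X\rangle_H$ or in $X_H(y)$ for some $y\in\underline{X}$. If $\underline{X}=X$ then $(V(H)\setminus X)\cup\underline{X}=V(H)$ and the conclusion is exactly \cite[Theorem~13]{BCIZ}. So assume $\underline{X}\subsetneq X$ and fix $y_0\in X$ with $X_H(y_0)=\emptyset$; the vertices of $X\setminus\underline{X}$ (containing $y_0$) are precisely the ones we are forbidden to remove. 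The target here is to produce two \emph{distinct} $v,w\in V(H)\setminus X$ with $H[X\cup\{v,w\}]$ prime: then $X'=X\cup\{v,w\}$ is a prime proper induced subhypergraph with $|V(H)\setminus X'|=n-2<n$, and by the same module-restriction argument as above $(V(H)\setminus X')\cup\underline{X'}\subseteq(V(H)\setminus X)\cup\underline{X}$, so the induction hypothesis applied to $(H,X')$ finishes. To build the pair one uses primality of $H$: the "local" module attached to each outside vertex ($X$, or $\{y,u\}$ with $y\in\underline{X}$) is not a module of $H$, and by the pair-locality observation above each such failure is already witnessed inside $X$ together with two outside vertices; a selection argument, patterned on the two-vertex extension lemma for $2$-structures but carrying along the two conditions imposed on a $3$-hypergraph module, then yields $v,w\in V(H)\setminus X$ whose addition to $X$ destroys every potential nontrivial module simultaneously.

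The main obstacle is exactly this last step. I expect it to need a case analysis according to which cell of $p_{(H,X)}$ is non-empty, respectively dominant — an "$\langle X\rangle_H$-heavy" case, an "$X_H(y)$-heavy" case, and a "spread" case — interacting with the fact that a would-be module must both avoid edges meeting it in two points and look homogeneous through single-vertex edges. In the residual configurations where no outside pair extends $X$ to a prime subhypergraph, one instead exhibits a removable pair $\{v,w\}$ with $H-\{v,w\}$ prime directly, and it is precisely there that a vertex of $\underline{X}$ rather than of $V(H)\setminus X$ must enter the pair, which is what the statement records.
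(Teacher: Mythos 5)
Your induction wrapper is fine as far as it goes: the base case $n=|V(H)\setminus X|\le 2$, the restriction fact that a module stays a module in any induced subhypergraph containing it, and the step in the case ${\rm Ext}_H(X)\neq\emptyset$ (absorb $v_0$ into $X$, note $(V(H)\setminus X_1)\cup\underline{X_1}\subseteq(V(H)\setminus X)\cup\underline{X}$) are all correct. But the proof has a genuine gap exactly where you flag it: in the case ${\rm Ext}_H(X)=\emptyset$ and $\underline{X}\subsetneq X$ you only \emph{announce} a ``selection argument, patterned on the two-vertex extension lemma for $2$-structures'' that would produce distinct $v,w\in V(H)\setminus X$ with $H[X\cup\{v,w\}]$ prime, and, failing that, a directly removable pair meeting $\underline{X}$. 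No such argument is given, and it cannot be waved through: the whole reason Theorem~\ref{prop_Eh-2} is stated with the set $(V(H)\setminus X)\cup\underline{X}$ rather than $V(H)\setminus X$ is that the Ehrenfeucht--Rozenberg-type two-vertex extension \emph{fails} for this hypergraph module notion in some configurations, and describing precisely those ``residual configurations'' and exhibiting the removable pair there (with one vertex in $\underline{X}$) is the entire content of the theorem. Your sketch of ``an $\langle X\rangle_H$-heavy case, an $X_H(y)$-heavy case, and a spread case'' is a plan, not a proof; as written, the crucial implication is missing. (Note also that this paper does not prove the statement at all; it imports it from \cite[Theorem~9]{BCIZ}, so the analysis of $p_{(H,X)}$ you would need lives there.)

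A secondary problem is the subcase $\underline{X}=X$, which you settle by citing \cite[Theorem~13]{BCIZ}. In the present paper that statement (the existence of $v,w\in V(H)$ with $H-\{v,w\}$ prime, Corollary~\ref{thm1_prime}) is \emph{deduced from} Theorem~\ref{prop_Eh-2}, and the same is to be expected in \cite{BCIZ}; invoking it to prove a case of Theorem~\ref{prop_Eh-2} is therefore circular unless you supply an independent proof, which again would have to contain the same hard combinatorial analysis you have postponed. So the reductions you carry out are sound, but the theorem's core---the analysis of the blocks $\langle X\rangle_H$ and $X_H(y)$ of $p_{(H,X)}$ (Notation~\ref{nota_p_hyper}, Lemma~\ref{fact1_same_block}) showing how a prime $H$ forces either a prime two-vertex extension of $X$ or a prime $H-\{v,w\}$ with $v$ or $w$ in $\underline{X}$---remains unproved.
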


The next result follows from Theorem~\ref{prop_Eh-2} (see \cite[Corollary~10]{BCIZ})

\begin{cor}\label{thm1_prime}
Let $H$ be a prime 3-hypergraph. 
If $v(H)\geq 4$, then there exist $v,w\in V(H)$ such that $H-\{v,w\}$ is prime. 
\end{cor}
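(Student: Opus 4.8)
The plan is simply to apply Theorem~\ref{prop_Eh-2} to a well-chosen prime induced subhypergraph. Since that theorem already produces vertices $v,w\in(V(H)\setminus X)\cup\underline{X}\subseteq V(H)$ with $H-\{v,w\}$ prime, the entire task reduces to exhibiting one set $X\subsetneq V(H)$ with $H[X]$ prime. The hypothesis $v(H)\geq 4$ is there precisely to leave room for such a proper $X$.

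First I would note that $E(H)\neq\emptyset$: if $H$ had no edge, then by Definition~\ref{defi_module_hyper} every subset of $V(H)$ would be (vacuously) a module, and since $v(H)\geq 3$ any two-element subset would be a non-trivial module, contradicting the primality of $H$. So I may fix an edge $e=\{a,b,c\}\in E(H)$.

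Next I would verify that $H[e]$ is prime and that $e\subsetneq V(H)$. The latter is immediate from $|e|=3<4\leq v(H)$. For the former: since $H$ is a $3$-hypergraph, the only edge that can be contained in the three-element set $e$ is $e$ itself, so $E(H[e])=\{e\}$. Apart from the trivial modules, the only candidates to be a module of $H[e]$ are the two-element subsets $M\subseteq e$; but for such an $M$ the edge $e$ meets both $M$ and $e\setminus M$ while $|e\cap M|=2$, so the defining condition of a module fails. Hence $H[e]$ is indecomposable with exactly $3$ vertices, i.e.\ prime.

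Finally I would invoke Theorem~\ref{prop_Eh-2} with $X=e$ (legitimate because $H$ is prime, $H[e]$ is prime, and $e\subsetneq V(H)$): it yields $v,w\in(V(H)\setminus e)\cup\underline{e}$ with $H-\{v,w\}$ prime, which, since $(V(H)\setminus e)\cup\underline{e}\subseteq V(H)$, is exactly the assertion of Corollary~\ref{thm1_prime}. I do not expect a genuine obstacle here; the one step worth stating with some care is the observation that the single-edge $3$-hypergraph on three vertices is prime, since this is what makes a three-element $X$ available and thereby lets Theorem~\ref{prop_Eh-2} do all the work.
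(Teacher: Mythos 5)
Your proof is correct and follows exactly the route the paper intends: the corollary is stated as a direct consequence of Theorem~\ref{prop_Eh-2}, and your verification that a prime $3$-hypergraph has an edge $e$, that $H[e]$ is the prime single-edge hypergraph on three vertices, and that $e\subsetneq V(H)$ when $v(H)\geq 4$ supplies precisely the set $X$ that theorem needs.
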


Lastly, a prime hypergraph $H$ is {\em critical} if $H-v$ is decomposable for each $v\in V(H)$. 
Our purpose is to characterize the critical 3-hypergraphs. 

Corollary~\ref{thm1_prime} leads Ille~\cite{I93} to introduce the following auxiliary graph. 

\begin{defi}\label{defi_primality_graph}
Let $H$ be a prime 3-hypergraph with $v(H)\geq 5$. 
The {\em primality graph} $\mathscr{P}(H)$ associated with $H$ is the graph defined on $V(H)$ as follows. 
Given distinct $v,w\in V(H)$, $vw\in E(\mathscr{P}(H))$ if $H-\{v,w\}$ is prime. 
When $H$ is critical, it follows from Corollary~\ref{thm1_prime} that $\mathscr{P}(H)$ is nonempty. 
\end{defi}

\subsection{Critical and realizable 3-hypergraphs}

Let $T$ be a tournament. 
A subset $M$ of $V(T)$ is a {\em module} \cite{S92} of $T$ provided that for any $x,y\in M$ and $v\in V(T)$, 
if $xv,vy\in A(T)$, then $v\in M$. 
Note that the notions of a module and of a convex subset \cite{HW96} coincide for tournaments. 
Moreover, note that the notions of a module and of an interval coincide for linear orders. 
Given a tournament $T$, $\emptyset$, $V(T)$ and $\{v\}$, where $v\in V(T)$, are modules of $T$, called {\em trivial modules}. 
A tournament is {\em indecomposable} if all its modules are trivial, otherwise it is {\em decomposable}. 
A tournament $T$ is {\em prime} if it is indecomposable, with $v(T)\geq 3$.  
Lastly, a prime tournament $T$ is {\em critical} if $T-v$ is decomposable for each $v\in V(T)$. 
Schmerl and Trotter \cite{ST93} characterized the
critical tournaments. 
They obtained the tournaments $T_{2n+1}$, $U_{2n+1}$ and $W_{2n+1}$
defined on $\{0,\ldots, 2n\}$, where $n\geq 1$, as follows. 
\begin{itemize}
\item The tournament $T_{2n+1}$ is obtained from $L_{2n+1}$ by reversing all the arcs between even and odd vertices. 
\item The tournament $U_{2n+1}$ is obtained from $L_{2n+1}$ by reversing all the arcs between even vertices. 
\item The tournament $W_{2n+1}$ is obtained from $L_{2n+1}$ by reversing all the arcs between
$2n$ and the even elements of $\{0,\ldots,2n-1\}$. 
\end{itemize}

\begin{thm}[Schmerl and Trotter \cite{ST93}]\label{thm _Critical_ST}
Given a tournament $\tau$, with $v(\tau)\geq 5$, 
$\tau$ is critical if and only if $v(\tau)$ is odd and $\tau$ is isomorphic to 
$T_{v(\tau)}$, $U_{v(\tau)}$, or $W_{v(\tau)}$. 
\end{thm}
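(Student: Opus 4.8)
The plan is to establish the two implications separately: the ``if'' direction by a direct verification that each of the three families is critical, and the ``only if'' direction by induction on $v(\tau)$, peeling off two vertices to reach a smaller prime subtournament and keeping track of how the peeled vertices attach to it.

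For sufficiency, I would fix $n\ge 2$ and one of $T_{2n+1}$, $U_{2n+1}$, $W_{2n+1}$, call it $T$. To see that $T$ is prime, I would show that every subset $M$ of $V(T)$ with $2\le |M|\le 2n$ is \emph{split}, i.e.\ there is $c\in V(T)\setminus M$ and $a,b\in M$ with $a\to c$ and $c\to b$ in $T$. Since $T$ is obtained from $L_{2n+1}$ by reversing the arcs between two prescribed parity classes, a nontrivial module of $T$ would have to be an interval of $L_{2n+1}$ that is in addition invariant under those reversals, and a short case analysis on parities shows that no such interval exists. To see that $T$ is critical, I would exhibit, for each vertex $v$, an explicit $2$-element module of $T-v$: deleting $v$ destroys exactly one ``alternation'' and thereby identifies two consecutive vertices of $L_{2n+1}$, which form a module. (The tournament analogue of the primality graph of Definition~\ref{defi_primality_graph} is a convenient device for organizing this part.)

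For necessity, I would argue by induction on $n=v(\tau)\ge 5$. The base case $n=5$ reduces to enumerating the prime tournaments on five vertices: up to isomorphism there are exactly three, namely $T_5$, $U_5$ and $W_5$, and each is automatically critical since there is no prime tournament on four vertices. For the inductive step, assume $n\ge 6$ and that the statement holds for smaller orders. As $\tau$ is critical it has no prime subtournament on $n-1$ vertices, so the extension phenomenon for prime tournaments (the tournament counterpart of \cite[Theorem~13]{BCIZ}, see \cite{ST93}; cf.\ also Corollary~\ref{thm1_prime}) yields a prime subtournament on $n-2$ vertices. For $n=6$ this would be a prime tournament on four vertices, which does not exist, so $n\ge 7$, and I fix $X\subsetneq V(\tau)$ with $\tau[X]$ prime and $|X|=n-2\ge 5$, writing $V(\tau)\setminus X=\{v,w\}$. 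Since $\tau-w=\tau[X\cup\{v\}]$ and $\tau-v=\tau[X\cup\{w\}]$ are decomposable while $\tau[X]$ is prime, the partition of $V(\tau)\setminus X$ underlying Notation~\ref{nota_p_hyper}, read for tournaments, forces each of $v$ and $w$ either to have $X$ as a module of $\tau[X\cup\{v\}]$, respectively $\tau[X\cup\{w\}]$ (i.e.\ to lie in $\langle X\rangle_\tau$), or to form a $2$-element module with a unique vertex of $X$. I would then use the decomposability of $\tau-y$ for every $y\in X$, together with the inductive description of a suitable prime subtournament of $\tau$ on fewer vertices, to determine these attachments completely and conclude that $\tau\cong T_n$, $U_n$ or $W_n$; in particular $n$ is forced to be odd.

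I expect the inductive step to be where the difficulty lies, and for a structural reason: a prime subtournament of a critical tournament need not itself be critical, so one cannot run the induction directly on ``critical tournament''. The remedy — and the technical core of the argument — is to strengthen the statement to a relative notion (a tournament that is critical over, or with respect to, a fixed prime subtournament, in the spirit of \cite{ST93} and \cite{I93}), to prove this strengthening by induction, and then to carry out the lengthy but elementary case analysis of how $v$ and $w$ sit over $X$, tracking throughout the parity information that excludes even orders.
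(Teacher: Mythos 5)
This statement is not proved in the paper at all: Theorem~\ref{thm _Critical_ST} is imported verbatim from Schmerl and Trotter \cite{ST93}, so there is no internal argument to compare yours against. Judged on its own, your outline does follow the broad strategy of the original proof (and of its later reworking via the primality graph in \cite{I93,BI09}): verify directly that the three families are critical, then descend by two vertices and classify the attachments. But in both directions the decisive content is missing. In the sufficiency part, your reduction ``a nontrivial module of $T$ would have to be an interval of $L_{2n+1}$ invariant under the reversals'' is asserted, not proved, and it is not true as stated: a module of the reversed tournament need not be an interval of the underlying linear order (reversing arcs between parity classes can make a non-interval uniform from outside), so you must instead show directly that every $M$ with $2\le|M|\le 2n$ is split by some external vertex; likewise the two-element modules of $T-v$ witnessing criticality should be exhibited explicitly for each family and each position of $v$, since they differ at the endpoints and in the interior.

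In the necessity part the gap is more serious. You correctly identify that the induction cannot be run naively on ``critical,'' but the remedy you point to --- either proving that a critical tournament on $n$ vertices contains a \emph{critical} subtournament on $n-2$ vertices, or formulating and proving the relative (``partially critical'') strengthening --- is precisely the technical core of Schmerl--Trotter's argument, and you leave it entirely unexecuted, together with the case analysis of how $v$ and $w$ sit over $X$ (whether they lie in $\langle X\rangle_\tau$ or in some $X_\tau(y)$, how the resulting modules of $\tau-v$ and $\tau-w$ interact, and why this forces $n$ odd and $\tau\cong T_n$, $U_n$ or $W_n$). The base case also needs an actual enumeration: the claim that $T_5$, $U_5$, $W_5$ are, up to isomorphism, the only prime tournaments on five vertices is true but must be checked against the twelve isomorphism types of $5$-vertex tournaments. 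As it stands the proposal is a faithful roadmap to the known proof rather than a proof.
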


A realization of a 3-hypergraph is defined as follows (see ~\cite[Definition~10]{BCIZ20}). 
To begin, we associate with each tournament a 3-hypergraph in the following way 
(see ~\cite[Definition~9]{BCIZ20}). 

\begin{defi}\label{C_3}
The {\em $3$-cycle} is the tournament $C_3=(\{0,1,2\},\{01,12,20\})$. 
Given a tournament $T$, the {\em $C_3$-structure} of $T$ is the 3-hypergraph $C_3(T)$ defined on 
$V(C_3(T))=V(T)$ by 
$$E(C_3(T))=\{X\subseteq V(T):T[X]\ \text{is isomorphic to}\ C_3\}.$$
\end{defi}

\begin{defi}
Given a 3-uniform hypergraph $H$, a tournament $T$, with $V(T)=V(H)$, {\em realizes} $H$ if $H=C_3(T)$. 
We say also that $T$ is a {\em realization} of $H$. 
\end{defi}

In~\cite{BCIZ20}, we proved the following result (see  \cite[Theorem~12]{BCIZ20})

\begin{thm}\label{thm_same_primality}
Consider a realizable and 3-hypergraph $H$. 
For a realization $T$ of $H$, we have $H$ is prime if and only if $T$ is prime.
\end{thm}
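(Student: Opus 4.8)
The plan is to prove the equivalence by comparing the modules of $T$ with those of $H=C_3(T)$. Since $v(H)=v(T)$, the cardinality condition $v\ge 3$ occurring in the definition of "prime" is the same on both sides, so we may assume $v(T)\ge 3$, the equivalence being trivial otherwise. For the implication "$H$ prime $\Rightarrow$ $T$ prime" I would argue contrapositively via the easy fact that \emph{every module of $T$ is a module of $H$}. The crucial elementary observation is that a $3$-cycle of $T$ meets a module $M$ of $T$ in at most one vertex: if two vertices of $M$ lay on a common $3$-cycle with some $v$, then for a suitable labelling that cycle reads $b\to v\to a$ with $a,b\in M$, forcing $v\in M$ by the definition of a module of a tournament. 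Hence, for an edge $e$ of $H$ with $e\cap M\ne\emptyset\ne e\setminus M$ we automatically get $e\cap M=\{m\}$; and since $m$ and any $n\in M$ have the same behaviour towards the two vertices of $e\setminus M$ (again by the module property of $T$), the tournament induced on $(e\setminus\{m\})\cup\{n\}$ is still a $3$-cycle, i.e.\ $(e\setminus\{m\})\cup\{n\}\in E(H)$. So a non-trivial module of $T$ is a non-trivial module of $H$, and $T$ decomposable forces $H$ decomposable.

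For the substantial implication "$T$ prime $\Rightarrow$ $H$ prime" I would again work contrapositively: from a non-trivial module $M$ of $H$ I want to manufacture a non-trivial module of $T$. If $M$ is already a module of $T$ we are done; otherwise some external vertex \emph{splits} $M$, meaning $m\to c\to m'$ for some $m,m'\in M$. Let $\mathcal V$ be the non-empty set of splitting vertices, and for $c\in\mathcal V$ partition $M=B_c\sqcup A_c$ according to whether a vertex of $M$ dominates $c$ or is dominated by $c$ (both parts non-empty). The engine of the argument is an "all-or-nothing" principle coming from $M$ being a module of $H$: for any two external vertices $x,y$, either all triples $\{n,x,y\}$ with $n\in M$ induce $3$-cycles of $T$, or none does. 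Feeding suitable choices of $x,y$ into this principle, I would extract the local structure: $B_c\to A_c$ (a failure would place a $3$-cycle of $T$ through two vertices of $M$); every non-splitting external vertex dominates all of $M$ or is dominated by all of $M$, giving a partition $V(T)\setminus M=S\sqcup\mathcal V\sqcup K$; moreover $S\to\mathcal V$ and $\mathcal V\to K$; and for $c,c'\in\mathcal V$ with $c\to c'$ one gets $B_c\subseteq B_{c'}$, so that $\{B_c\}_{c\in\mathcal V}$ is a chain for inclusion.

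To finish, I would group $\mathcal V$ by the value of $B_c$, writing $\emptyset\subsetneq B^{(1)}\subsetneq\dots\subsetneq B^{(q)}\subsetneq M$ with corresponding groups $G_1,\dots,G_q$ (and noting $G_i\to G_j$ whenever $i<j$), and then claim that $X=B^{(1)}\cup G_1$ is a non-trivial module of $T$: each vertex of $S$ dominates all of $X$; each vertex of $K$ and each vertex of $M\setminus B^{(1)}$ is dominated by all of $X$ (using $B^{(1)}\to(M\setminus B^{(1)})$ and $G_1\to(M\setminus B^{(1)})$); and each vertex of $G_2\cup\dots\cup G_q$ is dominated by all of $X$ (using $B^{(1)}\subseteq B_{c'}$ together with $G_1\to G_j$). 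As $|X|\ge 2$ and $X$ misses the non-empty set $M\setminus B^{(1)}$, this contradicts the primality of $T$; the case $q=1$ — all splitters inducing the same partition, which in particular always occurs when $|M|=2$ — is identical with $X=B^{(1)}\cup\mathcal V$. I expect the main obstacle to be exactly this last step when several vertices split $M$: one must see that their partitions of $M$ are nested \emph{and} that the splitting vertices themselves are linearly ordered compatibly with that nesting, so that the innermost block $B^{(1)}\cup G_1$ is seen uniformly by every other vertex. The bookkeeping is routine once the chain property is isolated, but reaching it requires applying the module condition of $H$ to the straddling triples $\{c,c',n\}$, $\{c,s,n\}$ and $\{c,k,n\}$, not merely to triples lying inside $M$.
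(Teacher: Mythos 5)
Your argument is correct. Note first that the paper does not prove Theorem~\ref{thm_same_primality} at all: it is imported verbatim from \cite[Theorem~12]{BCIZ20}, so there is no internal proof to compare against; your proposal is a self-contained substitute. The easy direction is exactly as you say: a module $M$ of $T$ meets every $3$-cycle not contained in $M$ in at most one vertex (two vertices of $M$ on a common $3$-cycle with an outside vertex $v$ would force $v\in M$), and the uniform behaviour of $M$ towards the two outside vertices preserves the $3$-cycle property, so nontrivial modules of $T$ are nontrivial modules of $C_3(T)$. For the converse, the two pillars you isolate are the right ones and both check out: (i) no edge of $H$ meets a module $M$ of $H$ in exactly two vertices, which yields $B_c\to A_c$; and (ii) the all-or-nothing principle for straddling triples $\{n,x,y\}$, $n\in M$, $x,y\notin M$, which, applied to $\{n,c,c'\}$, $\{n,c,s\}$, $\{n,c,k\}$, yields $B_c\subseteq B_{c'}$ when $c\to c'$ (a violation would make $\{b,c,c'\}$ a $3$-cycle while $\{a,c,c'\}$ has $c$ of out-degree $2$), $S\to\mathcal V$ and $\mathcal V\to K$. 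Since the $B_c$ are then totally ordered by inclusion and the groups $G_i$ are linearly ordered compatibly, every vertex outside $X=B^{(1)}\cup G_1$ sees $X$ uniformly, and $2\le|X|<v(T)$ because $A_c=M\setminus B^{(1)}\neq\emptyset$ is avoided; this contradicts the primality of $T$. The only point worth making explicit in a written-up version is the verification that $\{a,c,c'\}$ (resp.\ $\{a,c,s\}$, $\{b,c,k\}$) fails to be a $3$-cycle because one vertex has out-degree or in-degree $2$, which is what makes the all-or-nothing principle bite; you use this correctly throughout.
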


The next result follows from Theorems~\ref{thm _Critical_ST} and \ref{thm_same_primality} (see \cite[Theorem~49]{BCIZ20}). 

\begin{cor}\label{cor_reali_critical}
Given a realizable 3-hypergraph $H$, 
$H$ is critical if and only if $v(H)$ is odd and $H$ is isomorphic to $C_3(T_{v(H)})$, $C_3(T_{v(H)})$, or 
$C_3(T_{v(H)})$. 
\end{cor}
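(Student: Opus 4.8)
The plan is to reduce the statement to the two quoted theorems: I will show that, for a realizable 3-hypergraph $H$ with a realization $T$, the hypergraph $H$ is critical if and only if the tournament $T$ is critical, and then read off the classification from Theorem~\ref{thm _Critical_ST}, using that $v(T)=v(H)$.

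First I would record the routine fact that forming the $C_3$-structure commutes with passing to induced substructures: if $H=C_3(T)$ and $W\subseteq V(T)$, then the edges of $H[W]$ are precisely the $3$-element subsets $X\subseteq W$ with $T[X]$ isomorphic to $C_3$, so $H[W]=C_3(T[W])$; in particular $H-v=C_3(T-v)$ for every $v\in V(H)$, and each $H-v$ is again realizable, with realization $T-v$. Combining this with Theorem~\ref{thm_same_primality} applied to $H$ (realization $T$) and to each $H-v$ (realization $T-v$) yields: $H$ is prime iff $T$ is prime, and, for every $v$, $H-v$ is prime iff $T-v$ is prime, the latter equivalence holding trivially when $H-v$ has fewer than three vertices since then neither side holds. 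Since a structure on at least three vertices is decomposable precisely when it is not prime, it follows (at least for $v(H)\geq 5$, the range of Theorem~\ref{thm _Critical_ST}) that $H$ is critical if and only if $T$ is critical.

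It then remains to assemble the two directions. If $H$ is critical, then $T$ is a critical tournament on $v(H)$ vertices, so by Theorem~\ref{thm _Critical_ST} the integer $v(H)$ is odd and $T$ is isomorphic to $T_{v(H)}$, $U_{v(H)}$, or $W_{v(H)}$; since $T\cong T'$ implies $C_3(T)\cong C_3(T')$, the hypergraph $H=C_3(T)$ is isomorphic to $C_3(T_{v(H)})$, $C_3(U_{v(H)})$, or $C_3(W_{v(H)})$. Conversely, each of $T_{2n+1}$, $U_{2n+1}$, $W_{2n+1}$ is critical by Theorem~\ref{thm _Critical_ST}, each of $C_3(T_{2n+1})$, $C_3(U_{2n+1})$, $C_3(W_{2n+1})$ is realizable by construction, and hence critical by the equivalence just established, so any 3-hypergraph isomorphic to one of them is critical. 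I expect the only delicate point to be the bookkeeping at small orders --- reconciling ``$H-v$ decomposable'' with ``$H-v$ not prime'' and ensuring the hypothesis of Theorem~\ref{thm_same_primality} is met --- but this is harmless once attention is restricted to $v(H)\geq 5$; no further analysis of the tournaments $T_n,U_n,W_n$ or of their $C_3$-structures is needed, the substance being entirely carried by Theorems~\ref{thm _Critical_ST} and~\ref{thm_same_primality}.
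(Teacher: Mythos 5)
Your proposal is correct and follows exactly the route the paper intends: the paper offers no proof of this corollary beyond the remark that it ``follows from Theorems~\ref{thm _Critical_ST} and \ref{thm_same_primality}'', and your argument --- observing that $C_3(T[W])=C_3(T)[W]$, deducing that criticality of $H$ and of its realization $T$ are equivalent via Theorem~\ref{thm_same_primality} applied to $H$ and to each $H-v$, and then invoking Schmerl--Trotter --- is precisely the fleshed-out version of that remark. Your restriction to $v(H)\geq 5$ is also the right call, since the equivalence (and indeed the literal statement of the corollary) fails at $v(H)=3$.
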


\begin{defi}\label{defi_reali_critical}
Given a critical 3-hypergraph $H$, we say that $H$ is {\em circular} if $v(H)$ is odd and 
$H$ is isomorphic to $C_3(T_{v(H)})$. 
\end{defi}

\subsection{A construction of 3-hypergraphs}

The construction is done from the set of the components of a graph. 
We use the following notation. 

\begin{nota}\label{nota_comp}
Consider a graph $\Gamma$. 
The set of the components of $\Gamma$ is denoted by $\mathfrak{C}(\Gamma)$. 
Furthermore, set 
$\mathfrak{C}_{{\rm even}}(\Gamma)=\{C\in\mathfrak{C}(\Gamma):v(C)\equiv 0 \mod 2\}$ and 
$\mathfrak{C}_{{\rm odd}}(\Gamma)=\{C\in\mathfrak{C}(\Gamma):v(C)\equiv 1 \mod 2\}$. 
Lastly, set 
$\mathfrak{C}_1(\Gamma)=\{C\in\mathfrak{C}(\Gamma):v(C)=1\}$ and 
$V_1(\Gamma)=\{v\in V(\Gamma):\{v\}\in\mathfrak{C}_1(\Gamma)\}$. 
For each $C\in\mathfrak{C}(\Gamma)$, set 
\begin{equation*}
w(C)=\ 
\begin{cases}
\text{$v(C)/2$ if $C\in\mathfrak{C}_{{\rm even}}(\Gamma)$}\\
\text{or}\\
\text{$(v(C)-1)/2$ if $C\in\mathfrak{C}_{{\rm odd}}(\Gamma)$}.
\end{cases}
\end{equation*}
\end{nota}

We consider a graph $\Gamma$, 
\begin{equation}\label{critical_construction_0}
\text{all the components of which are paths.} 
\end{equation}
For each $n\geq 1$, recall that the {\em path} $P_n$ is defined on $V(P_n)=\{0,\ldots,n-1\}$ by 
\begin{equation*}
E(P_n)=\ 
\begin{cases}
\text{$\emptyset$ if $n=1$,}\\
\text{$\{k(k+1):0\leq k\leq n-2\}$ if $n\geq 2$.}
\end{cases}
\end{equation*}
We suppose that 
\begin{align}\label{critical_construction_1}
\filledstar\ &\mathfrak{C}(\Gamma)\setminus\mathfrak{C}_1(\Gamma)\neq\emptyset,&\nonumber\\
\filledstar\ &\text{for each $C\in\mathfrak{C}_{{\rm odd}}(\Gamma)$, if $V(H)\setminus V(C)\neq\emptyset$, then}\nonumber\\
&\hspace{40mm}\text{
$(\mathfrak{C}(\Gamma)\setminus\mathfrak{C}_1(\Gamma))\setminus\{C\}\neq\emptyset$,}\\
\filledstar\ &\text{for each $C\in\mathfrak{C}_{{\rm even}}(\Gamma)$, 
if $|V(H)\setminus V(C)|\geq 2$, then}\nonumber\\
&\hspace{40mm}\text{
$(\mathfrak{C}(\Gamma)\setminus\mathfrak{C}_1(\Gamma))\setminus\{C\}\neq\emptyset$}.\nonumber
\end{align}
Consider also a $\{2,3\}$-hypergraph $\mathbb{H}$ defined on $\mathfrak{C}(\Gamma)$ satisfying 
\begin{align}\label{critical_construction_2}
\filledsquare\ &\text{for each $\varepsilon\in E(\mathbb{H})$, $|\varepsilon|=2$ or $3$},\nonumber\\
\filledsquare\ &\text{for each $\varepsilon\in E(\mathbb{H})$, if $|\varepsilon|=2$, then 
$\varepsilon\cap\mathfrak{C}_{{\rm even}}(\Gamma)\neq\emptyset$ and 
$\varepsilon\cap\mathfrak{C}_{{\rm odd}}(\Gamma)\neq\emptyset$,}\\
\filledsquare\ &\text{for each $\varepsilon\in E(\mathbb{H})$, if $|\varepsilon|=3$, then 
$\varepsilon\subseteq\mathfrak{C}_{{\rm odd}}(\Gamma)$.\nonumber}
\end{align}
With $\Gamma$ and $\mathbb{H}$, we associate the 3-hypergraph 
$\Gamma\bullet\mathbb{H}$ defined on $V(\Gamma)$ in the following manner. 
For each $C\in\mathfrak{C}(\Gamma)$, 
we consider an isomorphism $\varphi_C$ from the path $P_{v(C)}$ onto $C$. 
With each $C\in\mathfrak{C}_{{\rm odd}}(\Gamma)\setminus\mathfrak{C}_{1}(\Gamma)$, 
we associate the set 
$$E_C=\varphi_C(E(C_3(U_{v(C)}))).$$
Moreover, consider $\varepsilon\in E(\mathbb{H})$ such that $|\varepsilon|=2$. 
There exist $C\in\mathfrak{C}_{{\rm even}}(\Gamma)$ and 
$D\in\mathfrak{C}_{{\rm odd}}(\Gamma)$ such that $e=CD$. 
Associate with $\varepsilon$ the set 
$$E_\varepsilon=\{\varphi_C(2i)\varphi_C(2j+1)\varphi_D(2k):\ 0\leq i\leq j\leq w(C)-1,
 \ 0\leq k\leq w(D)\}.$$
Lastly, consider $\varepsilon\in E(\mathbb{H})$ such that $|\varepsilon|=3$. 
There exist $I,J,K\in\mathfrak{C}_{{\rm odd}}(\Gamma)$ such that $\varepsilon=IJK$. 
Associate with $\varepsilon$ the set 
$$E_\varepsilon=
\{\varphi_{I}(2i)\varphi_{J}(2j)\varphi_{K}(2k):0\leq i\leq w(I),
\ 0\leq j\leq w(J),
\ 0\leq k\leq w(K)\}.$$
The 3-hypergraph $\Gamma\bullet\mathbb{H}$ is defined on $V(\Gamma)$ by 
\begin{equation}\label{E1_Rbis_Critical_hyper}
E(\Gamma\bullet\mathbb{H})=(\bigcup_{C\in\mathfrak{C}_{{\rm odd}}(\Gamma)\setminus\mathfrak{C}_{1}(\Gamma)}E_C)\cup
(\bigcup_{\varepsilon\in E(\mathbb{H})}E_\varepsilon).
\end{equation}

\subsection{The main results}

To begin, we prove the following theorem. 

\begin{thm}\label{thm_1_main}
Let $H$ be a critical and 3-hypergraph such that $v(H)\geq 5$. 
If $H$ is not circular, then there exist a graph $\Gamma$ satisfying \eqref{critical_construction_0} and \eqref{critical_construction_1}, and a 
$\{2,3\}$-hypergraph $\mathbb{H}$ defined on $\mathfrak{C}(\Gamma)$ satisfying 
\eqref{critical_construction_1} such that $H=\Gamma\bullet\mathbb{H}$.
\end{thm}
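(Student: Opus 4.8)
The plan is to analyze the primality graph $\mathscr{P}(H)$ of a non-circular critical 3-hypergraph $H$ and show it is a disjoint union of paths, then read off the graph $\Gamma$ and the auxiliary hypergraph $\mathbb{H}$ from the combinatorial data attached to $H$. First I would invoke Corollary~\ref{thm1_prime} to ensure $\mathscr{P}(H)$ is nonempty, and then establish structural constraints on $\mathscr{P}(H)$ coming from criticality: for each edge $vw\in E(\mathscr{P}(H))$, the subhypergraph $H-\{v,w\}$ is prime, and since $H$ is critical, $H-v$ and $H-w$ are both decomposable. The interplay between these facts — a prime hypergraph on $v(H)-2$ vertices sitting inside two decomposable hypergraphs on $v(H)-1$ vertices, both inside the prime $H$ — should force $\mathscr{P}(H)$ to have maximum degree $2$, hence to be a union of paths and cycles. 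Ruling out cycles (or rather, showing that a cyclic component would force $H$ to be circular, i.e.\ isomorphic to $C_3(T_{v(H)})$ via Corollary~\ref{cor_reali_critical} and Definition~\ref{defi_reali_critical}) is where the hypothesis ``$H$ is not circular'' gets used; this gives a graph $\Gamma$ whose components are paths, establishing \eqref{critical_construction_0}.

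Next I would identify $\Gamma$ concretely: its vertex set is $V(H)$, and its edges are exactly the pairs $vw$ with $H-\{v,w\}$ prime, i.e.\ $\Gamma=\mathscr{P}(H)$. The components of $\Gamma$ of size $\geq 3$ should correspond to ``locally circular'' pieces of $H$ — more precisely, for an odd path component $C$ of size $\geq 3$, the induced subhypergraph $H[V(C)]$ (suitably interpreted) should be isomorphic to $C_3(U_{v(C)})$, which is precisely what the definition of $E_C$ in the construction encodes via the isomorphism $\varphi_C$. This requires a careful local analysis: one takes a minimal prime induced subhypergraph, applies the $(-2)$-reconstruction results (Theorem~\ref{prop_Eh-2}) iteratively, and uses the Schmerl--Trotter classification (Theorem~\ref{thm _Critical_ST}) together with Theorem~\ref{thm_same_primality} on the realizable pieces to pin down the isomorphism type. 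The conditions \eqref{critical_construction_1} on $\Gamma$ (the singleton/odd/even component constraints) must then be verified: they are exactly the obstructions that would otherwise let one delete a single vertex and stay prime, contradicting criticality, so each is proved by exhibiting the decomposition of $H-v$ that criticality guarantees and tracing which component structure is incompatible with it.

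Then I would construct $\mathbb{H}$ on $\mathfrak{C}(\Gamma)$: its edges record how the path components of $\Gamma$ are ``glued'' inside $H$. For two components $C$ (even) and $D$ (odd) that interact, the edge-set of $H$ restricted to $V(C)\cup V(D)$ should match $E_\varepsilon$ with $\varepsilon=CD$ — a $2$-element edge of $\mathbb{H}$; for three odd components $I,J,K$ whose mutual interaction is nontrivial, one gets a $3$-element edge $IJK\subseteq\mathfrak{C}_{\rm odd}(\Gamma)$. The parity conditions in \eqref{critical_construction_2} ($2$-edges straddle the even/odd divide, $3$-edges live entirely among odd components) should again fall out of criticality: a $2$-edge between two odd components, or one incident to a singleton, would again produce a removable vertex. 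The bulk of the work is the verification that $E(H)$ equals $(\bigcup_C E_C)\cup(\bigcup_\varepsilon E_\varepsilon)$ exactly — both inclusions. For ``$\supseteq$'' one checks each listed triple is a 3-cycle-type edge using the explicit description of $C_3(U_m)$; for ``$\subseteq$'' one takes an arbitrary $e\in E(H)$, locates the component(s) of $\Gamma$ meeting $e$, and uses the module conditions forced by primality/criticality to conclude $e$ has the prescribed form.

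\textbf{Main obstacle.} I expect the hardest step to be the local identification: proving that each path component $C$ of $\Gamma$ together with the edges of $H$ internal to $V(C)$ is \emph{forced} to look like $\varphi_C(E(C_3(U_{v(C)})))$, and symmetrically that the cross-component edges are forced into the $E_\varepsilon$ shape. This is essentially a rigidity statement: the only way a prime 3-hypergraph can be ``critical in the direction of a given path'' is the one exhibited by the $U_{2n+1}$ tournaments. Handling the interaction patterns between components — showing the interaction is governed by a clean $\{2,3\}$-hypergraph and that no other gluings occur — will require a somewhat delicate case analysis on the sizes and parities of the components, and this is where most of the technical length of the proof will reside.
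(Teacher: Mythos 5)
Your overall architecture coincides with the paper's: take $\Gamma=\mathscr{P}(H)$, use the degree bound of Lemma~\ref{Lcritical} to see that the components are paths or cycles, exclude cycles using non-circularity, identify the local structure on each component, and encode the cross-component interactions in a $\{2,3\}$-hypergraph on $\mathfrak{C}(\Gamma)$. However, there is a genuine gap in the mechanism you propose for the two pivotal identification steps. To exclude a cycle component you appeal to Corollary~\ref{cor_reali_critical}, and to identify $H[V(C)]$ with $C_3(U_{v(C)})$ you appeal to the Schmerl--Trotter classification (Theorem~\ref{thm _Critical_ST}) together with Theorem~\ref{thm_same_primality} ``on the realizable pieces''; but both of these results presuppose realizability by a tournament, and nothing in the hypotheses tells you that $H$, or the subhypergraph induced by a component of $\mathscr{P}(H)$, is realizable. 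Realizability of these pieces is exactly a consequence of the structure you are trying to establish, so the argument as sketched begs the question at precisely the step you yourself single out as the main obstacle.

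What fills this gap in the paper is a direct combinatorial derivation from the module conditions supplied by Lemma~\ref{Lcritical}. Proposition~\ref{Pcritical_T2n+1} shows that a cycle component must have odd length, that its vertex set is a module of $H$ (hence equals $V(H)$ by primality), and then reconstructs $E(H)$ edge by edge, using the shift $x\mapsto x+1$ as an automorphism, to conclude $H=C_3(T_{2n+1})$; this, not Corollary~\ref{cor_reali_critical}, is what makes ``cycle component $\Rightarrow$ circular'' legitimate. Likewise, Lemma~\ref{Lcritical_U2n+1_V2n+1} pins down how an edge can meet an odd or an even path component (a single vertex $\varphi_C(2i)$, respectively a pair $\varphi_C(2i),\varphi_C(2j+1)$), and Propositions~\ref{Pcritical_U2n+1} and~\ref{Pcritical_V2n+1} then force $H[V(C)]\cong C_3(U_{v(C)})$ and $H[V(C)\cup\{\varphi_D(2k)\}]\cong C_3(W_{v(C)+1})$ by the same kind of module bookkeeping; only afterwards, once these induced subhypergraphs are known to be prime, is Theorem~\ref{prop_Eh-2} applied to produce a second non-singleton component, which is what yields \eqref{critical_construction_1}. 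Your plan needs to be reworked along these lines: the rigidity statements must be derived directly from the modules that criticality provides, rather than imported from the tournament classification, since the bridge to tournaments is unavailable until the structure is already in hand.
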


In Theorem~\ref{thm_1_main}, the graph $\Gamma$ is the primality graph of $H$ 
(see Definition~\ref{defi_primality_graph}). 
To state, the second main theorem, we consider a graph $\Gamma$ satisfying \eqref{critical_construction_0} and \eqref{critical_construction_1}. 
We consider also a $\{2,3\}$-hypergraph $\mathbb{H}$ defined on $\mathfrak{C}(\Gamma)$ satisfying 
\eqref{critical_construction_2}. 
Furthermore, we use the modules of $\mathbb{H}$ defined as follows. 

\begin{defi}
Given $\mathbb{W}\subseteq V(\mathbb{H})$, $\mathbb{W}$ is a module of 
$\mathbb{H}$ if $\mathbb{W}$ is a module of 
$$(V(\mathbb{H}),E(\mathbb{H})\cap\binom{V(\mathbb{H})}{2})$$ and 
$\mathbb{W}$ is a module of 
$$(V(\mathbb{H}),E(\mathbb{H})\cap\binom{V(\mathbb{H})}{3}).$$
\end{defi}

\begin{thm}\label{thm_2_main}
Suppose that $v(\Gamma\bullet\mathbb{H})\geq 5$. 
The 3-hypergraph $\Gamma\bullet\mathbb{H}$ is critical if and only if the following three assertions hold
\begin{enumerate}
\item[(C1)] $\mathbb{H}$ is connected;
\item[(C2)] for every nontrivial module $\mathbb{M}$ of $\mathbb{H}$, we have 
$\mathbb{M}\setminus\mathfrak{C}_{1}(\Gamma)\neq\emptyset$;
\item[(C3)] for each $v\in V_1(\Gamma)$, if 
$\mathbb{H}-\{v\}$ is connected, then $\mathbb{H}-\{v\}$ admits a nontrivial module $\mathbb{M}_{\{v\}}$ such that 
$\mathbb{M}_{\{v\}}\subseteq\mathfrak{C}_1(\Gamma)\setminus\{\{v\}\}$.
\end{enumerate}
\end{thm}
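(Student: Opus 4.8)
\textbf{Proof plan for Theorem~\ref{thm_2_main}.}

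The plan is to establish the two directions separately, using the construction $\Gamma\bullet\mathbb{H}$ together with Theorem~\ref{prop_Eh-2}, Corollary~\ref{thm1_prime}, and the description of critical tournaments via $U_{2n+1}$ embedded in the $E_C$ blocks. The starting point for both directions is a careful analysis of the modules of $\Gamma\bullet\mathbb{H}$: I would first prove a structural lemma saying that, for each component $C\in\mathfrak{C}(\Gamma)$, the vertex set $V(C)$ is ``almost rigid'' inside $\Gamma\bullet\mathbb{H}$ in the sense that the only nontrivial modules contained in $V(C)$ are the two-element sets $\{\varphi_C(2i),\varphi_C(2i+1)\}$ coming from the critical tournament $U_{v(C)}$ (when $C$ is odd and non-singleton), and that a module meeting two distinct components $C,D$ must, after intersecting with $V(C)$ and $V(D)$, be a union of such blocks whose index set of components forms a module of $\mathbb{H}$. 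This ``transfer'' between modules of $\Gamma\bullet\mathbb{H}$ and modules of $\mathbb{H}$ is the technical heart of the argument and is exactly where conditions \eqref{critical_construction_1} and \eqref{critical_construction_2} on $\Gamma$ and $\mathbb{H}$ get used, since they guarantee that the adjacency pattern encoded by the $E_\varepsilon$ is ``generic enough'' that no accidental modules appear.

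For the forward direction, assume $\Gamma\bullet\mathbb{H}$ is critical. To get (C1), suppose $\mathbb{H}$ is disconnected; then the union of the vertex sets of the components of $\Gamma$ lying in one connected piece of $\mathbb{H}$ is, by the transfer lemma, a nontrivial module of $\Gamma\bullet\mathbb{H}$, contradicting primality. For (C2), given a nontrivial module $\mathbb{M}$ of $\mathbb{H}$ with $\mathbb{M}\subseteq\mathfrak{C}_1(\Gamma)$, the set $\bigcup_{\{v\}\in\mathbb{M}}\{v\}$ would be a nontrivial module of $\Gamma\bullet\mathbb{H}$ (the singleton components carry no internal edges, so the module condition reduces to the module condition in $\mathbb{H}$), again contradicting primality. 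For (C3), fix $v\in V_1(\Gamma)$ with $\mathbb{H}-\{v\}$ connected; criticality says $(\Gamma\bullet\mathbb{H})-v$ is decomposable, so it has a nontrivial module $N$; I would run the transfer lemma on $(\Gamma\bullet\mathbb{H})-v=(\Gamma-v)\bullet(\mathbb{H}-\{v\})$, note that connectedness of $\mathbb{H}-\{v\}$ kills the ``union of components'' type of module and rigidity of non-singleton components kills modules internal to one component, leaving precisely a nontrivial module $\mathbb{M}_{\{v\}}$ of $\mathbb{H}-\{v\}$; finally, since $\mathbb{H}$ itself is prime-free of such modules by (C1)--(C2) only up to adding back $\{v\}$, one checks $\mathbb{M}_{\{v\}}$ must be contained in $\mathfrak{C}_1(\Gamma)\setminus\{\{v\}\}$ (otherwise $\mathbb{M}_{\{v\}}$ or $\mathbb{M}_{\{v\}}\cup\{\{v\}\}$ violates (C2) for $\mathbb{H}$).

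For the converse, assume (C1)--(C3) and show $\Gamma\bullet\mathbb{H}$ is prime and that every one-vertex deletion is decomposable. Primality: take a nontrivial module $N$ of $\Gamma\bullet\mathbb{H}$; by the transfer lemma either $N\subseteq V(C)$ for some non-singleton $C$ — impossible unless $N$ is one of the $U_{v(C)}$-blocks $\{\varphi_C(2i),\varphi_C(2i+1)\}$, but such a block fails to be a module of the whole hypergraph because some $E_\varepsilon$ through $C$ separates $\varphi_C(2i)$ from $\varphi_C(2i+1)$ (here \eqref{critical_construction_1} forces the existence of a second non-singleton component, hence such an $\varepsilon$) — or the component-index set of $N$ is a nontrivial module of $\mathbb{H}$, contradicting (C1)+(C2) after observing that a module of $\mathbb{H}$ that is not a single component and not everything would make $\mathbb{H}$ disconnected or violate (C2); the remaining small cases ($N$ a singleton-union) are again excluded by (C2) and (C1). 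Criticality: for each vertex $u\in V(\Gamma)$ I must exhibit a nontrivial module of $(\Gamma\bullet\mathbb{H})-u$. If $u$ lies in a non-singleton odd component $C$, removing it breaks $U_{v(C)}$ and by Schmerl--Trotter $U_{v(C)}-u$ is decomposable, and the corresponding block lifts to a module of the whole deletion (the external edges $E_\varepsilon$ respect the coarser block structure of $U_{v(C)}-u$); if $u$ lies in an even component or a non-singleton odd component in one of the $E_\varepsilon$-controlled positions, a similar block argument works using the explicit indexing $0\le i\le j\le w(C)-1$; and if $u\in V_1(\Gamma)$, then either $\mathbb{H}-\{\{u\}\}$ is disconnected — in which case a union of components gives the module — or it is connected and (C3) supplies the required $\mathbb{M}_{\{u\}}\subseteq\mathfrak{C}_1(\Gamma)$, whose vertex union is a nontrivial module of $(\Gamma\bullet\mathbb{H})-u$.

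\textbf{Main obstacle.} The crux is the transfer lemma: proving that every module of $\Gamma\bullet\mathbb{H}$ is ``pure'' — either internal to a single component (and then a $U$-block) or a union of whole components indexed by a module of $\mathbb{H}$ — with no hybrid modules that partially cut a component while spanning several. This requires a delicate case analysis of edges of the three types ($E_C$, $E_\varepsilon$ with $|\varepsilon|=2$, $E_\varepsilon$ with $|\varepsilon|=3$) straddling the putative module, and it is here that each clause of \eqref{critical_construction_1} and \eqref{critical_construction_2} is pinned down as exactly what is needed to forbid the hybrids; the even/odd parity bookkeeping in the definitions of $w(C)$ and of the $E_\varepsilon$ is what makes the $U_{2n+1}$ rigidity propagate correctly across the join.
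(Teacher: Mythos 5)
Your overall architecture does parallel the paper's: a transfer result between modules of $\Gamma\bullet\mathbb{H}$ and modules of $\mathbb{H}$, the identity $(\Gamma\bullet\mathbb{H})-v=(\Gamma-v)\bullet(\mathbb{H}-\{v\})$ for $v\in V_1(\Gamma)$, and explicit modules of $(\Gamma\bullet\mathbb{H})-u$ when $u$ lies in a non-singleton component. However, your transfer lemma is missing its decisive refinement, and both places where you compensate for this are incorrect, so there is a genuine gap. The refinement (Lemma~\ref{lem2_Crticial_hyper} and Corollary~\ref{cor1_Crticial_hyper} of the paper) is that, when $\mathbb{H}$ is connected, every nontrivial module of $\Gamma\bullet\mathbb{H}$ is contained in $V_1(\Gamma)$, i.e.\ consists of singleton components only. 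Your version stops at ``a union of whole components indexed by a module of $\mathbb{H}$'', and for primality you then assert that a nontrivial module of $\mathbb{H}$ ``would make $\mathbb{H}$ disconnected or violate (C2)''. That assertion is false: (C2) forbids only the nontrivial modules of $\mathbb{H}$ contained in $\mathfrak{C}_1(\Gamma)$, and nontrivial modules of $\mathbb{H}$ meeting $\mathfrak{C}(\Gamma)\setminus\mathfrak{C}_1(\Gamma)$ genuinely occur under (C1)--(C2): take $C,D$ two $3$-vertex paths, $E,F$ singletons, and $E(\mathbb{H})=\{CEF,DEF\}$; then $\{C,D\}$ is a nontrivial module of $\mathbb{H}$, yet (C1) and (C2) hold and $\Gamma\bullet\mathbb{H}$ is prime. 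So in the case where the index set of your module $N$ contains a non-singleton component, your argument produces no contradiction; what must be shown is that $N$ can never absorb a whole non-singleton component (nor a whole even component), and this is exactly where the paper uses the separate fact that $(\Gamma\bullet\mathbb{H})[\overline{\varepsilon}]$ is prime for every $\varepsilon\in E(\mathbb{H})$ (Lemma~\ref{Cl0_Rbis_Critical_hyper}) together with the parity pattern of the edges $E_\varepsilon$. Your plan contains no substitute for this step.

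The same missing refinement invalidates your forward derivation of (C3). From the decomposability of $(\Gamma-v)\bullet(\mathbb{H}-\{v\})$ and the connectedness of $\mathbb{H}-\{v\}$ your transfer lemma yields only some nontrivial module of $\mathbb{H}-\{v\}$, and you then force it into $\mathfrak{C}_1(\Gamma)\setminus\{\{v\}\}$ by claiming that otherwise $\mathbb{M}_{\{v\}}$ or $\mathbb{M}_{\{v\}}\cup\{\{v\}\}$ would violate (C2) for $\mathbb{H}$. But a module of $\mathbb{H}-\{v\}$ need not extend to a module of $\mathbb{H}$, with or without $\{v\}$ added; this failure to extend is precisely the critical phenomenon, so the step collapses (and even if it extended, it would violate (C2) only if contained in $\mathfrak{C}_1(\Gamma)$, which is what you are trying to prove). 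The correct route is to apply the $V_1$-containment to the deleted structure: a nontrivial module $N$ of $(\Gamma-v)\bullet(\mathbb{H}-\{v\})$ satisfies $N\subseteq V_1(\Gamma-v)$, whence $N/\mathfrak{C}(\Gamma-v)$ is a nontrivial module of $\mathbb{H}-\{v\}$ contained in $\mathfrak{C}_1(\Gamma)\setminus\{\{v\}\}$, which is (C3). Two further, more local, inaccuracies: the pairs $\{\varphi_C(2i),\varphi_C(2i+1)\}$ are not modules of $C_3(U_{v(C)})$ at all (that hypergraph is prime), so the ``$U$-block'' alternative in your transfer lemma never arises as stated; and when you delete an endpoint $\varphi_C(0)$ of a non-singleton component, the nontrivial module of the induced deletion does not lift once $C$ meets external edges (those edges use only even-indexed vertices of $C$) --- the module one must exhibit is $V(\Gamma\bullet\mathbb{H})\setminus\{\varphi_C(0),\varphi_C(1)\}$, the complement of the vertex $\varphi_C(1)$, which becomes isolated, as in Lemmas~\ref{lem4_Crticial_hyper} and \ref{lem5_Crticial_hyper}.
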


Before proving Theorem~\ref{thm_2_main}, we establish the next proposition. 

\begin{prop}\label{Prop1_Critical_hyper}
The 3-hypergraph $\Gamma\bullet\mathbb{H}$ is prime if and only if the following two assertions hold
\begin{enumerate}
\item[(C1)] $\mathbb{H}$ is connected;
\item[(C2)] for every nontrivial module $\mathbb{M}$ of $\mathbb{H}$, we have 
$\mathbb{M}\setminus\mathfrak{C}_{1}(\Gamma)\neq\emptyset$. 
\end{enumerate}
\end{prop}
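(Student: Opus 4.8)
The plan is to prove both implications by analyzing how modules of $\Gamma\bullet\mathbb{H}$ relate to modules of $\mathbb{H}$ and to the internal structure of the paths $C\in\mathfrak{C}(\Gamma)$. The key preliminary observation is that each component $C\in\mathfrak{C}_{\mathrm{odd}}(\Gamma)\setminus\mathfrak{C}_1(\Gamma)$ carries, via $\varphi_C$, a copy of the $C_3$-structure of the critical tournament $U_{v(C)}$, hence $(\Gamma\bullet\mathbb{H})[V(C)]$ is prime (being isomorphic to $C_3(U_{v(C)})$, which is prime by Theorem~\ref{thm_same_primality} since $U_{v(C)}$ is prime), while for $C\in\mathfrak{C}_{\mathrm{even}}(\Gamma)$ the induced subhypergraph on $V(C)$ has no edges. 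I would first record these facts together with the way the ``linking'' edge sets $E_\varepsilon$ behave: the defining inequalities $0\le i\le j\le w(C)-1$ etc.\ are designed so that, within a fixed component, the even vertices $\varphi_C(2i)$ and the odd vertices $\varphi_C(2j+1)$ each form a module of the relevant bipartite-type pattern, and across components the pattern only sees ``which component'' a vertex lies in. This is the bridge: a module $\mathbb{M}$ of $\mathbb{H}$ should lift to the module $\bigcup_{C\in\mathbb{M}}V(C)$ of $\Gamma\bullet\mathbb{H}$, and conversely a module of $\Gamma\bullet\mathbb{H}$ should, after intersecting with each $V(C)$, be controlled by these two observations.

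For the forward direction (primality implies (C1) and (C2)), I would argue contrapositively. If $\mathbb{H}$ is disconnected, write $V(\mathbb{H})=\mathbb{A}\sqcup\mathbb{B}$ with no edge of $\mathbb{H}$ meeting both parts; then I claim $\bigcup_{C\in\mathbb{A}}V(C)$ is a nontrivial module of $\Gamma\bullet\mathbb{H}$ — any edge of $\Gamma\bullet\mathbb{H}$ either lies inside a single component (an $E_C$ edge, hence inside one part) or is an $E_\varepsilon$ edge with $\varepsilon\subseteq\mathbb{A}$ or $\varepsilon\subseteq\mathbb{B}$, so no edge crosses, making the module condition vacuous; nontriviality follows from \eqref{critical_construction_1} which forces at least two non-singleton components and hence $v(\Gamma\bullet\mathbb{H})\ge 5$ to be split nontrivially. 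If instead $\mathbb{H}$ is connected but has a nontrivial module $\mathbb{M}\subseteq\mathfrak{C}_1(\Gamma)$, then $\mathbb{M}$ is a set of singleton components, say $\mathbb{M}=\{\{v_1\},\dots,\{v_k\}\}$ with $k\ge 2$; I would check directly that $\{v_1,\dots,v_k\}$ is a module of $\Gamma\bullet\mathbb{H}$: every edge of $\Gamma\bullet\mathbb{H}$ uses at most one vertex from any given singleton component (true since $E_\varepsilon$-edges of size $3$ pick one vertex per component and size-$2$-type edges pick two vertices from an even component plus one from an odd component), and the module axiom for $\mathbb{H}$ guarantees that swapping $v_i$ for $v_j$ inside an edge preserves membership in $E_\varepsilon$. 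Nontriviality again uses $k\ge 2$ and $\mathbb{M}\ne V(\mathbb{H})$ (the latter because \eqref{critical_construction_1} guarantees a non-singleton component exists).

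For the converse (assuming (C1) and (C2), show $\Gamma\bullet\mathbb{H}$ is prime), suppose $M$ is a nontrivial module of $\Gamma\bullet\mathbb{H}$ and derive a contradiction. The main work, and the step I expect to be the chief obstacle, is the \emph{local analysis}: showing that for each component $C$, the trace $M\cap V(C)$ is either $\emptyset$, all of $V(C)$, or a singleton — and in the non-singleton-component case that only $\emptyset$ and $V(C)$ occur. For $C\in\mathfrak{C}_{\mathrm{odd}}(\Gamma)\setminus\mathfrak{C}_1(\Gamma)$ this uses that $(\Gamma\bullet\mathbb{H})[V(C)]\cong C_3(U_{v(C)})$ is prime together with a careful use of the linking edges $E_\varepsilon$ to rule out singleton traces (here one exploits that $U_{v(C)}$ is \emph{critical}, so removing the one vertex of $M\cap V(C)$ would have to interact with the external structure — this is where $p_{(H,X)}$-type reasoning from Notation~\ref{nota_p_hyper} and Theorem~\ref{prop_Eh-2} enters). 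For $C\in\mathfrak{C}_{\mathrm{even}}(\Gamma)$, since $(\Gamma\bullet\mathbb{H})[V(C)]$ is edgeless one cannot argue internally; instead one uses the size-$2$-type linking edges, whose inequality $0\le i\le j\le w(C)-1$ makes the even and odd vertices of $C$ behave like a ``prime-ish'' pattern once at least one incident $\varepsilon\in E(\mathbb{H})$ exists (guaranteed by connectivity (C1)). Once the local structure is pinned down, define $\mathbb{M}=\{C\in\mathfrak{C}(\Gamma): V(C)\subseteq M\}$; the local analysis shows $M=\bigcup_{C\in\mathbb{M}}V(C)$ possibly together with isolated vertices from singleton components, and then a short verification shows $\mathbb{M}$ is a module of $\mathbb{H}$. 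Nontriviality of $M$ forces $\mathbb{M}$ nontrivial or $\mathbb{M}\subseteq\mathfrak{C}_1(\Gamma)$, contradicting (C2) (or, in the all-singleton case, contradicting (C2) after noting such an $\mathbb{M}$ would be a nontrivial module contained in $\mathfrak{C}_1(\Gamma)$). The delicate point throughout is the bookkeeping in the even-component case and the criticality argument in the odd-component case; the rest is routine case-checking against the definition of $E(\Gamma\bullet\mathbb{H})$.
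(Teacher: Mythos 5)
Your overall strategy coincides with the paper's: for the direction ``not (C1) or not (C2) implies decomposable'' you lift a side of a disconnection, respectively a nontrivial module $\mathbb{M}\subseteq\mathfrak{C}_1(\Gamma)$, to a nontrivial module of $\Gamma\bullet\mathbb{H}$ (this is exactly Lemmas~\ref{lem1bis_Crticial_hyper} and~\ref{lem3_Crticial_hyper}); for the converse you analyse the trace $M\cap V(C)$ of a nontrivial module $M$ of $\Gamma\bullet\mathbb{H}$ on each component and project to a module $M/\mathfrak{C}(\Gamma)$ of $\mathbb{H}$ (this is Lemma~\ref{lem1_Crticial_hyper}). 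But your converse direction, as written, stops short of a contradiction. Knowing that $\mathbb{M}=M/\mathfrak{C}(\Gamma)$ is a \emph{nontrivial} module of $\mathbb{H}$ does not contradict (C2): condition (C2) only forbids nontrivial modules of $\mathbb{H}$ that are entirely made of singleton components, and nothing in your local analysis rules out that $M$ is the union of the vertex sets of components at least one of which is a non-singleton path. To close the argument you must prove, in addition, that $M\subseteq V_1(\Gamma)$, i.e.\ that every component met by $M$ is a singleton. In the paper this is Corollary~\ref{cor1_Crticial_hyper}, and it is not a formality: it uses Lemma~\ref{lem2_Crticial_hyper} (a nontrivial module of $\mathbb{H}$ lies inside $\mathfrak{C}_{\rm even}(\Gamma)$ or inside $\mathfrak{C}_{\rm odd}(\Gamma)$) together with Lemma~\ref{Cl0_Rbis_Critical_hyper} (for every $\varepsilon\in E(\mathbb{H})$ the subhypergraph $(\Gamma\bullet\mathbb{H})[\overline{\varepsilon}]$ is prime): if some $C\in M/\mathfrak{C}(\Gamma)$ had $v(C)\geq 2$, then by replacing a component of a crossing edge of $\mathbb{H}$ by $C$ (using that $M/\mathfrak{C}(\Gamma)$ is a module of $\mathbb{H}$) one exhibits $V(C)$ as a nontrivial module of a prime edge-induced subhypergraph, a contradiction. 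Neither this step nor the primality of the edge-induced subhypergraphs $(\Gamma\bullet\mathbb{H})[\overline{\varepsilon}]$ (itself about a page of work in the paper) appears in your plan, so the sufficiency of (C1)--(C2) is not actually established.

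A smaller point: to rule out a singleton trace $M\cap V(C)=\{c\}$ on a non-singleton odd component you invoke criticality of $U_{v(C)}$ and the $p_{(H,X)}$/Theorem~\ref{prop_Eh-2} machinery; this is an unnecessary detour and it is unclear how it would be carried out. The direct argument is elementary: since $(\Gamma\bullet\mathbb{H})[V(C)]$ is prime, $c$ lies in some edge $e\subseteq V(C)$; picking $v\in M\setminus V(C)$ (possible as $|M|\geq 2$) and applying the exchange property of the module $M$ yields an edge with exactly two vertices in the odd component $C$ and one outside, which the construction of $\Gamma\bullet\mathbb{H}$ forbids. The analogous argument for even components uses the prime subhypergraph $(\Gamma\bullet\mathbb{H})[V(C)\cup\{\varphi_D(0)\}]$ supplied by connectivity, as in the paper's proof of Lemma~\ref{lem1_Crticial_hyper}.
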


Finally, we improve Theorem~\ref{thm_2_main} by characterizing the 3-hypergraph $\Gamma\bullet\mathbb{H}$ whenever it is critical and satisfies 
$\mathscr{P}(\Gamma\bullet\mathbb{H})=\Gamma$ (see Theorem~\ref{thm_3_main} in Section~\ref{sec_improvement}). 

\section{The critical tournaments}

The primality graph associated with a prime tournament is defined in the same as that associated with a 
prime 3-hypergraph (see Definition~\ref{defi_primality_graph}). 

\begin{defi}\label{defi_primality_graph_tournament}
Let $T$ be a prime tournament. 
The {\em primality graph} $\mathscr{P}(T)$ of $T$ is defined on $V(T)$ as follows. 
Given distinct $v,w\in V(T)$, $vw\in E(\mathscr{P}(T))$ if $T-\{v,w\}$ is prime. 
\end{defi}

The basic properties of the primality graph follow. 
The next lemma is stated in ~\cite{I93} without a proof. For a proof, see \cite[Lemma~10]{BI09}. 

\begin{lem} [Ille~\cite{I93}]\label{lem Boud Ille}
Let $T$ be a critical tournament with $v(T)\geq 5$. 
For every $v\in\mathscr{C}(T)$, we have 
$d_{\mathscr{P}(T)}(v)\leq 2$.
Moreover, the next two assertions hold. 
\begin{enumerate}
\item Given $v\in V(T)$, if $d_{\mathscr{P}(T)}(v)=1$, then 
$V(T)\setminus(\{v\}\cup N_{\mathscr{P}(T)}(v))$ is a module of $T-v$. 
\item Given $v\in V(T)$, if $d_{\mathscr{P}(T)}(v)=2$, then 
$N_{\mathscr{P}(T)}(v)$ is a module of $T-v$. 
\end{enumerate}
\end{lem}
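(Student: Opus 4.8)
The plan is to analyze the critical tournaments $T_{2n+1}$, $U_{2n+1}$, and $W_{2n+1}$ one family at a time, since Theorem~\ref{thm _Critical_ST} tells us these are (up to isomorphism) the only critical tournaments with at least $5$ vertices. For each family I would first determine exactly which pairs $\{v,w\}$ have the property that removing them leaves a prime tournament, thereby computing $\mathscr{P}(T)$ explicitly as a graph on $\{0,\ldots,2n\}$. For $U_{2n+1}$ and $W_{2n+1}$ the known description (from \cite{I93,BI09}) is that $\mathscr{P}$ is essentially a path or a near-path, so the maximum degree $2$ bound is immediate; for $T_{2n+1}$ one must also check the small/degenerate cases. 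This is the routine but indispensable bookkeeping step: knowing the edge set of $\mathscr{P}(T)$ is what makes everything else fall out.

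Once $\mathscr{P}(T)$ is in hand, the degree bound $d_{\mathscr{P}(T)}(v)\le 2$ follows by inspection in each of the three cases. For assertions (1) and (2), I would argue as follows. Suppose $d_{\mathscr{P}(T)}(v)=1$, say $N_{\mathscr{P}(T)}(v)=\{w\}$. Since $T$ is critical, $T-v$ is decomposable, so it has a nontrivial module $N$. The key claim is that $N$ must equal $V(T)\setminus(\{v\}\cup\{w\})$. To see this: because $vw$ is an edge of $\mathscr{P}(T)$, $T-\{v,w\}=(T-v)-w$ is prime, which forces $w\notin N$ and $|N\cap(V(T)\setminus\{v,w\})|\le 1$ or $N\supseteq V(T)\setminus\{v,w\}$ — more precisely, a nontrivial module of $T-v$ that survives (in a controlled way) in the prime $T-\{v,w\}$ must be exactly the complement of $w$ in $V(T)\setminus\{v\}$; and minimality/maximality considerations together with the fact that $v$ has no other $\mathscr{P}$-neighbour pin it down. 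For assertion (2), when $d_{\mathscr{P}(T)}(v)=2$ with $N_{\mathscr{P}(T)}(v)=\{w_1,w_2\}$, a parallel argument using that both $T-\{v,w_1\}$ and $T-\{v,w_2\}$ are prime shows the nontrivial module of $T-v$ must be $\{w_1,w_2\}$ itself.

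Rather than doing this module-chasing abstractly, the cleaner route — and the one I expect the paper takes — is to verify assertions (1) and (2) directly from the explicit combinatorial descriptions of $T_{2n+1}$, $U_{2n+1}$, $W_{2n+1}$: in each case the linear order $L_{2n+1}$ underlies everything, the relevant $\mathscr{P}$-neighbours of a vertex $v$ are its order-predecessor and order-successor (or endpoints), and one checks by the definition of a module in a tournament that the claimed set $N$ is closed under the "if $xv',v'y\in A(T)$ then $v'\in N$" condition inside $T-v$. This reduces to a finite, case-based verification of a few adjacency patterns near $v$.

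The main obstacle is the degenerate and boundary behaviour: vertices at the ends of the path-like structure $\mathscr{P}(T)$ (degree $0$ or $1$), small values of $n$ (notably $n=1$, i.e.\ $v(T)=3$, which is excluded, and $n=2$, i.e.\ $v(T)=5$), and the vertex $2n$ in $W_{2n+1}$ which plays a special role in the construction. One must check that even in these cases the degree never exceeds $2$ and that the asserted modules are genuinely nontrivial (nonempty, proper, and not a singleton when that would make them trivial). Handling $W_{2n+1}$ carefully, where the primality graph is not symmetric under reversing the order, is where the argument needs the most attention.
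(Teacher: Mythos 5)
Your proposal is correct in outline, but it is genuinely different from what the paper does: the paper gives no proof of this lemma at all, quoting it from Ille and pointing to \cite[Lemma~10]{BI09}, where the argument is a general module-theoretic one in the spirit of Lemma~\ref{fact1_same_block}. There, for a $\mathscr{P}(T)$-neighbour $w$ of $v$ one sets $X=V(T)\setminus\{v,w\}$, observes that $T[X]$ is prime, and applies the trichotomy for modules relative to a prime induced substructure: a nontrivial module of $T-v$ is either $X$ itself or a pair $\{y,w\}$ with $y\in X$; in the latter case the map fixing $V(T)\setminus\{v,y,w\}$ and sending $w$ to $y$ is an isomorphism from $T-\{v,y\}$ onto $T-\{v,w\}$, so $y$ is another $\mathscr{P}(T)$-neighbour of $v$, and from this both the bound $d_{\mathscr{P}(T)}(v)\leq 2$ and assertions (1)--(2) follow with no classification at all. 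Your route instead invokes Theorem~\ref{thm _Critical_ST} and checks the statement family by family in $T_{2n+1}$, $U_{2n+1}$, $W_{2n+1}$; this is legitimate and non-circular within the paper (Schmerl--Trotter is quoted independently of primality graphs), and the adjacency checks you describe do go through, but essentially all of the work then sits in what you call ``routine bookkeeping'': for each family you must prove that $T-\{v,w\}$ is prime for the designated pairs and exhibit modules for all other pairs, i.e.\ re-derive the computational halves of Propositions~\ref{prop_T2n+1}, \ref{prop_U2n+1} and \ref{prop_W2n+1}, whereas quoting those propositions wholesale would lean on results whose own proofs in \cite{BI09} use this very lemma. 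Also, your middle paragraph's abstract alternative is not a proof as written: ``minimality/maximality considerations \dots pin it down'' skips precisely the $X$-versus-$\{y,w\}$ dichotomy and the isomorphism argument above. The trade-off is clear: your approach buys concreteness (once $\mathscr{P}$ of the three families is known, the module claims are immediate order/parity checks), while the general argument buys robustness --- it transfers verbatim to the 3-hypergraph analogue (Lemma~\ref{Lcritical}), which the paper also needs and for which no classification is available, that classification being the paper's goal.
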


For each $n\geq 3$, recall that the {\em cycle} $C_n$ is defined on $V(C_n)=\{0,\ldots,n-1\}$ by 
$E(C_n)=E(P_n)\cup\{0(n-1)\}$. 
The {\em length} of $C_n$ is $n$. 
Given a critical tournament $T$, it follows from Lemma~\ref{lem Boud Ille} that the connected components of $\mathscr{P}(T)$ are paths or cycles. 
Boudabbous and Ille \cite{BI09} characterized the critical tournaments from their 
primality graphs. 
To begin, they examine the connected components of the primality graph associated with a critical tournament. 

\begin{lem}[Corollary 17 \cite{BI09}]\label{cor_BI09}
If $T$ is a critical tournament, with $v(T)\geq 5$, then $\mathscr{P}(T)$ satisfies one of the following 
\begin{enumerate}
\item $\mathscr{P}(T)$ is a cycle of odd length;
\item $\mathscr{P}(T)$ is a path; 
\item $v(T)$ is odd and there is $v\in V(T)$ such that $\mathscr{P}(T)-v$ is a path and 
$N_{\mathscr{P}(T)}(v)=\emptyset $. 
\end{enumerate}
\end{lem}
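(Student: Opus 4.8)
The plan is to start from Lemma~\ref{lem Boud Ille}, which forces every vertex of $\mathscr{P}(T)$ to have degree at most $2$; hence each connected component of $\mathscr{P}(T)$ is either a path (possibly a single vertex) or a cycle. The argument then proceeds by ruling out, and constraining, the possible multi-component and multi-cycle configurations, using the module structure attached to the degree-$1$ and degree-$2$ vertices by parts (1) and (2) of Lemma~\ref{lem Boud Ille}, together with the primality of $T$.

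First I would dispose of cycles of even length. If some component $C$ of $\mathscr{P}(T)$ is a cycle, pick $v\in V(C)$; then $d_{\mathscr{P}(T)}(v)=2$ and, by Lemma~\ref{lem Boud Ille}(2), $N_{\mathscr{P}(T)}(v)$ is a module of $T-v$. Since $T-v$ is decomposable this is not immediately contradictory, but one now runs this over the whole cycle: the two neighbours of $v$ along $C$ form a module of $T-v$ of size $2$, and sliding $v$ around $C$ forces a ``linear/alternating'' pattern on the arcs of $T$ restricted to $V(C)$. Comparing this with the Schmerl--Trotter classification (Theorem~\ref{thm _Critical_ST}) — equivalently, redoing the parity computation that shows $T_{2n+1}$, $U_{2n+1}$, $W_{2n+1}$ have odd order — yields that a cyclic component must have odd length and, moreover, that $T$ cannot have a second component once it has a cyclic one (a cycle component already ``uses up'' all the non-primality of the one-vertex-deleted subhypergraphs). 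This gives case (1), with $\mathscr{P}(T)$ equal to that single odd cycle.

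Next, assuming $\mathscr{P}(T)$ has no cyclic component, every component is a path. I would show there are at most two components, at most one of which is a single vertex, by a counting/module-intersection argument: if $u$ and $v$ are two distinct isolated vertices of $\mathscr{P}(T)$, then by Lemma~\ref{lem Boud Ille}, thinking of them as the degenerate degree-$0$ case, one gets that $V(T)\setminus\{u\}$-type sets and $V(T)\setminus\{v\}$-type sets are modules, and intersecting/combining two such large modules (Definition~\ref{defi_module_hyper} behaves well under intersection and under the ``overlap'' operation) produces a nontrivial module of $T$ itself, contradicting primality. The same overlap trick rules out a path component and two isolated vertices simultaneously, and rules out two non-singleton path components when their associated ``far'' modules (Lemma~\ref{lem Boud Ille}(1)) would otherwise cover $V(T)$. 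What survives is: either $\mathscr{P}(T)$ is a single path (case (2)), or $\mathscr{P}(T)$ is a single path plus one isolated vertex $v$ — i.e. $\mathscr{P}(T)-v$ is a path and $N_{\mathscr{P}(T)}(v)=\emptyset$ (case (3)). Finally, in case (3) I would pin down the parity of $v(T)$: the isolated vertex $v$ means $T-v$ is decomposable with a module structure inherited from the endpoints of the path $\mathscr{P}(T)-v$, and the endpoint analysis via Lemma~\ref{lem Boud Ille}(1) forces $v(T)$ odd, exactly as in the known parity obstruction for critical tournaments.

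The main obstacle I anticipate is the bookkeeping in the ``overlap'' step: showing that two overlapping nontrivial modules of the various one-vertex-deleted tournaments amalgamate into a nontrivial module of $T$ requires care, because a module of $T-v$ is not automatically a module of $T$, and one must track how $v$ attaches. Handling this cleanly — presumably by invoking the standard fact that modules of a (hyper)graph are closed under intersection and that two overlapping modules whose union is not everything generate a nontrivial module — is where the real work lies; everything else is the degree bound from Lemma~\ref{lem Boud Ille} plus parity arithmetic. \qedhere
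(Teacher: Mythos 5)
Note first that the paper does not prove this lemma at all: it is quoted as Corollary~17 of \cite{BI09}, so your attempt has to stand on its own, and as written it has a genuine gap. The opening step (degree at most $2$ by Lemma~\ref{lem Boud Ille}, hence every component of $\mathscr{P}(T)$ is a path or a cycle) is fine, and your even-cycle idea (slide a degree-$2$ vertex around the cycle and extract a $2$-element module) is in the right spirit; it is essentially what the paper does in the hypergraph analogue, Proposition~\ref{Pcritical_T2n+1}. But the core of the lemma is the restriction on how many components there are and of what kind, and there your mechanism does not exist: Lemma~\ref{lem Boud Ille} has no ``degenerate degree-$0$ case''. For an isolated vertex $u$ of $\mathscr{P}(T)$ it yields nothing, so the ``$V(T)\setminus\{u\}$-type'' sets you propose to overlap are never shown to be modules of anything, and the whole intersection/overlap argument excluding two isolated vertices, or extra path components, has no foundation. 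The tool that actually does this job is the extension theorem (Theorem~\ref{prop_Eh-2} is the paper's hypergraph version; the tournament ancestor is used in \cite{BI09}): starting from a prime induced subtournament on the vertex set of one component, one obtains two vertices whose removal leaves a prime tournament, criticality forces them to be distinct and hence adjacent in $\mathscr{P}(T)$, and one then argues they lie in a different, non-singleton component --- this is exactly how the paper proceeds in the third assertions of Propositions~\ref{Pcritical_U2n+1} and~\ref{Pcritical_V2n+1}. Similarly, your claims that a cyclic component forces $\mathscr{P}(T)$ to be that single cycle (``uses up all the non-primality'') and that in case (3) the endpoint analysis ``forces $v(T)$ odd'' are asserted rather than proved; in the cycle case the real argument is to show that the vertex set of the cycle is a module of $T$ itself and conclude from primeness that it is all of $V(T)$.

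Within this paper's logical setup there is also a much shorter correct route, which you brush against when you invoke Schmerl--Trotter but do not commit to: by Theorem~\ref{thm _Critical_ST}, $T$ is isomorphic to $T_{v(T)}$, $U_{v(T)}$ or $W_{v(T)}$ with $v(T)$ odd, and the ``if'' directions of Propositions~\ref{prop_T2n+1}, \ref{prop_U2n+1} and \ref{prop_W2n+1} (direct computations, independent of the present lemma, hence no circularity) give that the primality graphs of these tournaments are, respectively, an odd cycle, a path, and a path together with an isolated vertex; since $\mathscr{P}(T)$ is invariant under isomorphism, the three cases follow at once. So either take that short route, or, for a self-contained structural proof, replace the ``degree-$0$'' step by the extension-theorem argument and carry out the module computations explicitly instead of appealing to parity heuristics.
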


For each of the three shapes describe in Lemma~\ref{cor_BI09}, 
Boudabbous and Ille \cite{BI09} characterized the corresponding critical tournaments. 

\begin{prop}[Proposition 18 \cite{BI09}]\label{prop_T2n+1}
Given a tournament such that $v(T)\geq 5$, 
$T$ is critical and $\mathscr{P}(T)=C_{v(T)}$ if and only if $v(T)$ is odd and $T=T_{2n+1}$ or 
$(T_{2n+1})^\star$. 
\end{prop}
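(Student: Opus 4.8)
The plan is to prove Proposition~\ref{prop_T2n+1} by a two-directional argument, exploiting the definition of the tournaments $T_{2n+1}$ and their duals together with Lemma~\ref{cor_BI09} and Lemma~\ref{lem Boud Ille}. For the ``if'' direction, I would first recall that by Theorem~\ref{thm _Critical_ST} the tournament $T_{2n+1}$ is critical, so it suffices to compute its primality graph. I would fix the vertex set $\{0,\dots,2n\}$ and, for each pair $\{i,j\}$, determine whether $T_{2n+1}-\{i,j\}$ is indecomposable. The key observation is that $T_{2n+1}$ is obtained from the linear order $L_{2n+1}$ by reversing all arcs between even and odd vertices, so deleting two vertices of the \emph{same} parity leaves a tournament on which the set of remaining even (resp.\ odd) vertices is a nontrivial module unless that set is too small; a short parity/counting check shows $T_{2n+1}-\{i,j\}$ is decomposable whenever $i\not\equiv j+1 \pmod 2$ is impossible, i.e.\ one shows primality survives exactly for the consecutive pairs $\{k,k+1\}$. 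This yields $\mathscr{P}(T_{2n+1})=C_{2n+1}$ after also checking the ``wrap-around'' pair $\{0,2n\}$, which works because the construction is symmetric under the circular relabelling. Since the primality graph is a tournament invariant up to isomorphism and dualizing a tournament dualizes none of the decomposability relations, $\mathscr{P}((T_{2n+1})^\star)=C_{2n+1}$ as well, completing this direction.

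For the ``only if'' direction, I would assume $T$ is critical with $v(T)\ge 5$ and $\mathscr{P}(T)=C_{v(T)}$, and first deduce $v(T)$ is odd: by Lemma~\ref{cor_BI09} case~1 the cycle has odd length, so $v(T)=2n+1$ for some $n\ge 2$ (the case $n=2$, $v(T)=5$, may need separate small-case verification). The strategy is then to reconstruct $T$ from the cyclic structure of $\mathscr{P}(T)$. Label the vertices $v_0,\dots,v_{2n}$ so that consecutive indices (mod $2n+1$) are the edges of $\mathscr{P}(T)$. For each $i$, $d_{\mathscr{P}(T)}(v_i)=2$, so by part~2 of Lemma~\ref{lem Boud Ille}, $N_{\mathscr{P}(T)}(v_i)=\{v_{i-1},v_{i+1}\}$ is a module of $T-v_i$. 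This is a strong rigidity constraint: knowing that every such pair is a module of the corresponding one-vertex-deleted tournament forces, step by step, the arc between $v_{i-1}$ and $v_{i+1}$ and more generally pins down all arcs. I would set up a recursion: choosing without loss of generality the orientation of one arc, the module conditions propagate a consistent orientation around the cycle, and the only tournament (up to isomorphism) compatible with all $2n+1$ module conditions is $T_{2n+1}$ or its dual $(T_{2n+1})^\star$.

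The main obstacle I anticipate is the bookkeeping in the ``only if'' direction: showing that the family of module conditions $\{N_{\mathscr{P}(T)}(v_i)$ is a module of $T-v_i\}_{i}$ has \emph{exactly} the two solutions $T_{2n+1}$ and $(T_{2n+1})^\star$, and no others. One must verify both that these two candidates genuinely satisfy all the conditions (consistency) and that the conditions determine the tournament uniquely up to the choice made at the first arc (rigidity). The rigidity argument likely goes by induction on the cyclic distance from a base vertex: if the arcs among $v_0,\dots,v_k$ are determined, the condition that $\{v_{k-1},v_{k+1}\}$ is a module of $T-v_k$ forces the arcs incident to $v_{k+1}$ from $v_0,\dots,v_{k-1}$ in terms of those incident to $v_{k-1}$, and then primality of $T$ (no global nontrivial module) closes up the cycle consistently, eliminating spurious solutions. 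A secondary subtlety is handling small $n$ by hand, since Lemma~\ref{lem Boud Ille} and Lemma~\ref{cor_BI09} are stated for $v(T)\ge 5$ and the asymptotic structure arguments may degenerate there; I would dispatch $v(T)=5$ (and possibly $7$) by direct enumeration before running the general induction.
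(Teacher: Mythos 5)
There is nothing in the paper to compare against: Proposition~\ref{prop_T2n+1} is imported verbatim from Boudabbous and Ille \cite{BI09} and is never proved here (the closest thing the paper proves is its 3-hypergraph analogue, Proposition~\ref{Pcritical_T2n+1}). Judged on its own, your proposal has the right skeleton -- Theorem~\ref{thm _Critical_ST} for criticality, a direct computation of $\mathscr{P}(T_{2n+1})$ for the ``if'' direction, and Lemma~\ref{lem Boud Ille} plus a propagation argument for the ``only if'' direction, which is indeed the spirit of \cite{BI09} and of the paper's proof of Proposition~\ref{Pcritical_T2n+1} -- but both halves have genuine gaps. In the ``if'' direction, the mechanism you propose is false: deleting two vertices of the same parity does \emph{not} in general leave a parity class as a module. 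For $T_7$ and the pair $\{0,4\}$, in $T_7-\{0,4\}$ the vertex $2$ separates the odd class ($2\rightarrow 1$ but $3\rightarrow 2$) and the vertex $3$ separates the even class ($3\rightarrow 2$ but $6\rightarrow 3$); the tournament is decomposable, but its nontrivial module is $\{3,5\}$, not a parity class. Moreover your parity dichotomy says nothing about non-consecutive pairs of \emph{opposite} parity (e.g.\ $\{0,3\}$ in $T_7$, where one must exhibit the module $\{2,4\}$), and the sentence ``decomposable whenever $i\not\equiv j+1\pmod 2$ is impossible'' is not a usable criterion. What is actually needed is, for every non-edge $\{i,j\}$ of $C_{2n+1}$, an explicit nontrivial module of $T_{2n+1}-\{i,j\}$ (typically a pair of the form $\{k-1,k+1\}$), together with the wrap-around symmetry you invoke.

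In the ``only if'' direction the core of the proof is asserted rather than carried out. You correctly extract from Lemma~\ref{lem Boud Ille} that each $\{v_{i-1},v_{i+1}\}$ is a module of $T-v_i$, but the claim that these conditions, plus primality, admit exactly the two solutions $T_{2n+1}$ and $(T_{2n+1})^\star$ is precisely the content of the proposition, and you give no actual propagation: no rule that orients the arc between $v_{i-1}$ and $v_{i+1}$, no induction hypothesis, no argument that the orientation closes up consistently around the odd cycle, and no use of primality to exclude degenerate solutions. The paper's proof of the analogous Proposition~\ref{Pcritical_T2n+1} shows what this step really requires: one first rules out even cycle length, then derives forced equivalences between all triples via repeated use of the module conditions, exhibits the rotation $x\mapsto x+1$ as an automorphism, and only then identifies the structure; an equivalent amount of concrete work (and the observation that the labelling is fixed, so the conclusion is equality with $T_{2n+1}$ or its dual, not merely isomorphism) is missing from your plan. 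Flagging $v(T)=5$ for separate enumeration does not repair this; if the propagation is done correctly it covers all $n\geq 2$ uniformly.
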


\begin{prop}[Proposition 19 \cite{BI09}]\label{prop_U2n+1}
Given a tournament such that $v(T)\geq 5$, 
$T$ is critical and $\mathscr{P}(T)=P_{v(T)}$ if and only if $v(T)$ is odd and $T=U_{v(T)}$ or 
$(U_{v(T)})^\star$. 
\end{prop}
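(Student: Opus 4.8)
\textbf{Proof proposal for Proposition~\ref{prop_U2n+1}.}

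The plan is to combine Lemma~\ref{lem Boud Ille} with the structural results about critical tournaments (Theorem~\ref{thm _Critical_ST}) rather than to argue from scratch. First I would establish the forward direction: assume $T$ is critical with $v(T)\geq 5$ and $\mathscr{P}(T)=P_{v(T)}$. By Theorem~\ref{thm _Critical_ST}, $v(T)$ is odd and $T$ is isomorphic to one of $T_{v(T)}$, $U_{v(T)}$, $W_{v(T)}$. I would then compute (or invoke a previously recorded computation of) the primality graph of each of these three families: one checks that $\mathscr{P}(T_{2n+1})$ is the cycle $C_{2n+1}$ — which is already the content of Proposition~\ref{prop_T2n+1} — so $T_{v(T)}$ is excluded; one checks that $\mathscr{P}(W_{2n+1})$ has the shape of item~3 in Lemma~\ref{cor_BI09} (an isolated vertex plus a path), hence is not a path, so $W_{v(T)}$ is excluded; and one checks $\mathscr{P}(U_{2n+1})=P_{2n+1}$. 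Since the primality graph is an isomorphism invariant and reversal-invariant ($\mathscr{P}(T^\star)=\mathscr{P}(T)$ because $T-\{v,w\}$ is prime iff $(T-\{v,w\})^\star=T^\star-\{v,w\}$ is prime), the only tournaments on an odd vertex set $\geq 5$ with path primality graph are $U_{v(T)}$ and $(U_{v(T)})^\star$.

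For the converse, I would verify directly that $U_{2n+1}$ is critical and that $\mathscr{P}(U_{2n+1})=P_{2n+1}$, then note that both properties transfer to $(U_{2n+1})^\star$ by reversal-invariance. Criticality of $U_{2n+1}$ is part of Schmerl–Trotter's list, so it may be quoted; the identification of the primality graph is the computational heart. Concretely, using the description of $U_{2n+1}$ as $L_{2n+1}$ with all arcs between even vertices reversed, I would determine for each pair $\{v,w\}$ whether $U_{2n+1}-\{v,w\}$ is indecomposable. The expected outcome is that primality survives exactly when $\{v,w\}$ consists of two consecutive elements of the linear order $0<1<\cdots<2n$, which is the edge set of $P_{2n+1}$. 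One natural way to organize this is to show that removing any two non-consecutive vertices leaves a nontrivial module (typically an interval of the ground order, or a pair $\{2i,2i+1\}$), while removing two consecutive vertices leaves a tournament isomorphic to a smaller $U_{2m+1}$ (or a closely related prime tournament), which is prime by induction.

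The main obstacle I anticipate is this last explicit module analysis for $U_{2n+1}$: one must be careful about boundary cases near the endpoints $0$ and $2n$ of the order and about the parity of the removed vertices, since the reversal of even–even arcs interacts differently with $\{2i,2i+1\}$ blocks than with arbitrary intervals. I would handle it by a short case split on the parities of $v$ and $w$ and on whether either equals an endpoint, exhibiting an explicit module in each non-edge case and an explicit isomorphism to a smaller critical tournament in each edge case; the inductive base ($v(T)=5$, so $T=U_5$) can be checked by hand. Everything else — the isomorphism-invariance of $\mathscr{P}$, the reversal symmetry, and the exclusion of $T_{2n+1}$ and $W_{2n+1}$ — is immediate from the already-cited results, in particular Proposition~\ref{prop_T2n+1} and Lemma~\ref{cor_BI09}.
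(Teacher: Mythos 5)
This proposition is not proved in the paper at all: it is quoted from Boudabbous and Ille \cite{BI09} (their Proposition~19), so there is no internal proof to compare your attempt with. Judged on its own terms, your plan is the natural reconstruction and is workable: Theorem~\ref{thm _Critical_ST} reduces the forward direction to the three families, $T_{v(T)}$ is excluded because its primality graph is a cycle (Proposition~\ref{prop_T2n+1}) and $W_{v(T)}$ because its primality graph has an isolated vertex (Proposition~\ref{prop_W2n+1}), and for the converse the computation goes through as you predict: $U_{2n+1}-\{k,k+1\}$ is (after the order-preserving, parity-preserving relabelling) exactly $U_{2n-1}$, hence prime, while removing a non-consecutive pair always leaves a nontrivial module.

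There is, however, one step that does not follow as written. In the forward direction you pass from ``$T$ is isomorphic to $U_{v(T)}$'' to ``$T=U_{v(T)}$ or $(U_{v(T)})^\star$'' by invoking isomorphism- and reversal-invariance of $\mathscr{P}$; that invariance only explains why $U_{v(T)}$ and its dual satisfy the conclusion, it does not show they are the only tournaments with $\mathscr{P}(T)=P_{v(T)}$ (every relabelled copy of $U_{v(T)}$ has a primality graph isomorphic to a path). You must exploit the labelled hypothesis: if $f$ is an isomorphism from $U_{v(T)}$ onto $T$, then $f$ carries $\mathscr{P}(U_{v(T)})=P_{v(T)}$ onto $\mathscr{P}(T)=P_{v(T)}$, so $f$ is an automorphism of the labelled path, i.e.\ the identity or the flip $i\mapsto v(T)-1-i$; since $v(T)-1$ is even the flip preserves parity, and a direct check shows it maps $U_{v(T)}$ onto $(U_{v(T)})^\star$, which closes the argument. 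A smaller point in the converse: the module witnesses for non-consecutive removals are the same-parity pairs at distance two, $\{2i,2i+2\}$ or $\{2i-1,2i+1\}$, together with end segments such as $\{2,\dots,2n\}$ when $0$ or $2n$ is involved; a consecutive pair $\{2i,2i+1\}$ generally fails, because any surviving even vertex other than $2i$ distinguishes its two elements, so your case split should be organized around the distance-two pairs.
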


\begin{prop}[Proposition 21 \cite{BI09}]\label{prop_W2n+1}
Given a tournament defined on $\{0,\ldots,2n\}$ with $2n+1\geq 5$, 
$T$ is critical, $\mathscr{P}(T)-2n=P_{2n-1}$ 
and $N_{\mathscr{P}(T)}(2n)=\emptyset$ if and only if $T=W_{2n+1}$ or 
$(W_{2n+1})^\star$. 
\end{prop}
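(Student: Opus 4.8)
The plan is to prove Proposition~\ref{prop_W2n+1} by showing both implications separately, using the machinery of the primality graph together with Lemma~\ref{lem Boud Ille} and the already-established Propositions~\ref{prop_T2n+1} and \ref{prop_U2n+1}. First I would deal with the easy direction: assuming $T=W_{2n+1}$ (or its dual), one verifies directly from the defining construction of $W_{2n+1}$ — reverse all arcs between $2n$ and the even elements of $\{0,\dots,2n-1\}$ in the linear order $L_{2n+1}$ — that $T$ is prime and critical, and that the primality graph behaves as claimed. Concretely, I would compute that $2n$ is isolated in $\mathscr{P}(T)$, i.e. $T-\{2n,w\}$ is decomposable for every $w$, and that $T-2n$ is the linear order $L_{2n}$ on $\{0,\dots,2n-1\}$ whose primality graph is empty, whereas on the remaining vertices the deletion pairs $T-\{v,w\}$ that stay prime are exactly those forming a path $P_{2n-1}$; here I can lean on the known structure of $\mathscr{P}(U_{2n-1})$ from Proposition~\ref{prop_U2n+1}, since $W_{2n+1}$ restricted away from $2n$ is closely related to a $U$-type tournament. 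The duality statement is free because $\mathscr{P}(T)=\mathscr{P}(T^\star)$ and criticality is self-dual.

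For the converse, I would start from the hypotheses that $T$ is critical, $\mathscr{P}(T)-2n=P_{2n-1}$, and $N_{\mathscr{P}(T)}(2n)=\emptyset$, and reconstruct $T$ up to isomorphism and duality. Since $d_{\mathscr{P}(T)}(2n)=0$, Lemma~\ref{lem Boud Ille} does not apply to $2n$ directly, so instead I would look at $T-2n$: it is a tournament on $2n$ vertices all of whose one-vertex deletions must be decomposable as well (using criticality of $T$ and the fact that $2n$ is isolated, a deletion $T-v$ for $v\ne 2n$ that were prime would force $v(2n)\in E(\mathscr{P}(T))$ for some neighbour), and more importantly $\mathscr{P}(T-2n)$ contains the path $P_{2n-1}$. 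By Propositions~\ref{prop_T2n+1} and \ref{prop_U2n+1} applied to $T-2n$ (whose vertex count is even, so it is not critical but its prime induced subtournaments are controlled), I would pin down $T-2n$ as a linear order or a $U$-type configuration; the parity constraint $v(T)=2n+1$ odd is then forced. Finally I would recover how $2n$ attaches: the condition $N_{\mathscr{P}(T)}(2n)=\emptyset$ says every pair $\{2n,v\}$ has decomposable complement, which severely restricts the arc pattern between $2n$ and $\{0,\dots,2n-1\}$, and combined with primality of $T$ itself this should leave exactly the pattern of $W_{2n+1}$ (or its dual).

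The main obstacle I expect is the converse's final step: deducing the precise adjacency of the isolated vertex $2n$ from purely "primality-graph" information. Knowing that $N_{\mathscr{P}(T)}(2n)=\emptyset$ is a statement about many two-vertex deletions, and translating each "$T-\{2n,v\}$ is decomposable" into a usable local constraint on arcs requires identifying, for each $v$, the nontrivial module of $T-\{2n,v\}$ and then arguing that $2n$ must respect or violate it in a controlled way. A clean approach is to fix the known structure of $T-2n$ first (say $T-2n\cong L_{2n}$ after reindexing), parametrise the possible arc sets from $2n$ by the set $S=\{v: v\to 2n\}\subseteq\{0,\dots,2n-1\}$, and then show that primality of $T$ plus $N_{\mathscr{P}(T)}(2n)=\emptyset$ forces $S$ to be exactly the even vertices (or exactly the odd vertices, giving the dual). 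This is essentially a finite case analysis on how an interval of $L_{2n}$ can be "broken" by adding one vertex, and I would organise it by observing that any interval $I$ of $L_{2n}$ of size $\ge 2$ that is not split by $2n$ remains a module of $T-\{2n,v\}$ whenever $v\notin I$, so $2n$ must split every such interval, which pins $S$ down to an alternating pattern.
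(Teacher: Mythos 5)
This proposition is quoted from \cite{BI09} (Proposition~21) and the paper contains no proof of it, so your proposal can only be judged on its own merits. The forward direction of your sketch is essentially a finite verification and is fine in spirit, although the appeal to $U$-type structure is off target: $W_{2n+1}-2n=L_{2n}$ is a linear order, and the prime two-vertex deletions $W_{2n+1}-\{v,v+1\}$ with $v,v+1\leq 2n-1$ are copies of $W_{2n-1}$, not of $U_{2n-1}$, so Proposition~\ref{prop_U2n+1} does not help there (but Theorem~\ref{thm _Critical_ST} does). The genuine gap is in the converse, at precisely the step that carries all the weight: you propose to ``pin down $T-2n$'' by applying Propositions~\ref{prop_T2n+1} and~\ref{prop_U2n+1} to $T-2n$, but those propositions concern critical tournaments of odd order and say nothing about $T-2n$, which has even order, is not critical, and need not even be prime (in the target case $T-2n=L_{2n}$ is decomposable). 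You also conflate $\mathscr{P}(T)-2n$ (an induced subgraph of $\mathscr{P}(T)$, whose edges record primality of $T-\{v,w\}$, a tournament that still contains $2n$) with $\mathscr{P}(T-2n)$, which is not even defined here. So the assertion that $T-2n$ is a linear order is unsupported, and without it the final ``attachment of $2n$'' analysis has nothing to stand on. A secondary error in that analysis: with $T-2n=L_{2n}$ fixed, ``$S=$ odd vertices'' does not give $(W_{2n+1})^\star$ (whose restriction to $\{0,\ldots,2n-1\}$ is the reversed order); it gives a decomposable tournament, e.g.\ $\{1,\ldots,2n-2\}\cup\{2n\}$ is then a module, which also shows that splitting every interval of $L_{2n}$ is not sufficient for primality — modules of the form $I\cup\{2n\}$ must be excluded as well.

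Two workable repairs. Either reconstruct the arcs directly from the module information that Lemma~\ref{lem Boud Ille} attaches to each vertex of the path (degree~$1$ and degree~$2$ cases), in the style of the proof of Proposition~\ref{Pcritical_T2n+1} given in this paper for the circular hypergraph case; this is presumably close to the argument of \cite{BI09}. Or argue at a higher level: by Theorem~\ref{thm _Critical_ST}, $T$ is isomorphic to $T_{2n+1}$, $U_{2n+1}$ or $W_{2n+1}$; since $\mathscr{P}$ is an isomorphism invariant, Propositions~\ref{prop_T2n+1} and~\ref{prop_U2n+1} (applied to $T$ itself, not to $T-2n$) exclude the first two types because their primality graphs have no isolated vertex, whereas $N_{\mathscr{P}(T)}(2n)=\emptyset$; finally, any isomorphism from $T$ onto $W_{2n+1}$ induces an automorphism of the labelled graph ``path on $\{0,\ldots,2n-1\}$ plus the isolated vertex $2n$'', and the only such automorphisms are the identity and the reversal of the path, which yield $T=W_{2n+1}$ and $T=(W_{2n+1})^\star$ respectively. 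Your current draft contains neither route, so as written the converse does not go through.
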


\section{Proof of Theorem~\ref{thm_1_main}}

The purpose of this section is to characterize the non circular and critical 3-hypergraphs. 
We use the following notation. 

\begin{nota}\label{equiv}
Let $H$ be a 3-hypergraph. 
\begin{equation*}
\text{For}\ e,f\in\binom{V(H)}{3}, \ e\equiv_H f\ \text{means}\ 
\begin{cases}
e,f\in E(H)\\
\text{or}\\
e,f\not\in E(H).
\end{cases}
\end{equation*}
\end{nota}

An analogue of lemma~\ref{lem Boud Ille} for prime and 3-uniform hypergraphs follows. 
For a proof, see \cite[Lemma~10]{BI09}. 

\begin{lem}\label{Lcritical}
Let $H$ be a critical and 3-hypergraph with $v(H)\geq 5$. 
For every $v\in V(H)$, we have $d_{\mathscr{P}(H)}(v)\leq 2$. 
Moreover, the next two assertions hold. 
\begin{enumerate}
\item Given $v\in V(H)$, if $d_{\mathscr{P}(H)}(v)=1$, then 
$V(H)\setminus(\{v\}\cup N_{\mathscr{P}(H)}(v))$ is a module of $H-v$. 
\item Given $v\in V(H)$, if $d_{\mathscr{P}(H)}(v)=2$, then 
$N_{\mathscr{P}(H)}(v)$ is a module of $H-v$. 
\end{enumerate}
\end{lem}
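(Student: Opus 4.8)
The statement is an exact analogue of Lemma~\ref{lem Boud Ille} for critical $3$-uniform hypergraphs, and the plan is to mirror the tournament argument while replacing ``module of a tournament'' by ``module of a $3$-hypergraph'' in the sense of Definition~\ref{defi_module_hyper}. First I would fix a critical $3$-hypergraph $H$ with $v(H)\geq 5$ and a vertex $v\in V(H)$. Since $H$ is critical, $H-v$ is decomposable, so it has a nontrivial module $M$; this $M$ will be the module produced in assertions (1) and (2), and the whole argument is about pinning down $M$ in terms of the neighbourhood of $v$ in $\mathscr{P}(H)$. The key observation, exactly as in the tournament case, is the following: if $w\in V(H)\setminus\{v\}$ is such that $vw\in E(\mathscr{P}(H))$, i.e.\ $H-\{v,w\}$ is prime, then $M\setminus\{w\}$ must be a trivial module of $H-\{v,w\}$ (a module of an induced subhypergraph obtained by removing vertices outside the module), hence $M\setminus\{w\}$ has at most one element, forcing $|M|\leq 2$ with $w\in M$ when $|M|\geq 2$. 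So every $\mathscr{P}(H)$-neighbour of $v$ either lies in $M$ (when $M$ has two elements) or... — and here one has to be careful — when $|M|=2$, say $M=\{a,b\}$, removing $v$ and a neighbour $w\neq a,b$ leaves $M$ itself as a nontrivial module, contradiction; so $N_{\mathscr{P}(H)}(v)\subseteq M$, whence $d_{\mathscr{P}(H)}(v)\leq |M|\leq 2$. This already gives $d_{\mathscr{P}(H)}(v)\leq 2$.

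For the two refined assertions one argues by the size of $N_{\mathscr{P}(H)}(v)$. If $d_{\mathscr{P}(H)}(v)=2$, the argument above forces $|M|=2$ and $N_{\mathscr{P}(H)}(v)=M$, which is precisely assertion~(2); one must additionally check that $M$ does not depend on the choice of nontrivial module, but the inclusion $N_{\mathscr{P}(H)}(v)\subseteq M$ together with $|M|\leq 2 = |N_{\mathscr{P}(H)}(v)|$ forces equality, so $N_{\mathscr{P}(H)}(v)$ is the unique nontrivial module up to the usual ambiguity, and in particular it is a module of $H-v$. For assertion~(1), suppose $d_{\mathscr{P}(H)}(v)=1$, say $N_{\mathscr{P}(H)}(v)=\{w\}$. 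Then $H-\{v,w\}$ is prime but $H-\{v,x\}$ is decomposable for every $x\in V(H)\setminus\{v,w\}$. The target module is $N:=V(H)\setminus(\{v\}\cup\{w\})=V(H-v)\setminus\{w\}$, and I would show $N$ is a module of $H-v$ by a counting/extremality argument: take a nontrivial module $M$ of $H-v$; by the key observation $M\setminus\{w\}$ is trivial in $H-\{v,w\}$, hence $|M\setminus\{w\}|\leq 1$; if $M\subseteq\{w,x\}$ for some single $x$, then since $H-\{v,x\}$ is decomposable one produces by a transitivity/overlap argument (modules of $3$-hypergraphs behave well under union when they overlap — this is the substitute for the classical module lattice lemmas, which the authors presumably have available from \cite{BCIZ20,BCIZ}) a larger module, and iterating one reaches $N$ itself. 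The cleanest route is: show that for each $x\neq v,w$ the set $\{w,x\}$ or $V(H-v)\setminus\{x\}$ fails to be forced, so the ``large'' module $N$ is the only possibility consistent with $H-\{v,x\}$ being decomposable for all $x\neq w$.

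\textbf{Main obstacle.} The delicate point is the module-calculus for $3$-hypergraphs: unlike tournaments, the definition of module here is asymmetric (Definition~\ref{defi_module_hyper} singles out one vertex $m$ per crossing edge), so the standard facts — that two overlapping modules have union, intersection, and symmetric difference modules, and that a module of $H$ restricts to a module of $H[W]$ for $W\supseteq M$ — need to be invoked exactly as the authors have set them up, and one must verify that the closure under removing outside vertices (used pervasively above) really holds. Granting those structural lemmas (which the paper develops in its companion works and presumably recalls before this point), the proof is a faithful transcription of Ille's tournament argument; without them, reconstructing the right ``overlap'' lemma for this asymmetric notion of module is the crux. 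I would therefore first state precisely the module-hereditary and module-overlap properties I need, cite them, and then run the size argument above, treating the cases $d=0$ (vacuous), $d=1$, $d=2$ in turn.
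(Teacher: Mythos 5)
The paper itself gives no argument for Lemma~\ref{Lcritical}: it simply defers to the proof of \cite[Lemma~10]{BI09}, so your reconstruction has to stand on its own. Your overall strategy is the intended one (restrict a nontrivial module $M$ of $H-v$ to the prime hypergraph $H-\{v,w\}$ for a neighbour $w$ of $v$ in $\mathscr{P}(H)$, and use that its modules are trivial), and the hereditary fact you worry about does hold for Definition~\ref{defi_module_hyper}: if $M$ is a module of $H-v$ and $W\subseteq V(H)\setminus\{v\}$, then $M\cap W$ is a module of $(H-v)[W]$. The first genuine flaw is in your ``key observation'': a trivial module of the prime hypergraph $H-\{v,w\}$ is $\emptyset$, a singleton, \emph{or the whole vertex set} $V(H)\setminus\{v,w\}$. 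So triviality of $M\setminus\{w\}$ only yields the dichotomy $M=\{w,x\}$ for some $x$, or $M=V(H)\setminus\{v,w\}$ (the latter because $M\neq V(H)\setminus\{v\}$). Your conclusions ``$|M|\leq 2$'', hence ``$N_{\mathscr{P}(H)}(v)\subseteq M$ and $d_{\mathscr{P}(H)}(v)\leq 2$'', therefore do not follow as written; and the omitted alternative is precisely the module that assertion~(1) asserts to exist, so it cannot be discarded. The degree bound and assertion~(2) are repairable: if $M=V(H)\setminus\{v,w\}$ for a neighbour $w$ and $w'$ were a second neighbour, the dichotomy applied to $w'$ fails because $w'\in M$ and $|M|\geq 3$ (here $v(H)\geq 5$ is used); hence when $d_{\mathscr{P}(H)}(v)=2$ every nontrivial module of $H-v$ must be a pair containing both neighbours, i.e.\ equals $N_{\mathscr{P}(H)}(v)$, and no vertex can have three neighbours.

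The larger gap is assertion~(1), for which you give no actual proof. Even with the corrected dichotomy, when $N_{\mathscr{P}(H)}(v)=\{w\}$ you must exclude the possibility that the only nontrivial modules of $H-v$ are pairs $\{w,x\}$; your ``transitivity/overlap argument, iterate to reach $N$'' does not do this. The union-of-overlapping-modules property does hold for this asymmetric notion of module, but it is not established anywhere in this paper, so you would have to state and prove it; and even granting it, two distinct pairs $\{w,x\}$, $\{w,x'\}$ only give a nontrivial $3$-element module containing $w$, contradicting the dichotomy, which leaves the case of a single pair module $\{w,x_0\}$ untouched. Excluding that case needs a further argument (for instance exploiting that $H$ itself is prime and that $H-\{v,x_0\}$ is decomposable because $x_0\notin N_{\mathscr{P}(H)}(v)$). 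Note also that $V(H)\setminus\{v,w\}$ is a module of $H-v$ exactly when no edge of $H-v$ contains $w$, i.e.\ every edge through $w$ contains $v$; this shows assertion~(1) is a strong structural statement and is really the heart of the lemma, and your proposal leaves it unproved.
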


Given a critical 3-hypergraph $H$, it follows from Lemma~\ref{Lcritical} that the components of 
$\mathscr{P}(H)$ are cycles or paths.

\begin{prop}\label{Pcritical_T2n+1}
Let $H$ be a critical 3-hypergraph defined on $\{0,\ldots,p-1\}$, where $p\geq 5$. 
If there exists $k\in\{3,\ldots,p\}$ such that $\mathscr{P}(H)[\{0,\ldots,k-1\}]=C_k$, then 
$p=2n+1$, $k=p$, and $H=C_3(T_{2n+1})$, where $n\geq 2$. 
\end{prop}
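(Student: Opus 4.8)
The plan is to reduce the statement to the analogous classification for tournaments, Proposition~\ref{prop_T2n+1}, by exploiting the interplay between primality of a 3-hypergraph and primality of its realizations (Theorem~\ref{thm_same_primality}). First I would argue that $H$ is realizable. The hypothesis $\mathscr{P}(H)[\{0,\dots,k-1\}]=C_k$ means that, writing $X=\{0,\dots,k-1\}$, every subhypergraph $H[X\setminus\{i,j\}]$ obtained from $H[X]$ by deleting a consecutive pair along the cycle is prime; running this repeatedly down to a small induced subhypergraph, and recalling that a cycle $C_k$ has a perfect-matching-like structure on odd $k$, I would show $H[X]$ is prime and that in fact $H[X]\cong C_3(T_k)$ with $k$ odd, by an induction on $k$ using Proposition~\ref{Pcritical_T2n+1} for smaller sizes together with the fact that the only primality graphs that are cycles of odd length come from the tournaments $T_{k}$ and $(T_k)^\star$ (Proposition~\ref{prop_T2n+1}). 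Since $T_k$ and $(T_k)^\star$ have the same $C_3$-structure, $H[X]$ is determined up to isomorphism; in particular $H[X]$ is realizable and prime, forcing $k$ odd.

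Next I would show that $X=V(H)$, i.e. $k=p$. Suppose not, so there is $v\in V(H)\setminus X$. Apply Theorem~\ref{prop_Eh-2} with this $X$: since $H$ is prime, there exist $v,w\in(V(H)\setminus X)\cup\underline{X}$ with $H-\{v,w\}$ prime. But $H$ is critical, so $H-\{v,w\}$ must be decomposable once $v=w$ is impossible, hence $v\neq w$ and the pair $\{v,w\}$ is an edge of $\mathscr{P}(H)$. I would then use Lemma~\ref{Lcritical}: each vertex of $\mathscr{P}(H)$ has degree at most $2$, and the component of $\mathscr{P}(H)$ containing the cycle $C_k$ on $X$ is already a full cycle, so no vertex of $X$ can have a neighbor outside $X$ in $\mathscr{P}(H)$, and no vertex of $X$ lies in $\underline{X}$ beyond what the cycle structure allows. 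Combining this with the structural description of $p_{(H,X)}$ and the definition of $\underline{X}$, I would derive that $(V(H)\setminus X)\cup\underline{X}$ cannot supply an edge of $\mathscr{P}(H)$ unless $V(H)=X$, a contradiction. This pins down $k=p$, hence $p$ is odd, say $p=2n+1$, and $H=H[X]\cong C_3(T_{2n+1})$; the bound $p\geq 5$ gives $n\geq 2$.

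The main obstacle I expect is the second step — ruling out $V(H)\setminus X\neq\emptyset$. The difficulty is that Theorem~\ref{prop_Eh-2} only guarantees the existence of \emph{some} prime $H-\{v,w\}$ with $v,w\in(V(H)\setminus X)\cup\underline{X}$, and translating this into a contradiction with criticality requires a careful analysis of which vertices actually lie in $\underline{X}$ when $H[X]=C_3(T_k)$. For this I would compute $X_H(y)$ for $y\in X$ directly from the $C_3(T_k)$ structure, showing that $\underline{X}$ is empty (or at most consists of the two "endpoint-like" vertices, which in the cyclic case is still empty because $T_{2n+1}$ is vertex-transitive-ish under the relevant symmetry), so that any such $v,w$ lie entirely outside $X$; then $H-\{v,w\}$ prime with $X\subseteq V(H-\{v,w\})$ and $\mathscr{P}(H-\{v,w\})\supseteq C_k$ would let me invoke the induction hypothesis / Proposition~\ref{prop_T2n+1} on $H-\{v,w\}$ and build an explicit nontrivial module of $H$ from the cycle, contradicting primality of $H$. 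The remaining steps — the realizability and the identification $H[X]\cong C_3(T_{2n+1})$ — are comparatively routine given Theorems~\ref{thm_same_primality}, \ref{thm _Critical_ST} and Proposition~\ref{prop_T2n+1}.
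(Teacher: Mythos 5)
Your proposal rests on a misreading of the hypothesis that breaks the whole first step. An edge $ij$ of $\mathscr{P}(H)$ means that $H-\{i,j\}$ — the \emph{whole} hypergraph with two vertices removed, which still contains every vertex of $V(H)\setminus X$ — is prime; it does \emph{not} say that $H[X\setminus\{i,j\}]$ is prime. So the hypothesis $\mathscr{P}(H)[\{0,\ldots,k-1\}]=C_k$ gives you no direct access to the internal structure of $H[X]$, and the claims ``$H[X]$ is prime'' and ``$H[X]\cong C_3(T_k)$'' are only available \emph{after} one knows $X=V(H)$, which is precisely what has to be proved. Two further points make the first step illegitimate as written: the ``induction on $k$ using Proposition~\ref{Pcritical_T2n+1} for smaller sizes'' is an appeal to the very statement being proved, and Proposition~\ref{prop_T2n+1} is a statement about tournaments, so invoking it presupposes realizability of (a subhypergraph of) $H$. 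Realizability is not a hypothesis here — it is a consequence of the conclusion $H=C_3(T_{2n+1})$ — and the whole point of this proposition in the paper is to treat critical 3-hypergraphs that are a priori \emph{not} known to be realizable, so the reduction to Theorem~\ref{thm_same_primality} and the tournament classification cannot be the engine of the proof.

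The paper's argument is instead entirely elementary and in a different order, and your second step also leaves the key difficulty unresolved. Since every vertex of the cycle has degree $2$ in $\mathscr{P}(H)$, Lemma~\ref{Lcritical} makes each pair $\{i-1,i+1\}$ a module of $H-i$; from these pair modules the paper first shows $k$ is odd (if $k$ were even, $\{0,2\}$ would be a module of $H$, contradicting primality), and then shows that any edge $e$ meeting both $X$ and $V(H)\setminus X$ meets $X$ in exactly one vertex, which can be replaced by any other vertex of $X$ — that is, $X$ itself is a module of $H$, so primality forces $V(H)=X$ and $k=p$. Your alternative route to $k=p$ via Theorem~\ref{prop_Eh-2} needs $H[X]$ prime (not established), and even granting it, a pair $v,w$ outside $X$ with $H-\{v,w\}$ prime is not by itself contradictory: the promised ``explicit nontrivial module built from the cycle'' is exactly the missing content, and it is what the paper supplies directly. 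Finally, the identification $H=C_3(T_{2n+1})$ is also done without realizability: the pair modules exclude all edges containing two vertices of equal parity, the shift $x\mapsto x+1 \bmod (2n+1)$ is verified to be an automorphism, and primality provides one edge, which then propagates to all of $E(C_3(T_{2n+1}))$.
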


\begin{proof}
Suppose that there exists $k\in\{3,\ldots,p\}$ such that $\mathscr{P}(H)[\{0,\ldots,k-1\}]=C_k$. 
First, we show that 
\begin{equation}\label{E0_Pcritical}
\text{$k$ is odd.}
\end{equation} 
Otherwise, there exists $l\geq 2$ such that $\mathscr{P}(H)[\{0,\ldots,2l-1\}]=C_{2l}$. 
We verify that $\{0,2\}$ is a module of $H$. 
Consider $e\in E(H)$ such that $e\cap\{0,2\}\neq\emptyset$ and 
$e\setminus\{0,2\}\neq\emptyset$. 
Suppose for a contradiction that $0,2\in e$. 
There exists $i\in\{1\}\cup\{3,\ldots,2l-1\}$ such that $e=02i$. 
Since $N_{\mathscr{P}(H)}(1)=\{0,2\}$, it follows from Lemma~\ref{Lcritical} that $\{0,2\}$ is a module of $H-1$. 
It follows that $i=1$, that is, $e=012$. 
It follows from Lemma~\ref{Lcritical} that $01(2l-2)\in E(H)$, which contradicts the fact that 
$\{0,2l-2\}$ is a module of $H-(2l-1)$. 
Consequently, $|e\cap\{0,2\}|=1$. 
For a contradiction, suppose that $1\in e$. 
Since $|e\cap\{0,2\}|=1$, there $j\in\{3,\ldots,2l-1\}$ such that $e=01j$ or $12j$. 
Denote by $j'$ the unique element of  $\{2l-2,2l-1\}$ such that $j'\equiv j\mod 2$. 
It follows from Lemma~\ref{Lcritical} that 
$(e\setminus\{j\})\cup\{j'\}\in E(H)$. 
Similarly, by denoting by $j''$ the unique element of  $\{3,4\}$ such that $j''\equiv j\mod 2$, we obtain $(e\setminus\{j\})\cup\{j''\}\in E(H)$. 
We distinguish the following two cases. 
\begin{enumerate}
\item Suppose that $e=01j$. 
If $j$ is even, then $01(2l-2)\in E(H)$, which contradicts the fact that $\{0,2l-2\}$ is a module of $H-(2l-1)$. 
If $j$ is odd, then $013\in E(H)$, which contradicts the fact that $\{1,3\}$ is a module of $H-2$. 
\item Suppose that $e=12j$. 
If $j$ is even, then $124\in E(H)$, which contradicts the fact that $\{2,4\}$ is a module of $H-3$. 
If $j$ is odd, then $12(2l-1)\in E(H)$, which contradicts the fact that $\{1,2l-1\}$ is a module of $H-0$. 
\end{enumerate}
It follows that $1\not\in e$. So, $e\in E(H-1)$. 
Since $N_{\mathscr{P}(H)}(1)=\{0,2\}$, it follows from Lemma~\ref{Lcritical} that 
$\{0,2\}$ is a module of $H-1$. 
Thus, there exists $i\in\{0,2\}$ such that $e\cap\{0,2\}=\{i\}$, and 
$(e\setminus\{i\})\cup\{i'\}\in E(H)$ for each $i'\in\{0,2\}$. 
Consequently, $\{0,2\}$ is a module of $H$, which contradicts the fact that $H$ is prime. 
It follows that \eqref{E0_Pcritical} holds. 
Set $k=2n+1$, where $n\geq 1$. 

Second, we prove that $\{0,\ldots,2n\}$ is a module of $H$. 
Consider $e\in E(H)$ such that $e\cap \{0,\ldots,2n\}\neq\emptyset$ and 
$e\setminus \{0,\ldots,2n\}\neq\emptyset$. 
We prove that 
\begin{equation}\label{E2_Pcritical}
|e\cap \{0,\ldots,2n\}|=1.
\end{equation}
Otherwise, we have $|e\cap \{0,\ldots,2n\}|=2$. 
There exist $i,j\in\{0,\ldots,2n\}$, with $i<j$, such that $e\cap \{0,\ldots,2n\}=\{i,j\}$. 
Denote by $i'$ the unique element of $\{0,1\}$ such that $i'\equiv i\mod 2$. 
As previously, we obtain $(e\setminus\{i\})\cup\{i'\}\in E(H)$. 
Denote by $j'$ the unique element of $\{2,3\}$ such that $j'\equiv j\mod 2$. 
We obtain $(e\setminus\{i,j\})\cup\{i',j'\}\in E(H)$. 
Set $e'=(e\setminus\{i,j\})\cup\{i',j'\}$. 
Observe that $e'\setminus\{0,\ldots,2n\}=e\setminus\{0,\ldots,2n\}$, and denote by $v$ the unique element of $e'\setminus\{0,\ldots,2n\}$. 
If $i'=0$ and $j'=2$, then $02v\in E(H)$, which contradicts the fact that $\{0,2\}$ is a module of $H-1$. 
If $i'=1$ and $j'=3$, then $13v\in E(H)$, which contradicts the fact that $\{1,3\}$ is a module of $H-2$. 
Suppose that $i'=0$ and $j'=3$. 
We get $03v\in E(H)$, and hence $0(2n-1)v\in E(H)$, which contradicts the fact that $\{0,2n-1\}$ is a module of $H-(2n)$. 
Lastly, if $i'=1$ and $j'=4$, then $1(2n)v\in E(H)$, which contradicts the fact that $\{1,2n\}$ is a module of $H-0$. 
It follows that \eqref{E2_Pcritical} holds. 
Denote by $i$ the unique element of $e\cap\{0,\ldots,2n\}$. 
For every $l\in\{1,\ldots,n\}$, we obtain 
\begin{equation}\label{E3_Pcritical}
(e\setminus\{i\})\cup\{i+2l\}\in E(H),
\end{equation}
where $i+2l$ is considered modulo $2n+1$. 
In particular, we have $(e\setminus\{i\})\cup\{i+2n\}\in E(H)$, that is, 
$(e\setminus\{i\})\cup\{i-1\}\in E(H)$. 
Since $\{i-1,i+1\}$ is a module of $H-i$, we get $(e\setminus\{i\})\cup\{i+1\}\in E(H)$. 
For each $m\in\{0,\ldots,n-1\}$, we obtain 
\begin{equation}\label{E4_Pcritical}
(e\setminus\{i\})\cup\{i+2m+1\}\in E(H).
\end{equation}
It follows from \eqref{E3_Pcritical} and \eqref{E4_Pcritical} that 
$(e\setminus\{i\})\cup\{i'\}\in E(H)$ for every $i'\in\{0,\ldots,2n\}$. 
Consequently, $\{0,\ldots,2n\}$ is a module of $H$. 
Since $H$ is prime, we obtain $V(H)=\{0,\ldots,2n\}$. 

Third, we prove that $H=C_3(T_{2n+1})$. 
We have 
\begin{align}\label{E5_Pcritical}
E(C_3(T_{2n+1}))=&\{(2i)(2l+1)(2j):0\leq i\leq l<j\leq n\}\nonumber\\
&\cup\{(2i+1)(2l)(2j+1):0\leq i<p\leq l\leq n-1\}.
\end{align}
For a contradiction, suppose that there are $x,y,z\in\{0,\ldots,2n\}$ such that 
$x<y<z$, $x\equiv y\mod 2$, and $xyz\in E(H)$. 
It follows from Lemma~\ref{Lcritical} that 
$(y-2)yz\in E(H)$, which contradicts the fact that $\{y-2,y\}$ is a module of $H-(y-1)$. 
Hence, 
$xyz\not\in E(H)$ for $x,y,z\in\{0,\ldots,2n\}$ such that 
$x<y<z$ and $x\equiv y\mod 2$. 
Similarly, 
$xyz\not\in E(H)$ for $x,y,z\in\{0,\ldots,2n\}$ such that 
$x<y<z$ and $y\equiv z\mod 2$. 
It follows that 
\begin{equation}\label{E8_Pcritical}
E(H)\subseteq E(C_3(T)).
\end{equation}
Now, consider $x,y,z\in\{0,\ldots,2n\}$ such that 
$x<y<z$, $x\not\equiv y\mod 2$, and $y\not\equiv z\mod 2$. 
It follows from Lemma~\ref{Lcritical} that 
\begin{equation}\label{E9_Pcritical}
xyz\equiv_Hx(x+1)(x+2).
\end{equation}
Now, we prove that the permutation 
\begin{equation*}
\begin{array}{rrcl}
\theta:&\{0,\ldots,2n\}&\longrightarrow&\{0,\ldots,2n\}\\
&x&\longmapsto&x+1\mod 2n+1
\end{array}
\end{equation*}
of $\{0,\ldots,2n\}$ is an automorphism of $H$. 
Given $x,y,z\in\{0,\ldots,2n\}$ such that $x<y<z$, we have to verify that 
$xyz\equiv_H(x+1)(y+1)(z+1)$. 
We have $$xyz\equiv_Hxy(z+2)\equiv_Hx(y+2)(z+2)\equiv_H(x+2)(y+2)(z+2).$$
Thus, 
$xyz\equiv_H(x+2n+2)(y+2n+2)(z+2n+2)$, that is, 
$$xyz\equiv_H(x+1)(y+1)(z+1).$$
Consequently, $\theta$ is an automorphism of $H$. 
Since $H$ is prime, there exist 
$u,v,w\in\{0,\ldots,2n\}$ such that $u<v<w$ and 
$uvw\in E(H)$. 
By \eqref{E8_Pcritical}, $uvw\in E(C_3(T))$. 
It follows from \eqref{E5_Pcritical} that $u\not\equiv v\mod 2$ and 
$v\not\equiv w\mod 2$. 
It follows from \eqref{E9_Pcritical} that 
$u(u+1)(u+2)\in E(H)$. 
Since $\theta$ is an automorphism of $H$, we obtain 
\begin{equation}\label{E10_Pcritical}
012\in E(H).
\end{equation}
Finally, consider $x,y,z\in\{0,\ldots,2n\}$ such that $x<y<z$ and $xyz\in E(C_3(T))$. 
By \eqref{E5_Pcritical}, 
$x\not\equiv y\mod 2$ and $y\not\equiv z\mod 2$. 
It follows from \eqref{E9_Pcritical} that $xyz\equiv_Hx(x+1)(x+2)$. 
Since $\theta$ is an automorphism of $H$, we obtain $x(x+1)(x+2)\equiv_H012$. 
Therefore, $xyz\equiv_H012$. 
By \eqref{E10_Pcritical}, $xyz\in E(H)$. 
Consequently, we obtain $E(C_3(T))\subseteq E(H)$. 
It follows from \eqref{E8_Pcritical} that $H=C_3(T)$. 
\end{proof}

The next corollary is an easy consequence of Lemma~\ref{Lcritical} and Proposition~\ref{Pcritical_T2n+1}. 

\begin{cor}\label{C_critical_T2n+1}
Given a critical 3-hypergraph $H$, with $v(H)\geq 5$, 
$H$ is not circular if and only if all the components of $\mathscr{P}(H)$ are paths. 
\end{cor}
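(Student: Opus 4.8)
The plan is to characterize circularity of a critical 3-hypergraph $H$ with $v(H)\geq 5$ via the shape of its primality graph $\mathscr{P}(H)$, using Corollary~\ref{cor_reali_critical}, Proposition~\ref{Pcritical_T2n+1}, and Lemma~\ref{Lcritical}. Recall that by Lemma~\ref{Lcritical} every component of $\mathscr{P}(H)$ is a path or a cycle, so the dichotomy "all components are paths'' versus "some component is a cycle'' is exhaustive, and it suffices to show that $H$ is circular if and only if $\mathscr{P}(H)$ has a cycle component.

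\textbf{The ``only if'' direction.} Suppose $H$ is circular, so by Definition~\ref{defi_reali_critical} $v(H)=2n+1$ is odd and $H\cong C_3(T_{2n+1})$. Here I would invoke the known structure of the primality graph of $T_{2n+1}$: by Proposition~\ref{prop_T2n+1}, $\mathscr{P}(T_{2n+1})=C_{2n+1}$, and since $H$ is a $C_3$-structure of a prime tournament, Theorem~\ref{thm_same_primality} (applied to induced subtournaments) gives that primality of $H-\{v,w\}$ is equivalent to primality of $T_{2n+1}-\{v,w\}$, whence $\mathscr{P}(H)=\mathscr{P}(C_3(T_{2n+1}))=\mathscr{P}(T_{2n+1})=C_{2n+1}$. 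In particular $\mathscr{P}(H)$ is a single cycle component, so it is false that all components of $\mathscr{P}(H)$ are paths. Contrapositively: if all components of $\mathscr{P}(H)$ are paths, then $H$ is not circular.

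\textbf{The ``if'' direction.} Suppose $H$ is not circular; I must show every component of $\mathscr{P}(H)$ is a path. Assume for contradiction that some component of $\mathscr{P}(H)$ is a cycle $C_k$ on a vertex subset $W\subseteq V(H)$ with $|W|=k\geq 3$. After relabelling the vertices of $H$ so that $W=\{0,\dots,k-1\}$ with $\mathscr{P}(H)[W]=C_k$ (this is legitimate since isomorphic relabelling preserves primality, criticality, and the primality graph up to isomorphism), Proposition~\ref{Pcritical_T2n+1} applies and yields $v(H)=2n+1$, $k=2n+1$, and $H=C_3(T_{2n+1})$ with $n\geq 2$. But then $H$ is circular by Definition~\ref{defi_reali_critical}, a contradiction. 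Hence no component of $\mathscr{P}(H)$ is a cycle, so by Lemma~\ref{Lcritical} all components are paths.

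\textbf{Main obstacle.} The delicate point is the ``only if'' direction, specifically justifying $\mathscr{P}(C_3(T))=\mathscr{P}(T)$ for a critical tournament $T$: this requires that $C_3(T)-\{v,w\}=C_3(T-\{v,w\})$ (immediate from Definition~\ref{C_3}, since which triples form $3$-cycles is determined locally) together with Theorem~\ref{thm_same_primality} applied to the realizable induced subhypergraph $C_3(T-\{v,w\})$ realized by $T-\{v,w\}$. One must also note $v(T)\geq 5$ so that the primality graphs are defined. Everything else is bookkeeping: the path/cycle dichotomy is Lemma~\ref{Lcritical}, and the conversion of a cycle component into circularity is exactly Proposition~\ref{Pcritical_T2n+1}.
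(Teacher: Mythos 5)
Your proposal is correct and follows essentially the same route as the paper: Lemma~\ref{Lcritical} for the path/cycle dichotomy, Theorem~\ref{thm_same_primality} together with Proposition~\ref{prop_T2n+1} to identify $\mathscr{P}(H)$ with the cycle $C_{v(H)}$ when $H$ is circular, and Proposition~\ref{Pcritical_T2n+1} (after relabelling) to convert a cycle component back into circularity. Your extra remark that $C_3(T)-\{v,w\}=C_3(T-\{v,w\})$ just makes explicit a step the paper leaves implicit.
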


\begin{proof}
Suppose that $H$ is circular. 
By Definition~\ref{defi_reali_critical}, $H$ is isomorphic to $C_3(T_{v(H)})$. 
Hence, $\mathscr{P}(H)$ is isomorphic to $\mathscr{P}(C_3(T_{v(H)}))$. 
It follows from Theorem~\ref{thm_same_primality} that 
$$\mathscr{P}(C_3(T_{v(H)}))=\mathscr{P}(T_{v(H)}).$$
By Proposition~\ref{prop_T2n+1}, $\mathscr{P}(T_{v(H)})=C_{v(H)}$. 
It follows that $\mathscr{P}(H)$ contains a cycle among its components. 

Conversely, suppose that $\mathscr{P}(H)$ contains a cycle among its components. 
Up to isomorphy, we can assume that $V(H)=\{0,\ldots,p-1\}$, where $p\geq 5$, and there 
exists $k\in\{3,\ldots,p\}$ satisfying $\mathscr{P}(H)[\{0,\ldots,k-1\}]=C_k$. 
By Proposition~\ref{Pcritical_T2n+1}, $H=C_3(T_{2n+1})$. 
Therefore, $H$ is circular. 
\end{proof}

\begin{nota}\label{nota_non_circular}
Let $H$ be a non circular and critical 3-hypergraph. 
For each $C\in\mathfrak{C}(\mathscr{P}(H))\setminus\mathfrak{C}_1(\mathscr{P}(H))$, $C$ is a path. 
Thus, there exists an isomorphism $\varphi_C$ from $P_{v(C)}$ onto $C$. 
\end{nota}

\begin{lem}\label{Lcritical_U2n+1_V2n+1}
Let $H$ be a critical and 3-hypergraph with $v(H)\geq 5$. 
Suppose that $H$ is not circular. 
\begin{enumerate}
\item Let $C\in\mathfrak{C}_{{\rm odd}}(\mathscr{P}(H))\setminus\mathfrak{C}_1(\mathscr{P}(H))$. 
For every $e\in E(H)$, if $e\cap V(C)\neq\emptyset$ and $e\setminus V(C)\neq\emptyset$, then there exists 
$i\in\{0,\ldots,w(C)\}$ (see Notation~\ref{nota_comp}) such that $e\cap V(C)=\{\varphi_C(2i)\}$. 
\item Let $C\in\mathfrak{C}_{{\rm even}}(\mathscr{P}(H))$. 
For every $e\in E(H)$, if $e\cap V(C)\neq\emptyset$ and $e\setminus V(C)\neq\emptyset$, then there exist 
$i,j\in\{0,\ldots,w(C)-1\}$ such that $i\leq j$ and 
$e\cap V(C)=\{\varphi_C(2i),\varphi_C(2j+1)\}$. 
\end{enumerate}
\end{lem}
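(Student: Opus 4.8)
The plan is to read off from Lemma~\ref{Lcritical} the local constraints that the primality graph forces on the edges of $H$, and then to \emph{slide} a hypothetical ``crossing'' edge along the path $C$ until it sits next to an endpoint, where something impossible happens. Since $H$ is non circular and critical, Corollary~\ref{C_critical_T2n+1} says $C$ is a path; put $p=v(C)$ and use the isomorphism $\varphi_C\colon P_p\to C$ of Notation~\ref{nota_non_circular} to identify the vertices of $C$ with $\varphi_C(0),\dots,\varphi_C(p-1)$. Because $C$ is a connected component of $\mathscr{P}(H)$, the $\mathscr{P}(H)$-neighbourhood of $\varphi_C(k)$ is $\{\varphi_C(k-1),\varphi_C(k+1)\}\cap V(C)$, so Lemma~\ref{Lcritical} applied to the vertices of $C$ gives three facts I would use constantly.
\begin{itemize}
\item[(P1)] For $1\le k\le p-2$, $\{\varphi_C(k-1),\varphi_C(k+1)\}$ is a module of $H-\varphi_C(k)$; as $H$ is $3$-uniform, no edge missing $\varphi_C(k)$ can contain both $\varphi_C(k-1)$ and $\varphi_C(k+1)$ (it would meet that module in two vertices and its complement in one).
\item[(P2)] For $1\le k\le p-2$, if an edge misses $\varphi_C(k)$ and contains exactly one of $\varphi_C(k-1),\varphi_C(k+1)$, then exchanging that vertex for the other one is again an edge of $H$.
\item[(P3)] $V(H)\setminus\{\varphi_C(0),\varphi_C(1)\}$ is a module of $H-\varphi_C(0)$ whose complement in $H-\varphi_C(0)$ is the single vertex $\varphi_C(1)$; as $H$ is $3$-uniform, every edge of $H$ containing $\varphi_C(1)$ contains $\varphi_C(0)$. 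Symmetrically, every edge of $H$ containing $\varphi_C(p-2)$ contains $\varphi_C(p-1)$.
\end{itemize}
Note that a (P2)-exchange replaces a vertex by one whose index has the \emph{same parity}.

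The key device is a sliding move: if an edge $e$ contains $\varphi_C(a)$ but neither $\varphi_C(a-1)$ nor $\varphi_C(a-2)$ (resp. neither $\varphi_C(a+1)$ nor $\varphi_C(a+2)$), and $a-1$ (resp. $a+1$) is an interior index of $\{0,\dots,p-1\}$, then (P2) turns $e$ into the edge with $\varphi_C(a)$ replaced by $\varphi_C(a-2)$ (resp. $\varphi_C(a+2)$). Iterating pushes the index $a$, in steps of $2$ and with its parity unchanged, toward an end of $\{0,\dots,p-1\}$; in particular an edge carrying a single vertex $\varphi_C(a)$ of $C$ and otherwise disjoint from $V(C)$ can have $a$ driven down to $1$ (when $a$ is odd) or up to $p-2$ (when $a$ is even and $p$ is even). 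A two-step variant slides a consecutive pair $\{\varphi_C(a),\varphi_C(a+1)\}$ lying in an edge: detour through $\{\varphi_C(a-2),\varphi_C(a+1)\}$ and then replace $\varphi_C(a+1)$ by $\varphi_C(a-1)$, landing the pair at $\{\varphi_C(a-2),\varphi_C(a-1)\}$ (the mirror move raises it by $2$); so such a pair can be brought to $\{\varphi_C(1),\varphi_C(2)\}$ if $a$ is odd and to $\{\varphi_C(p-3),\varphi_C(p-2)\}$ if $a$ is even. Throughout, the vertices of $e$ outside $V(C)$ are never touched.

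Now take $e\in E(H)$ with $e\cap V(C)\neq\emptyset\neq e\setminus V(C)$, so $e$ meets $V(C)$ in one or two vertices. For the first assertion $p=2w(C)+1$ is odd, $p\ge3$ (see Notation~\ref{nota_comp}). Suppose $e$ meets $V(C)$ in two vertices, $e=\{\varphi_C(a),\varphi_C(b),v\}$, $a<b$, $v\notin V(C)$. If $a\equiv b\pmod2$: slide $\varphi_C(a)$ up until $b-a=2$ and apply (P1) at $\varphi_C(a+1)$ — contradiction. If $a\not\equiv b\pmod2$: slide up until $b=a+1$, then slide the pair to $\{\varphi_C(1),\varphi_C(2)\}$ if $a$ is odd — the resulting edge contains $\varphi_C(1)$ but not $\varphi_C(0)$, contradicting (P3) — or to $\{\varphi_C(p-3),\varphi_C(p-2)\}$ if $a$ is even — the edge contains $\varphi_C(p-2)$ but not $\varphi_C(p-1)$, again contradicting (P3). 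Hence $e$ meets $V(C)$ in one vertex $\varphi_C(a)$; if $a$ were odd, slide $\varphi_C(a)$ down to $\varphi_C(1)$, and (P3) forces $\varphi_C(0)\in e$, impossible as the other two vertices of $e$ lie outside $V(C)$. So $a=2i$ with $0\le 2i\le p-1=2w(C)$, i.e. $i\in\{0,\dots,w(C)\}$. For the second assertion $p=2w(C)$ is even, $p\ge2$. If $e$ met $V(C)$ in one vertex $\varphi_C(a)$, slide it to $\varphi_C(1)$ (if $a$ odd) or $\varphi_C(p-2)$ (if $a$ even); (P3) then forces a third vertex of $e$ into $V(C)$, impossible. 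So $e\cap V(C)=\{\varphi_C(a),\varphi_C(b)\}$, $a<b$; if $a\equiv b\pmod2$ slide $\varphi_C(a)$ up to make $b-a=2$ and apply (P1), a contradiction, so $a\not\equiv b\pmod2$; finally if $a$ were odd (hence $b$ even), slide $\varphi_C(a)$ down to $\varphi_C(1)$ while $\varphi_C(b)$ stays put, and (P3) forces $\varphi_C(0)\in e$, impossible. Thus $a$ is even and $b$ is odd, $a=2i$, $b=2j+1$, and $0\le 2i<2j+1\le 2w(C)-1$ gives $i\le j$ and $i,j\in\{0,\dots,w(C)-1\}$.

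The hard part will be the bookkeeping buried in the sliding move: one must check that every pivot index created stays in the interior range $\{1,\dots,p-2\}$ — this is exactly where the inequalities on $a,b$ and the hypotheses $p\ge3$, resp. $p\ge2$, get used — and one must track the parity of the moving index, since that parity alone decides toward which endpoint the edge is pushed and hence which half of (P3) yields the contradiction. The very short paths $p=2$ and $p=3$ have at most one interior vertex; there (P3) by itself already pins down which edges may meet $V(C)$, and these cases are in any event recovered from the arguments above by reading the moving index at its extreme admissible value.
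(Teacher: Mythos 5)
Your proposal is correct and takes essentially the same route as the paper: both extract from Lemma~\ref{Lcritical} the two-element modules $\{\varphi_C(k-1),\varphi_C(k+1)\}$ at interior vertices of the path and the complement modules at its endpoints, and both use parity-preserving exchanges to slide a crossing edge along $C$ until it violates one of these module conditions. Your (P1)--(P3) and the sliding bookkeeping are exactly the steps the paper compresses into its repeated invocations of ``it follows from Lemma~\ref{Lcritical} that~\dots''.
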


\begin{proof}
For the first assertion, consider $C\in\mathfrak{C}_{{\rm odd}}(\mathscr{P}(H))\setminus\mathfrak{C}_1(\mathscr{P}(H))$. 
Let $e\in E(H)$ such that $e\cap V(C)\neq\emptyset$ and 
$e\setminus V(C)\neq\emptyset$. 
We prove that 
\begin{equation}\label{E0_Pcritical_U2n+1}
e\cap\{\varphi_C(2i+1):i\in\{0,\ldots,w(C)-1\}\}=\emptyset.
\end{equation}
Otherwise, there exists $i\in\{0,\ldots,w(C)-1\}$ such that 
$\varphi_C(2i+1)\in e$. 
We distinguish the following two cases. 
\begin{itemize}
\item Suppose that there exists $j\in\{0,\ldots,i\}$ such that $\varphi_C(2j)\in e$. 
Since $e\setminus V(C)\neq\emptyset$, $e\cap V(C)=\{\varphi_C(2j),\varphi_C(2i+1)\}$. 
By denoting by $v$ the unique element of $e\setminus V(C)$, we obtain 
$e=\varphi_C(2j)\varphi_C(2i+1)v$. 
It follows from Lemma~\ref{Lcritical} that $\varphi_C(2j)\varphi_C(2w(C)-1)v\in E(H)$, which contradicts the fact that $V(H)\setminus\{\varphi_C(2w(C)-1),\varphi_C(2w(C))\}$ is a module of $H-\varphi_C(2w(C))$. 
\item Suppose that $e\cap\{\varphi_C(2j):j\in\{0,\ldots,i\}\}=\emptyset$. 
It follows from Lemma~\ref{Lcritical} that there exists $f\in E(H)$ such that $\varphi_C(1)\in f$, 
$\varphi_C(0)\not\in f$, and $f\setminus V(C)\neq\emptyset$, which contradicts the fact that 
$V(H)\setminus\{\varphi_C(0),\varphi_C(1)\}$ is a module of 
$H-\varphi_C(0)$. 
\end{itemize}
Consequently, \eqref{E0_Pcritical_U2n+1} holds. 

Now, we prove hat there exists $i\in\{0,\ldots,w(C)\}$ such that 
\begin{equation}\label{E3_Pcritical_U2n+1}
e\cap V(C)=\{\varphi_C(2i)\}.
\end{equation}
Otherwise, it follows from \eqref{E0_Pcritical_U2n+1} that there exist distinct $i,j\in\{0,\ldots,w(C)\}$ such that $0\leq i<j\leq w(C)$ such that $e\cap V(C)=\{\varphi_C(2i),\varphi_C(2j)\}$. 
It follows from Lemma~\ref{Lcritical} that there exists $f\in E(H)$ such that $f\cap V(C)=\{\varphi_C(2i),\varphi_C(2i+2)\}$ and $f\setminus V(C)\neq\emptyset$, which contradicts the fact that 
$\{\varphi_C(2i),\varphi_C(2i+2)\}$ is a module of 
$H-\varphi_C(2i+1)$. 
Consequently, \eqref{E3_Pcritical_U2n+1} holds. 

For the second assertion, consider $C\in\mathfrak{C}_{{\rm even}}(\mathscr{P}(H))$. 
Let $e\in E(H)$ such that $e\cap V(C)\neq\emptyset$ and 
$e\setminus V(C)\neq\emptyset$. 
Set $$p=\min(\{i\in\{0,\ldots,2w(C)-1\}:\varphi_C(i)\in e\}).$$
For a contradiction, suppose that $p$ is odd. 
It follows from Lemma~\ref{Lcritical} that $(e\setminus\{\varphi_C(p)\})\cup\{\varphi_C(1)\}\in E(H)$, which contradicts the fact that 
$V(C)\setminus\{\varphi_C(0),\varphi_C(1)\}$ is a module of $H-\varphi_C(0)$. 
It follows that $p$ is even. 
Thus, there exists $i\in\{0,\ldots,w(C)-1\}$ such that 
$p=\varphi_C(2i)$. 
Similarly, $\max(\{i\in\{0,\ldots,2w(C)-1\}:\varphi_C(i)\in e\})$ is odd. 
Thus there exists $j\in\{0,\ldots,w(C)-1\}$ such that 
$\max(\{i\in\{0,\ldots,2w(C)-1\}:\varphi_C(i)\in e\})=\varphi_C(2j+1)$. 
We obtain that $i\leq j$ and 
$e\cap V(C)=\{\varphi_C(2i),\varphi_C(2j+1)\}$. 
\end{proof}

\begin{prop}\label{Pcritical_U2n+1}
Let $H$ be a critical and 3-hypergraph with $v(H)\geq 5$. 
Suppose that $H$ is not circular. 
Consider $C\in\mathfrak{C}_{{\rm odd}}(\mathscr{P}(H))\setminus\mathfrak{C}_1(\mathscr{P}(H))$. 
\begin{enumerate}
\item Suppose that $V(C)\subsetneq V(H)$. There exists $e\in E(H)$, such that $e\cap V(C)\neq\emptyset$ and 
$e\setminus V(C)\neq\emptyset$. 
Moreover, one of the following two assertions holds 
\begin{itemize}
\item there exist distinct 
$D,D'\in\mathfrak{C}_{{\rm odd}}(\mathscr{P}(H))\setminus\{C\}$ such that 
$e\subseteq V(C)\cup V(D)\cup V(D')$, 
$e\cap V(D)\neq\emptyset$, and $e\cap V(D')\neq\emptyset$;
\item there exists 
$D\in\mathfrak{C}_{{\rm even}}(\mathscr{P}(H))\setminus\{C\}$ such that 
$e\subseteq V(C)\cup V(D)$, and 
$e\cap V(D)\neq\emptyset$. 
\end{itemize}
Furthermore, there exists $i\in\{0,\ldots,w(C)\}$ such that 
$e\cap V(C)=\{\varphi_C(2i)\}$.  
Lastly, for each $j\in\{0,\ldots,w(C)\}$, $(e\setminus\{\varphi_C(2i)\})\cup\{\varphi_C(2j)\}\in E(H)$.  
\item The function $\varphi_C$ is an isomorphism from $C_3(U_{v(C)})$ onto $H[V(C)]$. 
\item If $V(C)\subsetneq V(H)$, then there exists $C'\in\mathfrak{C}(\mathscr{P}(H))\setminus\mathfrak{C}_1(\mathscr{P}(H))$ such that $C'\neq C$.
\end{enumerate}
\end{prop}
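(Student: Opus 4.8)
The proof of Proposition~\ref{Pcritical_U2n+1} naturally splits along its three parts, and the later parts lean on the earlier ones together with Lemma~\ref{Lcritical} and Lemma~\ref{Lcritical_U2n+1_V2n+1}. For the first assertion, I would start from the hypothesis $V(C)\subsetneq V(H)$ and use primeness of $H$ to produce an edge $e\in E(H)$ straddling $V(C)$, i.e.\ with $e\cap V(C)\ne\emptyset$ and $e\setminus V(C)\ne\emptyset$ --- if no such edge existed, $V(C)$ would be a module of $H$, contradicting primeness (here one also needs $|V(C)|\ge 2$, which holds since $C\notin\mathfrak{C}_1(\mathscr{P}(H))$, and $|V(H)\setminus V(C)|\ge 1$). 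Lemma~\ref{Lcritical_U2n+1_V2n+1}(1) immediately gives the index $i\in\{0,\ldots,w(C)\}$ with $e\cap V(C)=\{\varphi_C(2i)\}$. To get the dichotomy on where $e\setminus V(C)$ lives, I would apply Lemma~\ref{Lcritical} to the vertices of $C$ of degree $\le 2$ in $\mathscr{P}(H)$: since $C$ is a path, its endpoints have $\mathscr{P}(H)$-degree $1$ (their non-neighbour set in $C$ gives a module of $H$ minus that endpoint) and its internal vertices have degree $2$; running these module conditions forces the two vertices of $e$ outside $V(C)$ to each lie in some component, and then a second pass --- testing membership of the straddling edge against the degree-$1$/degree-$2$ module conditions at those outside vertices --- forces those components to be either two distinct odd components or a single even component, matching \eqref{critical_construction_2}. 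The ``for each $j$'' transitivity statement is then obtained by sliding $\varphi_C(2i)$ along the even vertices of $C$ using Lemma~\ref{Lcritical} applied at the odd vertices $\varphi_C(2i+1)$ (exactly as in the proof of Proposition~\ref{Pcritical_T2n+1}, where the analogous sliding argument appears via \eqref{E3_Pcritical}).

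\textbf{Second assertion.} To show $\varphi_C$ is an isomorphism from $C_3(U_{v(C)})$ onto $H[V(C)]$, write $v(C)=2m+1$ (so $w(C)=m$), and recall that $U_{2m+1}$ is $L_{2m+1}$ with the arcs between even vertices reversed, so that $E(C_3(U_{2m+1}))$ has an explicit description (the even-even-odd and odd-odd-even patterns, analogous to \eqref{E5_Pcritical}). I would argue exactly as in Proposition~\ref{Pcritical_T2n+1}: first use Lemma~\ref{Lcritical} at each internal vertex of $C$ to rule out the ``forbidden'' triples --- any triple inside $V(C)$ with two consecutive-parity vertices on the same side slides to a triple contradicting a module of the form $\{\varphi_C(2i),\varphi_C(2i+2)\}$ being a module of $H-\varphi_C(2i+1)$, giving one inclusion $E(H[V(C)])\subseteq\varphi_C(E(C_3(U_{v(C)})))$; then establish that the forward shift along the path is an ``automorphism-like'' equivalence on triples (but note the path has no wrap-around, so unlike the cyclic case one gets a one-directional propagation, not a genuine automorphism), and combine with $H[V(C)]$ being prime (since $H$ is critical, $H[V(C)]=H-\bigl(V(H)\setminus V(C)\bigr)$ restricted appropriately; more carefully, $C$ being a component of $\mathscr{P}(H)$ that is a nontrivial path means $H[V(C)]$ must itself be prime, which can be extracted from Lemma~\ref{Lcritical} and the structure of $\mathscr{P}(H)$) to pin down one edge, and propagate it to all of $E(C_3(U_{v(C)}))$ to get the reverse inclusion. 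The main obstacle here is the absence of a cyclic symmetry: in the $T_{2n+1}$ case the rotation $\theta$ does all the work, whereas for the path one has to be careful that the sliding relations \eqref{E9_Pcritical}-style only propagate "inward" and the two endpoints need separate bookkeeping; I expect this to be the most delicate part of the write-up, and I would handle it by induction on the distance along the path, seeding the induction at $\varphi_C(0)\varphi_C(1)\varphi_C(2)$.

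\textbf{Third assertion.} Suppose $V(C)\subsetneq V(H)$ and, for contradiction, that every component of $\mathscr{P}(H)$ other than $C$ is a singleton, i.e.\ $\mathfrak{C}(\mathscr{P}(H))\setminus\mathfrak{C}_1(\mathscr{P}(H))=\{C\}$. By the first assertion there is a straddling edge $e$, and the dichotomy there forces $e\setminus V(C)$ to lie in either two distinct odd nonsingleton components $\ne C$ or one even nonsingleton component $\ne C$ --- but under our contradiction hypothesis there are no nonsingleton components other than $C$, a contradiction. (One should double-check that the dichotomy in the first assertion really does require the outside components to be \emph{nonsingleton}: this is exactly where the Schmerl--Trotter-type classification of what a straddling edge can look like is used, and it is the reason conditions \eqref{critical_construction_1} single out $\mathfrak{C}(\Gamma)\setminus\mathfrak{C}_1(\Gamma)$.) Thus $C'$ with the required property exists. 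Overall, the first assertion carries essentially all the new content; the second is a structural identification that mirrors Proposition~\ref{Pcritical_T2n+1} almost verbatim (with $U$ in place of $T$ and a path in place of a cycle), and the third is a short deduction from the first.
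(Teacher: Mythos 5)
Your treatment of the first assertion is essentially the paper's: primeness of $H$ gives a straddling edge, Lemma~\ref{Lcritical_U2n+1_V2n+1} pins $e\cap V(C)=\{\varphi_C(2i)\}$ and, applied to the outside components, yields the odd/odd versus even dichotomy, and Lemma~\ref{Lcritical} gives the sliding over the even vertices. The problems are in assertions 2 and 3. For assertion 2, your plan to ``pin down one edge'' inside $V(C)$ rests on the claim that $H[V(C)]$ is prime, which you assert ``can be extracted from Lemma~\ref{Lcritical} and the structure of $\mathscr{P}(H)$''. No such extraction is available at that point: in this paper the primeness of $H[V(C)]$ is obtained only as a consequence of assertion 2 itself (via $C_3(U_{v(C)})$ and Schmerl--Trotter), so your argument is circular as stated. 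The paper gets the seed edge differently: if no edge of $H$ were contained in $V(C)$, then by Lemma~\ref{Lcritical_U2n+1_V2n+1} no edge of $H$ would contain any odd-indexed vertex $\varphi_C(2i+1)$, and the set of these vertices (or the complement of one of them) would be a nontrivial module of $H$, contradicting primeness of the whole $H$. After that, no shift ``automorphism'' or endpoint bookkeeping is needed: Lemma~\ref{Lcritical} directly gives $\varphi_C(2i')\varphi_C(2j'+1)\varphi_C(2k')\equiv_H\varphi_C(0)\varphi_C(1)\varphi_C(2)$ for all admissible triples, and the existence of one interior edge forces all of them, while the same lemma excludes the forbidden parity patterns; so the delicate induction you anticipate is unnecessary.

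For assertion 3 the proposed argument genuinely fails, exactly at the point you flagged: singleton components are odd components, so the dichotomy of assertion 1 is perfectly compatible with the straddling edge meeting only elements of $\mathfrak{C}_1(\mathscr{P}(H))$ outside $C$ (this really occurs in $\Gamma\bullet\mathbb{H}$ when a size-3 edge of $\mathbb{H}$ contains singleton components). Hence assuming that $C$ is the only non-singleton component produces no contradiction with assertion 1, and your deduction collapses. The paper's proof of assertion 3 uses a different and essential ingredient, Theorem~\ref{prop_Eh-2}: since $H[V(C)]$ is prime (by assertion 2), there exist $v,w\in(V(H)\setminus V(C))\cup\underline{V(C)}$ with $H-\{v,w\}$ prime; criticality forces $v\neq w$, so $vw\in E(\mathscr{P}(H))$ and $v,w$ lie in a common non-singleton component $C'$; finally $\underline{V(C)}=\emptyset$, because a module $\{c,u\}$ with $c\in V(C)$, $u\notin V(C)$ would transform an edge of $H[V(C)]$ through $c$ into a straddling edge meeting $V(C)$ in two vertices, contradicting Lemma~\ref{Lcritical_U2n+1_V2n+1}; hence $v,w\notin V(C)$ and $C'\neq C$. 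Without some argument of this kind (producing an edge of the primality graph outside $C$), the third assertion is not a consequence of the first.
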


\begin{proof}
For the first assertion, suppose that $V(C)\subsetneq V(H)$. 
Since $V(C)$ is not a module of $H$, there exists $e\in E(H)$ such that 
$e\cap V(C)\neq\emptyset$ and $e\setminus V(C)\neq\emptyset$. 
It follows from Lemma~\ref{Lcritical_U2n+1_V2n+1} that 
there exists 
$i\in\{0,\ldots,w(C)\}$ such that $e\cap V(C)=\{\varphi_C(2i)\}$. 
It follows from Lemma~\ref{Lcritical} that  
$(e\setminus\{\varphi_C(2i)\})\cup\{\varphi_C(2j)\}\in E(H)$ for each 
$j\in\{0,\ldots,w(C)\}$. 
Moreover, since $|e\cap V(C)|=1$, there exists $D\in\mathfrak{C}(\mathscr{P}(H))\setminus\{C\}$ such that $e\cap V(D)\neq\emptyset$. 
We distinguish the following two cases. 
\begin{itemize}
\item Suppose that $D\in\mathfrak{C}_{{\rm even}}(\mathscr{P}(H))$. 
By Lemma~\ref{Lcritical_U2n+1_V2n+1}, $|e\cap V(D)|=2$. 
Thus $e\subseteq V(C)\cup V(D)$. 
\item Suppose that $D\in\mathfrak{C}_{{\rm odd}}(\mathscr{P}(H))$. 
By Lemma~\ref{Lcritical_U2n+1_V2n+1}, $|e\cap V(D)|=1$. 
Therefore, there exists $D'\in\mathfrak{C}(\mathscr{P}(H))\setminus\{C,D\}$ such that $e\cap V(D')\neq\emptyset$. 
It follows from Lemma~\ref{Lcritical_U2n+1_V2n+1} that 
$D'\in\mathfrak{C}_{{\rm odd}}(\mathscr{P}(H))$. 
Thus $e\subseteq V(C)\cup V(D)\cup V(D')$. 
\end{itemize}

For the second assertion, suppose for a contradiction that for every $e\in E(H)$ such that 
$e\cap V(C)\neq\emptyset$, we have $e\setminus V(C)\neq\emptyset$. 
By Lemma~\ref{Lcritical_U2n+1_V2n+1}, there exists 
$i\in\{0,\ldots,w(C)\}$ such that 
$e\cap V(C)=\{\varphi_C(2i)\}$. 
Thus, for every $e\in E(H)$, we have 
$e\cap\{\varphi_C(2i+1):i\in\{0,\ldots,w(C)-1\}\}=\emptyset$. 
Therefore, $\{\varphi_C(2i+1):i\in\{0,\ldots,w(C)-1\}\}$ is a module of $H$, and hence $H$ is decomposable. 
Consequently, there exists $e\in V(H)$ such that $e\subseteq V(C)$. 
Set $$p=\min(\{i\in\{0,\ldots,2w(C)\}:\varphi_C(i)\in e\}).$$
For a contradiction, suppose that $p$ is odd. 
It follows from Lemma~\ref{Lcritical} that 
$(e\setminus\{\varphi_C(p)\})\cup\{\varphi_C(1)\}\in E(H)$, 
which contradicts the fact that 
$V(C)\setminus\{\varphi_C(0),\varphi_C(1)\}$ is a module of $H-\varphi_C(0)$. 
Thus, there exists $i\in\{0,\ldots,w(C)\}$ such that 
$p=\varphi_C(2i)$. 
Similarly, there exists $k\in\{0,\ldots,w(C)\}$ such that 
$\max(\{i\in\{0,\ldots,2w(C)\}:\varphi_C(i)\in e\})=\varphi_C(2k)$. 
since $e\subseteq V(C)$, $i<k$. 
Consider $q\in\{0,\ldots,2w(C)\}$ such that $e=\varphi_C(2i)\varphi_C(q)\varphi_C(2k)$. 
For a contradiction, suppose that $q$ is even. 
We obtain $2i<q<2k$. 
It follows from Lemma~\ref{Lcritical} that 
$\varphi_C(2i)\varphi_C(2i+2)\varphi_C(2k)\in E(H)$, 
which contradicts the fact that 
$\{\varphi_C(2i),\varphi_C(2i+2)\}$ is a module of $H-\varphi_C(2i+1)$. 
It follows that $p$ is odd. 
Hence, there exists $j\in\{0,\ldots,w(C)-1\}$ such that 
$q=\varphi_C(2j+1)$. 
We obtain $e=\varphi_C(2i)\varphi_C(2j+1)\varphi_C(2k)$, where 
$0\leq i\leq j<k\leq w(C)$. 
It follows that 
\begin{equation}\label{E6bis_Pcritical_U2n+1}
E(H[V(C)])\subseteq\{\varphi_C(2i')\varphi_C(2j'+1)\varphi_C(2k'):0\leq i'\leq j'<k'\leq w(C)\}.
\end{equation}
It follows from Lemma~\ref{Lcritical} that 
\begin{equation}\label{E7_Pcritical_U2n+1}
\varphi_C(2i')\varphi_C(2j'+1)\varphi_C(2k')\equiv_H\varphi_C(0)\varphi_C(1)\varphi_C(2)
\end{equation} 
for $0\leq i'\leq j'<k'\leq w(C)$. 
Since $e=\varphi_C(2i)\varphi_C(2j+1)\varphi_C(2k)$, we obtain $\varphi_C(0)\varphi_C(1)\varphi_C(2)$. 
It follows from \eqref{E6bis_Pcritical_U2n+1} and \eqref{E7_Pcritical_U2n+1} that 
$$E(H[V(C)])=\{\varphi_C(2i')\varphi_C(2j'+1)\varphi_C(2k'):0\leq i'\leq j'<k'\leq w(C)\}.$$
In other words, $\varphi_C$ is an isomorphism from $C_3(U_{v(C)})$ onto $H[V(C)]$. 

For the third assertion, suppose that $V(C)\subsetneq V(H)$. 
By Theorem~\ref{thm _Critical_ST}, $U_{v(C)}$ is prime. 
Hence, $C_3(U_{v(C)})$ is prime by Theorem~\ref{thm_same_primality}. 
It follows from the second assertion above that $H[V(C)]$ is prime. 
By Theorem~\ref{prop_Eh-2}, there exist $v,w\in (V(H)\setminus V(C))\cup\underline{V(C)}$ such that $H-\{v,w\}$ is prime. 
Since $H$ is critical, we have $v\neq w$. 
Therefore, there exists 
$C'\in\mathfrak{C}(\mathscr{P}(H))\setminus\mathfrak{C}_1(\mathscr{P}(H))$ such that $v,w\in C'$. 
Lastly, suppose for a contradiction that $\underline{V(C)}\neq\emptyset$. 
There exist $c\in V(C)$ and $u\in V(H)\setminus V(C)$ such that $\{c,u\}$ is a module of $H[V(C)\cup\{u\}]$. 
Since $C$ is isomorphic to $C_3(U_{v(C)})$ by the second assertion above, 
there exists $e\in E(C)$ such that $c\in e$. 
Since $\{c,u\}$ is a module of $H[V(C)\cup\{u\}]$, we get 
$(e\setminus\{c\})\cup\{u\}\in E(H)$, which contradicts Lemma~\ref{Lcritical_U2n+1_V2n+1}. 
It follows that $\underline{V(C)}=\emptyset$. 
Thus $v,w\not\in V(C)$, so $C'\neq C$. 
\end{proof}

\begin{prop}\label{Pcritical_V2n+1}
Let $H$ be a critical and 3-hypergraph such that $v(H)\geq 5$. 
Suppose that $H$ is not circular. 
Consider $C\in\mathfrak{C}_{{\rm even}}(\mathscr{P}(H))$. 
\begin{enumerate}
\item There exists $D\in\mathfrak{C}_{{\rm odd}}(\mathscr{P}(H))$ such that 
$\varphi_C(2i)\varphi_C(2j+1)\varphi_D(2k)\in E(H)$, where $i,j\in\{0,\ldots,w(C)-1\}$, with $i\leq j$, and $k\in\{0,\ldots,w(D)\}$;
\item For each $k\in\{0,\ldots,w(D)\}$, 
the extension $\psi_C^{2k}:\{0,\ldots,v(C)\}\longrightarrow 
V(C)\cup\{\varphi_D(2k)\}$ of $\varphi_C$ defined by 
\begin{equation*}
\begin{cases}
(\psi_C^{2k})_{\restriction\{0,\ldots,v(C)-1\}}=
\varphi_{C}\\
\text{and}\\
\psi_C^{2k}(v(C))=\varphi_{D}(2k),
\end{cases}
\end{equation*}
is an isomorphism from 
$C_3(W_{v(C)+1})$ onto 
$H[V(C)\cup\{\varphi_D(2k)\}]$.
\item If $|V(H)\setminus V(C)|\geq 2$, then there exists $C'\in\mathfrak{C}(\mathscr{P}(H))\setminus\mathfrak{C}_1(\mathscr{P}(H))$ such that $C'\neq C$.
\end{enumerate}
\end{prop}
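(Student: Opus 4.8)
The plan is to mimic the proof of Proposition~\ref{Pcritical_U2n+1}, with $C_3(W_{v(C)+1})$ playing the role of $C_3(U_{v(C)})$ and with the extra apex vertex supplied by an odd component $D$. First I would record the shape of $C_3(W_{2n+1})$: a short case analysis on which arcs of $L_{2n+1}$ are reversed shows that a triple $a<b<c$ induces a $3$-cycle of $W_{2n+1}$ exactly when $c=2n$, $a$ is even and $b$ is odd; hence, writing $v(C)=2w(C)$,
\[
E(C_3(W_{2w(C)+1}))=\bigl\{\,\{2i,2j+1,2w(C)\}:0\le i\le j\le w(C)-1\,\bigr\}.
\]
The core step, and the one I expect to be the main obstacle, is a claim that uses \emph{only} Lemma~\ref{Lcritical}: \emph{$H[V(C)]$ has no edge}. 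This is the point where the even case genuinely differs from the odd case of Proposition~\ref{Pcritical_U2n+1}, where the analogous sliding argument terminates in a description of $E(H[V(C)])$ rather than in a contradiction.

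To prove the claim I would suppose $e\in E(H)$ with $e\subseteq V(C)$ and write its vertices as $\varphi_C(a),\varphi_C(b),\varphi_C(c)$, $a<b<c$. Using the internal-vertex modules $\{\varphi_C(m-1),\varphi_C(m+1)\}$ of $H-\varphi_C(m)$ and the pendant-vertex modules at $\varphi_C(0)$ and $\varphi_C(2w(C)-1)$, I slide $a$ down by steps of $2$: if it reached $1$ the edge would contain $\varphi_C(1)$ but not $\varphi_C(0)$, contradicting that $V(H)\setminus\{\varphi_C(0),\varphi_C(1)\}$ is a module of $H-\varphi_C(0)$; so $a$ is even and slides to $0$. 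Symmetrically $c$ is odd and slides up to $2w(C)-1$ (otherwise it lands on $2w(C)-2$ with $\varphi_C(2w(C)-1)$ absent, contradicting the pendant module at $\varphi_C(2w(C)-1)$). Now the edge is $\{\varphi_C(0),\varphi_C(b),\varphi_C(2w(C)-1)\}$: if $b$ is even I slide it to $2$ and contradict that $\{\varphi_C(0),\varphi_C(2)\}$ is a module of $H-\varphi_C(1)$; if $b$ is odd I push it up to $2w(C)-3$ and contradict that $\{\varphi_C(2w(C)-3),\varphi_C(2w(C)-1)\}$ is a module of $H-\varphi_C(2w(C)-2)$ (the case $w(C)=1$ is vacuous and $w(C)=2$ merges the last two contradictions). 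Once the claim holds it also disposes of the implicit hypothesis $V(C)\subsetneq V(H)$: if $V(C)=V(H)$ then $H=H[V(C)]$ would be edgeless, hence decomposable, a contradiction; so from now on Lemma~\ref{Lcritical_U2n+1_V2n+1} is available.

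For Assertion~1, since $V(C)$ is a nontrivial non-module of the prime hypergraph $H$ there is $e\in E(H)$ with $e\cap V(C)\neq\emptyset\neq e\setminus V(C)$; by Lemma~\ref{Lcritical_U2n+1_V2n+1}(2) one has $e\cap V(C)=\{\varphi_C(2i_0),\varphi_C(2j_0+1)\}$ with $i_0\le j_0$, so $e\setminus V(C)$ is a single vertex $v_0$, whose component $D$ must be odd (if it were even, Lemma~\ref{Lcritical_U2n+1_V2n+1}(2) applied to $D$ would force $|e\cap V(D)|=2$), and then $v_0=\varphi_D(2k_0)$ by Lemma~\ref{Lcritical_U2n+1_V2n+1}(1) (trivially so if $D$ is a singleton). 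From $e$ I would ``spread'' the edge over all admissible indices by moving one coordinate at a time along the relevant path — the even coordinate in $V(C)$, then the odd coordinate in $V(C)$, then the coordinate in $V(D)$ — each move justified by an internal- or pendant-vertex module of Lemma~\ref{Lcritical}, the inequality $i\le j$ being exactly what keeps the deleted vertex off the current edge; this gives $\varphi_C(2i)\varphi_C(2j+1)\varphi_D(2k)\in E(H)$ for all $0\le i\le j\le w(C)-1$ and all $0\le k\le w(D)$. For Assertion~2 I would fix $k$, set $v=\varphi_D(2k)$ and $X=V(C)\cup\{v\}$, and observe that every $3$-subset of $X$ is either contained in $V(C)$ or of the form $\{v,x,y\}$ with $x,y\in V(C)$; the claim kills the first kind, while Assertion~1 together with Lemma~\ref{Lcritical_U2n+1_V2n+1}(2) identifies the edges of the second kind as exactly the $\{\varphi_C(2i),\varphi_C(2j+1),v\}$ with $0\le i\le j\le w(C)-1$. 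Comparing with the displayed formula for $E(C_3(W_{2w(C)+1}))$ shows $\psi_C^{2k}$ carries that edge set bijectively onto $E(H[X])$, hence is an isomorphism.

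For Assertion~3, assume $|V(H)\setminus V(C)|\ge 2$ and take $X=V(C)\cup\{v\}$ as above, so $H[X]\cong C_3(W_{v(C)+1})$ by Assertion~2. Since $W_{v(C)+1}$ is prime (Theorem~\ref{thm _Critical_ST}), $C_3(W_{v(C)+1})$ is prime by Theorem~\ref{thm_same_primality}, so $H[X]$ is prime; and $X\subsetneq V(H)$ because $|V(H)\setminus X|\ge 1$. By Theorem~\ref{prop_Eh-2} there exist $p,q\in(V(H)\setminus X)\cup\underline{X}$ with $H-\{p,q\}$ prime, and $p\neq q$ since $H$ is critical. I would then check $\underline{X}\cap V(C)=\emptyset$: if some $y\in V(C)$ had a companion $u\in V(H)\setminus X$ with $\{y,u\}$ a module of $H[X\cup\{u\}]$, then, taking an edge of the prime hypergraph $H[X]$ through $y$ and applying the module condition, one gets an edge of $H$ meeting $V(C)$ in the single vertex obtained from that edge by deleting $y$, contradicting Lemma~\ref{Lcritical_U2n+1_V2n+1}(2). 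Hence $p,q\in V(H)\setminus V(C)$, so $pq\in E(\mathscr{P}(H))$ places $p,q$ in a common component $C'$; this $C'$ has at least two vertices and is distinct from $C$, i.e.\ $C'\in\mathfrak{C}(\mathscr{P}(H))\setminus\mathfrak{C}_1(\mathscr{P}(H))$, as required. Apart from the edgelessness claim, the only delicate bookkeeping is in the spreading arguments, where one must track parities and the condition $i\le j$ to be sure every intermediate triple is a genuine edge and every deleted vertex lies off it.
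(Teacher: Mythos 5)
Your proposal is correct and follows essentially the same route as the paper: the same sliding arguments via the modules supplied by Lemma~\ref{Lcritical}, the same use of Lemma~\ref{Lcritical_U2n+1_V2n+1} to pin down how edges meet $V(C)$, and the same appeal to Theorem~\ref{prop_Eh-2} plus $\underline{X}\cap V(C)=\emptyset$ for Assertion~3. The only differences are organizational — you front-load the ``$H[V(C)]$ is edgeless'' claim (which the paper establishes inside the converse inclusion of Assertion~2) and use it to justify $V(C)\subsetneq V(H)$ explicitly, a point the paper leaves implicit.
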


\begin{proof}
For the first assertion, since $V(C)$ is not a module of $H$, there exists $e\in E(H)$ such that 
$e\cap V(C)\neq\emptyset$ and $e\setminus V(C)\neq\emptyset$. 
By Lemma~\ref{Lcritical_U2n+1_V2n+1}, there exist $i,j\in\{0,\ldots,w(C)-1\}$ such that 
$i\leq j$ and $e\cap V(C)=\{\varphi_C(2i),\varphi_C(2j+1)\}$. 
Since $|e\cap V(C)|=2$, there exists $D\in\mathfrak{C}(\mathscr{P}(H))\setminus\{C\}$ such that $e\cap V(D)\neq\emptyset$. 
It follows from Lemma~\ref{Lcritical_U2n+1_V2n+1} that 
$D\in\mathfrak{C}_{{\rm odd}}(\mathscr{P}(H))$ and $e\cap V(D)=\{\varphi_D(2k)\}$, where $k\in\{0,\ldots,w(D)\}$. 

For the second assertion, it follows from the first assertion above that 
there exists $D\in\mathfrak{C}_{{\rm odd}}(\mathscr{P}(H))$ such that 
\begin{equation}\label{E0_Pcritical_V2n+1}
\varphi_C(2i_0)\varphi_C(2j_0+1)\varphi_D(2k_0)\in E(H), 
\end{equation}
where $i_0,j_0\in\{0,\ldots,w(C)-1\}$, with $i_0\leq j_0$, and $k_0\in\{0,\ldots,w(D)\}$. 
It follows from Lemma~\ref{Lcritical} that 
\begin{equation}\label{E2_Pcritical_V2n+1}
\varphi_C(2i_0)\varphi_C(2j_0+1)\varphi_D(2k)\in E(H)
\end{equation}
for each $k\in\{0,\ldots,w(D)\}$. 
Let $k\in\{0,\ldots,w(D)\}$. 
We prove that 
\begin{align}\label{E3_Pcritical_V2n+1}
E(H[V(C)\cup\{\varphi_D(2k)\}])=\{&\varphi_C(2i)\varphi_C(2j+1)\varphi_D(2k):\\
&0\leq i\leq j\leq w(C)-1\}.\nonumber
\end{align}
Consider $i,j\in\{0,\ldots,w(C)-1\}$ such that $i\leq j$. 
It follows from Lemma~\ref{Lcritical} that 
\begin{equation}\label{E4_Pcritical_V2n+1}
\varphi_C(2i)\varphi_C(2j+1)\varphi_D(2k)\equiv_H
\varphi_C(0)\varphi_C(1)\varphi_D(2k).
\end{equation}

By \eqref{E2_Pcritical_V2n+1} and \eqref{E4_Pcritical_V2n+1}, we have 
$\varphi_C(0)\varphi_C(1)\varphi_D(2k)\in E(H)$. 
It follows from \eqref{E4_Pcritical_V2n+1} that 
$\varphi_C(2i)\varphi_C(2j+1)\varphi_D(2k)\in E(H)$. 
Therefore, 
\begin{align}\label{E5_Pcritical_V2n+1}
E(H[V(C)\cup\{\varphi_D(2k)\}])\supseteq\{&\varphi_C(2i)\varphi_C(2j+1)\varphi_D(2k):\\
&0\leq i\leq j\leq w(C)-1\}.\nonumber
\end{align}
Conversely, consider $e\in E(H[V(C)\cup\{\varphi_D(2k)\}])$. 
For a contradiction, suppose that $\varphi_D(2k)\not\in e$. 
There exist $p,q,r\in\{0,\ldots,2w(C)-1\}$, with $p<q<r$, such that 
$e=\varphi_C(p)\varphi_C(q)\varphi_D(r)$. 
If $p$ is odd, then $\varphi_C(1)\varphi_C(q)\varphi_D(r)\in E(H)$, which contradicts the fact that $V(H)\setminus\{\varphi_C(0),\varphi_C(1)\}$ is a module of 
$H-\varphi_C(0)$. 
Hence, $p=2i$, where $i\in\{0,\ldots,w(C)-1\}$. 
Similarly, we have $r=2l+1$, where $l\in\{i,\ldots,w(C)-1\}$. 
If $q$ is even, then $\varphi_C(2i)\varphi_C(2i+2)\varphi_D(2l+1)\in E(H)$, which contradicts the fact that $\{\varphi_C(2i),\varphi_C(2i+2)\}$ is a module of 
$H-\varphi_C(2i+1)$. 
If $q$ is odd, then $\varphi_C(2i)\varphi_C(2l-1)\varphi_D(2l+1)\in E(H)$, which contradicts the fact that $\{\varphi_C(2l-1),\varphi_C(2l+1)\}$ is a module of 
$H-\varphi_C(2l)$. 
Consequently, we obtain  
$\varphi_D(2k)\in e$. 
It follows from Lemma~\ref{Lcritical_U2n+1_V2n+1} that there exist $i,j\in\{0,\ldots,w(C)-1\}$ such that 
$i\leq j$ and $e\cap V(C)=\{\varphi_C(2i),\varphi_C(2j+1)\}$. 
Thus,   
\begin{align}\label{E6_Pcritical_V2n+1}
E(H[V(C)\cup\{\varphi_D(2k)\}])\subseteq\{&\varphi_C(2i)\varphi_C(2j+1)\varphi_D(2k):\\
&0\leq i\leq j\leq w(C)-1\}.\nonumber
\end{align}
It follows from \eqref{E5_Pcritical_V2n+1} and \eqref{E6_Pcritical_V2n+1} that \eqref{E3_Pcritical_V2n+1} holds. 
Therefore, the extension $\psi_C^{2k}$ of $\varphi_C$ is an isomorphism from 
$C_3(W_{v(C)+1})$ onto $H[V(C)\cup\{\varphi_D(2k)\}]$. 

For the third assertion, suppose that $|V(H)\setminus V(C)|\geq 2$. 
Let $k\in\{0,\ldots,w(D)\}$. 
Set $Y=V(C)\cup\{\varphi_D(2k)\}$. 
We have $Y\subsetneq V(H)$. 
By Theorem~\ref{thm _Critical_ST}, $W_{v(C)+1}$ is prime. 
Hence, $C_3(W_{v(C)+1})$ is prime by Theorem~\ref{thm_same_primality}. 
It follows from the second assertion above that $H[Y]$ is prime. 
By Proposition~\ref{prop_Eh-2}, there exist $v,w\in (V(H)\setminus Y)\cup\underline{Y}$ such that $H-\{v,w\}$ is prime. 
Since $H$ is critical, we have $v\neq w$. 
Therefore, there exists 
$C'\in\mathfrak{C}(\mathscr{P}(H))\setminus\mathfrak{C}_1(\mathscr{P}(H))$ such that $v,w\in C'$. 
Lastly, suppose for a contradiction that $V(C)\cap\underline{Y}\neq\emptyset$. 
There exist $c\in V(C)$ and $u\in V(H)\setminus Y$ such that $\{c,u\}$ is a module of 
$H[Y\cup\{u\}]$. 
By the second assertion above, 
there exists $d\in V(C)\setminus\{c\}$ such that $cd\varphi_D(2k)\in E(H[Y])$. 
Since $\{c,u\}$ is a module of $H[V(C)\cup\{u\}]$, we get 
$ud\varphi_D(2k)\in E(H[Y])$, which contradicts Lemma~\ref{Lcritical_U2n+1_V2n+1}. 
It follows that $\underline{Y}\subseteq V(H)\setminus V(C)$. 
Thus $v,w\not\in V(C)$, so $C'\neq C$. 
\end{proof}

\begin{proof}[Proof of Theorem~\ref{thm_1_main}]
Let $H$ be a non circular and critical and 3-hypergraph such that $v(H)\geq 5$. 
By Corollary~\ref{C_critical_T2n+1}, all the components of $\mathscr{P}(H)$ are paths. 
We associate with $H$ the hypergraph $\underline{H}$ defined on 
$\mathfrak{C}(\mathscr{P}(H))$ as  follows 
\begin{enumerate}
\item given distinct $C,D\in\mathfrak{C}(\mathscr{P}(H))$, $CD\in E(\underline{H})$ if 
$C\in\mathfrak{C}_{{\rm even}}(\mathscr{P}(H))$, 
$D\in\mathfrak{C}_{{\rm odd}}(\mathscr{P}(H))$, and 
there exists $e\in E(H)$ such that $|e\cap V(C)|=2$ and $|e\cap V(D)|=1$;
\item given distinct $I,J,K\in\mathfrak{C}(\mathscr{P}(H))$, $IJK\in E(\underline{H})$ if 
$I,J,K\in\mathfrak{C}_{{\rm odd}}(\mathscr{P}(H))$ and 
$|e\cap V(I)|=|e\cap V(J)|=|e\cap V(K)|=1$. 
\end{enumerate}
It follows from Propositions~\ref{Pcritical_U2n+1} and \ref{Pcritical_V2n+1} that 
$H=\mathscr{P}(H)\bullet\underline{H}$. 
\end{proof}

\section{Proof of Proposition~\ref{Prop1_Critical_hyper}}

We use the following preliminary result (see \cite[Fact~20]{BCIZ}

\begin{lem}\label{fact1_same_block}
Let $H$ be a 3-hypergraph. Consider $X\subsetneq V(H)$ such that $H[X]$ is prime. 
Let $M$ be a module of $H$. We have $M\cap X=\emptyset$, $M\supseteq X$ or $M\cap X=\{y\}$, where $y\in X$. Moreover, the following assertions hold. 
\begin{enumerate}
\item If $M\cap X=\emptyset$, then all the elements of $M$ belong to the same block of $p_{(H,X)}$. 
\item If $M\supseteq X$, then all the elements of $V(H)\setminus M$ belong to $\langle X\rangle_H$. 
\item If $M\cap X=\{y\}$, where $y\in X$, then all the elements of 
 $M\setminus\{y\}$ belong to $X_H(y)$. 
\end{enumerate}
\end{lem}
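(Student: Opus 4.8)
The plan is to derive everything from one elementary observation: \emph{the trace of a module on an induced subhypergraph is again a module}. Precisely, if $M$ is a module of $H$ and $W\subseteq V(H)$, then $M\cap W$ is a module of $H[W]$. This is immediate from Definition~\ref{defi_module_hyper}: for an edge $e\subseteq W$ we have $e\cap(M\cap W)=e\cap M$ and $e\setminus(M\cap W)=e\setminus M$, so $e$ straddles $M\cap W$ in $H[W]$ exactly when it straddles $M$ in $H$; the witness $m$ then lies in $e\subseteq W$, hence in $M\cap W$, and for $n\in M\cap W\subseteq M$ the set $(e\setminus\{m\})\cup\{n\}$ is an edge of $H$ contained in $W$, hence an edge of $H[W]$.

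Applying this with $W=X$ shows $M\cap X$ is a module of $H[X]$; since $H[X]$ is prime, its only modules are $\emptyset$, the singletons, and $X$, which gives the trichotomy $M\cap X=\emptyset$, $M\supseteq X$, or $M\cap X=\{y\}$ with $y\in X$. For assertion~(2), if $M\supseteq X$ and $v\in V(H)\setminus M$ (so $v\notin X$), applying the observation with $W=X\cup\{v\}$ gives that $M\cap W=X$ is a module of $H[X\cup\{v\}]$, i.e. $v\in\langle X\rangle_H$. For assertion~(3), if $M\cap X=\{y\}$ and $v\in M\setminus\{y\}$ (so $v\notin X$), then with $W=X\cup\{v\}$ we get that $M\cap W=\{y,v\}$ is a module of $H[X\cup\{v\}]$, i.e. $v\in X_H(y)$. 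Both of these are one-line consequences.

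For assertion~(1), assume $M\cap X=\emptyset$ (so $M\subseteq V(H)\setminus X$) and fix distinct $v,v'\in M$. Applying the observation with $W=X\cup\{v,v'\}$ shows $\{v,v'\}$ is a module of $H[X\cup\{v,v'\}]$. Unwinding Definition~\ref{defi_module_hyper} for edges $e$ with $v\in e\subseteq X\cup\{v\}$ (for which $e\setminus\{v\}\subseteq X$ is nonempty, so $e$ straddles $\{v,v'\}$), and symmetrically, shows $e\in E(H)\iff (e\setminus\{v\})\cup\{v'\}\in E(H)$. Hence the transposition of $v$ and $v'$ (which fixes $X$ pointwise, as $v,v'\notin X$) is an isomorphism from $H[X\cup\{v\}]$ onto $H[X\cup\{v'\}]$. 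The properties "$H[X\cup\{\cdot\}]$ is prime", "$X$ is a module of $H[X\cup\{\cdot\}]$", and "$\{y,\cdot\}$ is a module of $H[X\cup\{\cdot\}]$" (for each fixed $y\in X$) are invariant under this isomorphism, so $v$ and $v'$ lie in the same member of $\{{\rm Ext}_H(X),\langle X\rangle_H\}\cup\{X_H(y):y\in X\}$; since this family is a partition of $V(H)\setminus X$, they lie in the same block of $p_{(H,X)}$, and assertion~(1) follows.

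The only nontrivial ingredient is that $p_{(H,X)}$ is genuinely a partition of $V(H)\setminus X$: that for $v\notin X$ exactly one of the three alternatives above holds, with $y$ unique in the last case. I expect the real effort to go here, and I would prove it by taking a nontrivial module $N$ of $H[X\cup\{v\}]$ when that subhypergraph is not prime, intersecting with the prime $H[X]$ to force $N\cap X\in\{\emptyset,\{y\},X\}$ and hence $N\in\{X,\{y,v\}\}$, and ruling out the simultaneous occurrence of the modules $X$ and $\{y,v\}$ by the standard overlap argument for modules.
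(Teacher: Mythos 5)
Your proof is correct, and in fact there is nothing in this paper to compare it against: the lemma is quoted without proof from \cite{BCIZ} (Fact~20), so your argument stands as a self-contained substitute. The key observation you use — that the trace $M\cap W$ of a module $M$ of $H$ is a module of $H[W]$ — does hold verbatim for this hypergraph notion of module, and your verification of it is sound; the trichotomy and assertions (2) and (3) then follow exactly as you say, and your transposition argument for assertion (1) (that swapping $v,v'\in M$ fixes $X$ pointwise and is an isomorphism of $H[X\cup\{v\}]$ onto $H[X\cup\{v'\}]$, hence preserves membership in ${\rm Ext}_H(X)$, $\langle X\rangle_H$ and each $X_H(y)$) is correct. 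One remark on the ingredient you defer to the end: what assertion (1) actually needs is only the covering property, namely that every $v\in V(H)\setminus X$ lies in at least one member of $p_{(H,X)}$ (this also settles the case $|M|\leq 1$, which your pair argument does not address directly). Your sketch of that covering is already complete: if $H[X\cup\{v\}]$ is not prime it has a nontrivial module $N$, its trace $N\cap X$ is a trivial module of the prime $H[X]$, and the three cases force $N=X$ or $N=\{y,v\}$ (the case $N\cap X=\emptyset$ gives $N\subseteq\{v\}$, which is trivial). Pairwise disjointness of the blocks — the ``overlap argument'' you anticipate spending effort on — is true here but is not needed for the statement, since ``same block'' only requires the existence of one block containing all of $M$.
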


In this section and the next one, we consider a graph $\Gamma$ satisfying \eqref{critical_construction_0} and \eqref{critical_construction_1}. 
We consider also a $\{2,3\}$-hypergraph $\mathbb{H}$ defined on $\mathfrak{C}(\Gamma)$ satisfying 
\eqref{critical_construction_2}. 
We use the following notation. 

\begin{nota}
For $\mathbb{W}\subseteq V(\mathbb{H})$, set 
$\overline{\mathbb{W}}=\bigcup_{C\in\mathbb{W}}V(C)$. 
Conversely, for $W\subseteq V(H_{(\Gamma,\mathbb{H})})$, set 
$W/\mathfrak{C}(\Gamma)=\{C\in\mathfrak{C}(\Gamma):V(C)\cap W\neq\emptyset\}$. 
\end{nota}

Both next remarks are useful. 

\begin{rem}\label{v(C)_odd}
Let $C\in\mathfrak{C}_{{\rm odd}}(\Gamma)\setminus\mathfrak{C}_1(\Gamma)$. 
First, suppose that $w(C)=1$. 
It follows from the definition of $\Gamma\bullet\mathbb{H}$ that 
$E((\Gamma\bullet\mathbb{H})[V(C)])=\{V(C)\}$. 
Therefore, 
$(\Gamma\bullet\mathbb{H})[V(C)]$ is prime. 
Since $v(C)=3$, the next three assertions are obvious
\begin{enumerate}
\item $V(C)\setminus\{\varphi_C(0),\varphi_C(1)\}$ 
is a module of $(\Gamma\bullet\mathbb{H})[V(C)]-\varphi_C(0)$;
\item $V(C)\setminus\{\varphi_C(1),\varphi_C(2)\}$ 
is a module of $(\Gamma\bullet\mathbb{H})[V(C)]-\varphi_C(2)$;
\item $\{\varphi_C(0),\varphi_C(2)\}$ is a module of $(\Gamma\bullet\mathbb{H})[V(C)]-\varphi_C(1)$.
\end{enumerate}

Second, suppose that $w(C)\geq 2$. 
It follows from the definition of $\Gamma\bullet\mathbb{H}$ that $\varphi_C$ is an isomorphism from $C_3(U_{v(C)})$ onto $(\Gamma\bullet\mathbb{H})[V(C)]$. 
By Theorem~\ref{thm _Critical_ST}, $U_{v(C)}$ is critical. 
By Theorem~\ref{thm_same_primality}, $C_3(U_{v(C)})$ is critical and 
$\mathscr{P}(C_3(U_{v(C)}))=\mathscr{P}(U_{v(C)})$. 
Furthermore, 
we have $\mathscr{P}(U_{v(C)})=P_{v(C)}$ by Proposition~\ref{prop_U2n+1}. 
Since $\varphi_C$ is an isomorphism from $C_3(U_{v(C)})$ onto $(\Gamma\bullet\mathbb{H})[V(C)]$, 
$(\Gamma\bullet\mathbb{H})[V(H)]$ is critical and 
$$\mathscr{P}((\Gamma\bullet\mathbb{H})[V(C)])=(V(C),\{\varphi_C(i)\varphi_C(i+1):0\leq i<2w(C)\}).$$
\end{rem}

\begin{rem}\label{v(C)_even}
Let $C\in\mathfrak{C}_{{\rm even}}(\Gamma)$ such that $w(C)\geq 1$. 
Suppose that there exists $e\in E(\mathbb{H})$ such that $C\in e$. 
We have $e=CD$, where $D\in\mathfrak{C}_{{\rm odd}}(\Gamma)$. 
Let $k\in\{0,\ldots,w(D)\}$.  
First, suppose that $w(C)=1$. 
It follows from the definition of $\Gamma\bullet\mathbb{H}$ that 
$E((\Gamma\bullet\mathbb{H})[V(C)\cup\{\varphi_D(2k)\}])=\{V(C)\cup\{\varphi_D(2k)\}\}$. 
Therefore, 
$(\Gamma\bullet\mathbb{H})[V(C)\cup\{\varphi_D(2k)\}]$ is prime. 

Second, suppose that $w(C)\geq 2$. 
Let $\psi_C^{2k}:\{0,\ldots,v(C)\}\longrightarrow 
V(C)\cup\{\varphi_D(2k)\}$ satisfying $(\psi_C^{2k})_{\restriction\{0,\ldots,v(C)-1\}}=\varphi_{C}$ and 
$\psi_C^{2k}(v(C))=\varphi_{D}(2k)$. 
By \eqref{E1_Rbis_Critical_hyper}, $\psi_C^{2k}$ is an isomorphism from 
$C_3(W_{v(C)+1})$ onto 
$(\Gamma\bullet\mathbb{H})[V(C)\cup\{\varphi_D(2k)\}]$. 
By Theorem~\ref{thm _Critical_ST}, $W_{v(C)+1}$ is critical. 
By Theorem~\ref{thm_same_primality}, $C_3(W_{v(C)+1})$ is critical and 
$\mathscr{P}(C_3(W_{v(C)+1}))=\mathscr{P}(W_{v(C)+1})$. 
Furthermore, it follows from Proposition~\ref{prop_W2n+1} that $\mathscr{P}(W_{v(C)+1})-2n=P_{v(C)}$ and 
$N_{\mathscr{P}(W_{v(C)+1})}(2n)=\emptyset$. 
Since $\psi_C^{2k}$ is an isomorphism from 
$C_3(W_{v(C)+1})$ onto 
$(\Gamma\bullet\mathbb{H})[V(C)\cup\{\varphi_D(2k)\}]$, 
$(\Gamma\bullet\mathbb{H})[V(C)\cup\{\varphi_D(2k)\}]$ is critical and we have 
\begin{align*}
\mathscr{P}((\Gamma\bullet\mathbb{H})[V(C)\cup\{\varphi_D(2k)\}])&-\varphi_D(2k)=\\
&(V(C),\{\varphi_C(i)\varphi_C(i+1):0\leq i<2w(C)-1\})
\end{align*}
and 
$N_{\mathscr{P}((\Gamma\bullet\mathbb{H})[V(C)\cup\{\varphi_D(2k)\}])}(\varphi_D(2k))=\emptyset$. 
\end{rem}

If $\mathbb{H}$ is disconnected, then $\Gamma\bullet\mathbb{H}$ is decomposable, whence the necessity of  Assertion (C1) in Proposition~\ref{Prop1_Critical_hyper}. 
Indeed, we have

\begin{lem}\label{lem1bis_Crticial_hyper}
If $\mathbb{C}$ is a component of $\mathbb{H}$, then 
$\overline{V(\mathbb{C})}$ and 
$V(\Gamma\bullet\mathbb{H})\setminus\overline{V(\mathbb{C})}$ are modules of 
$\Gamma\bullet\mathbb{H}$. 
\end{lem}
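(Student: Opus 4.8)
The plan is to prove that $\overline{V(\mathbb{C})}$ is a module of $\Gamma\bullet\mathbb{H}$; since the vertex set $V(\Gamma\bullet\mathbb{H}) = \overline{V(\mathbb{C})} \sqcup \overline{V(\mathbb{H})\setminus V(\mathbb{C})}$ and $\mathbb{C}$ being a component means $\mathbb{H}$ has no edge meeting both $V(\mathbb{C})$ and its complement, the complement $V(\Gamma\bullet\mathbb{H})\setminus\overline{V(\mathbb{C})} = \overline{V(\mathbb{H})\setminus V(\mathbb{C})}$ is a union of components of $\mathbb{H}$ as well, so by symmetry it suffices to handle one of the two. I would set $M = \overline{V(\mathbb{C})}$ and take an arbitrary edge $e \in E(\Gamma\bullet\mathbb{H})$ with $e \cap M \neq \emptyset$ and $e \setminus M \neq \emptyset$; the goal is to exhibit the unique $m \in M \cap e$ and check that swapping $m$ for any $n \in M$ keeps $e$ an edge.

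First I would recall the structure of $E(\Gamma\bullet\mathbb{H})$ from \eqref{E1_Rbis_Critical_hyper}: every edge is either an ``internal'' edge lying in $E_C$ for a single odd component $C$ of $\Gamma$ (hence $e \subseteq V(C)$), or an edge $E_\varepsilon$ associated with some $\varepsilon \in E(\mathbb{H})$. An internal edge $e \subseteq V(C)$ cannot straddle $M$ and its complement, because $C$ as a vertex of $\mathbb{H}$ lies entirely in $V(\mathbb{C})$ or entirely outside it. So the edge $e$ in question must come from $E_\varepsilon$ for some $\varepsilon \in E(\mathbb{H})$, and since $\mathbb{C}$ is a component, $\varepsilon$ is either contained in $V(\mathbb{C})$ or disjoint from it — but then $e \subseteq \overline{\varepsilon}$ is contained in $M$ or in its complement, contradicting that $e$ straddles. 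This shows there is \emph{no} edge $e$ with $e \cap M \neq \emptyset$ and $e \setminus M \neq \emptyset$, so $M$ is vacuously a module.

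Thus the real content is just the bookkeeping that every edge of $\Gamma\bullet\mathbb{H}$ is ``supported'' on a single vertex or a single edge of $\mathbb{H}$, in the sense that $e / \mathfrak{C}(\Gamma)$ (the set of components of $\Gamma$ that $e$ meets) is either a singleton $\{C\}$ with $C \in \mathfrak{C}_{\mathrm{odd}}(\Gamma)\setminus\mathfrak{C}_1(\Gamma)$, or equals some $\varepsilon \in E(\mathbb{H})$. This is immediate from inspecting the three clauses defining $E_C$, $E_\varepsilon$ for $|\varepsilon|=2$, and $E_\varepsilon$ for $|\varepsilon|=3$. Combined with the observation that a component $\mathbb{C}$ of $\mathbb{H}$ has the property that every $\varepsilon \in E(\mathbb{H})$ satisfies $\varepsilon \subseteq V(\mathbb{C})$ or $\varepsilon \cap V(\mathbb{C}) = \emptyset$, and that every single vertex $C$ of $\mathbb{H}$ is in $V(\mathbb{C})$ or not, we conclude $e / \mathfrak{C}(\Gamma) \subseteq V(\mathbb{C})$ or $e / \mathfrak{C}(\Gamma) \cap V(\mathbb{C}) = \emptyset$, i.e.\ $e \subseteq M$ or $e \cap M = \emptyset$.

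There is essentially no obstacle here: the lemma is a direct structural consequence of the definition of $\Gamma\bullet\mathbb{H}$, and the only care needed is to enumerate the edge types and to invoke the defining property of a component of $\mathbb{H}$ (no edge of $\mathbb{H}$ — of size $2$ or $3$ — crosses between $V(\mathbb{C})$ and its complement). If one prefers not to argue vacuously, one can equivalently phrase it as: the partition $\{\overline{V(\mathbb{C})}, \overline{V(\mathbb{H})\setminus V(\mathbb{C})}\}$ is a ``congruence'' with no crossing edges, hence both classes are modules. I would write the short version: fix $e \in E(\Gamma\bullet\mathbb{H})$, determine from \eqref{E1_Rbis_Critical_hyper} the unique $C$ or $\varepsilon$ supporting it, apply the no-crossing property of $\mathbb{C}$, and conclude $e \subseteq M$ or $e \cap M = \emptyset$; the module condition then holds trivially, and the same argument applied to the complementary union of components finishes the proof.
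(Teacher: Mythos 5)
Your proposal is correct and follows essentially the same route as the paper: both show that, by the definition of $E(\Gamma\bullet\mathbb{H})$, every edge is supported either on a single component of $\Gamma$ or on the components forming an edge $\varepsilon\in E(\mathbb{H})$, so no edge can meet both $\overline{V(\mathbb{C})}$ and its complement, and the module condition holds vacuously for both sets. The extra bookkeeping you include (enumerating the three edge types) is just a more explicit version of the paper's appeal to the definition of $\Gamma\bullet\mathbb{H}$.
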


\begin{proof}
For a contradiction, suppose that there exists $e\in E(\Gamma\bullet\mathbb{H})$ such that 
$e\cap\overline{V(\mathbb{C})}\neq\emptyset$ 
and 
$e\cap(V(\Gamma\bullet\mathbb{H})\setminus\overline{V(\mathbb{C})})\neq\emptyset$. 
There exist $C\in V(\mathbb{C})$ and $D\not\in V(\mathbb{C})$ such that $e\cap V(C)\neq\emptyset$ and $e\cap V(D)\neq\emptyset$. 
It follows from the definition of $\Gamma\bullet\mathbb{H}$ that there exists 
$\varepsilon\in E(\mathbb{H})$ such that $C,D\in\varepsilon$, which contradicts the fact that 
$\mathbb{C}$ is a component of $\mathbb{H}$. 
Consequently, for each $e\in E(\Gamma\bullet\mathbb{H})$, we have $e\subseteq\overline{V(\mathbb{C})}$ or 
$e\subseteq(V(\Gamma\bullet\mathbb{H})\setminus\overline{V(\mathbb{C})})\neq\emptyset$. 
It follows that $\overline{V(\mathbb{C})}$ and 
$V(\Gamma\bullet\mathbb{H})\setminus\overline{V(\mathbb{C})}$ are modules of 
$\Gamma\bullet\mathbb{H}$. 
\end{proof}

Each edge of $\mathbb{H}$ induces a prime subhypergraph of $\Gamma\bullet\mathbb{H}$. 
Precisely, we have 

\begin{lem}\label{Cl0_Rbis_Critical_hyper}
For every $\varepsilon\in E(\mathbb{H})$, 
$(\Gamma\bullet\mathbb{H})[\overline{\varepsilon}]$ is prime. 
\end{lem}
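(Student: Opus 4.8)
\textbf{Proof proposal for Lemma~\ref{Cl0_Rbis_Critical_hyper}.}
The plan is to fix $\varepsilon\in E(\mathbb{H})$ and prove primality of $(\Gamma\bullet\mathbb{H})[\overline{\varepsilon}]$ by case analysis on $|\varepsilon|$, reducing in each case to the primality of a known $C_3$-structure of a critical tournament together with an observation that the relevant components do interact. Recall from \eqref{critical_construction_2} that either $|\varepsilon|=2$ with $\varepsilon=CD$, $C\in\mathfrak{C}_{\mathrm{even}}(\Gamma)$, $D\in\mathfrak{C}_{\mathrm{odd}}(\Gamma)$, or $|\varepsilon|=3$ with $\varepsilon=IJK\subseteq\mathfrak{C}_{\mathrm{odd}}(\Gamma)$. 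In the first case, writing $\overline{\varepsilon}=V(C)\cup V(D)$, I would unwind the definition of $E_\varepsilon$ (and the sets $E_C,E_D$ when $w(C)$ or $w(D)$ is $\geq 1$) to exhibit an isomorphism from $\overline{\varepsilon}$, suitably ordered via $\varphi_C$ and $\varphi_D$, onto the $C_3$-structure $C_3(U_{v(C)+v(D)})$ or a closely related critical tournament on $v(C)+v(D)$ vertices; in the second case I would do the same for $\overline{\varepsilon}=V(I)\cup V(J)\cup V(K)$. Once the induced subhypergraph is identified with the $C_3$-structure of a critical (hence prime) tournament, primality follows from Theorem~\ref{thm_same_primality}.

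The cleaner route, which I would actually carry out, is to argue directly that no nontrivial subset $M\subseteq\overline{\varepsilon}$ is a module. First I would handle the degenerate subcases where some component in $\varepsilon$ is a singleton ($w=0$): here $\overline{\varepsilon}$ may be as small as three or four vertices and one checks primality by hand, using \eqref{critical_construction_2} to see that the single edge structure forces the only candidate modules to be trivial. For the generic case, I would use Remark~\ref{v(C)_odd} and Remark~\ref{v(C)_even}: each $H[V(C)]$ with $C$ odd and $w(C)\geq 1$ is prime (it is $C_3(U_{v(C)})$, or the $3$-vertex prime hypergraph when $w(C)=1$), and similarly $H[V(C)\cup\{\varphi_D(2k)\}]$ is prime when $C$ is even. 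So $\overline{\varepsilon}$ already contains a prime induced subhypergraph on all but finitely many vertices. I would then invoke Lemma~\ref{fact1_same_block} with $X$ equal to such a large prime induced subhypergraph: any module $M$ of $(\Gamma\bullet\mathbb{H})[\overline{\varepsilon}]$ satisfies $M\cap X=\emptyset$, $M\supseteq X$, or $|M\cap X|=1$, and in each alternative the structure of $E_\varepsilon$ — specifically, that every vertex of one component appears in an edge together with specified vertices of the others — is used to contradict the defining property of a module.

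Concretely, the key mechanism is: if $M$ were a nontrivial module and $v\in M$, $v'\notin M$ lie in components $C,C'$ appearing in $\varepsilon$, then picking an edge $e\in E_\varepsilon$ that meets $M$ and $\overline{\varepsilon}\setminus M$ forces (by Definition~\ref{defi_module_hyper}) that $e\cap M$ is a singleton $\{m\}$ with $(e\setminus\{m\})\cup\{n\}\in E(\Gamma\bullet\mathbb{H})$ for all $n\in M$; running this against the explicit parity constraints in the definitions of $E_C$, $E_\varepsilon$ ($\varphi_C(2i)\varphi_C(2j+1)\varphi_D(2k)$ for the size-$2$ edge, $\varphi_I(2i)\varphi_J(2j)\varphi_K(2k)$ for the size-$3$ edge) produces an edge violating those parity constraints, a contradiction. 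The main obstacle I anticipate is bookkeeping: there are several shapes of $\varepsilon$ and, within each, several positions an intruding module can occupy relative to the even/odd indexed vertices of each path component, so the write-up needs a uniform lemma (essentially a restatement of Lemma~\ref{Lcritical_U2n+1_V2n+1} adapted to $\Gamma\bullet\mathbb{H}$ rather than to $H$) describing how edges of $\Gamma\bullet\mathbb{H}$ meet each $V(C)$; with that in hand the module-killing step is short, but isolating and proving that lemma cleanly is where the real work lies.
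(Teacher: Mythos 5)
Your ``cleaner route'' is essentially the paper's own argument: after disposing of the zero-weight cases, it fixes a prime piece $X$ supplied by Remarks~\ref{v(C)_odd} and~\ref{v(C)_even} (the paper takes $X=V(D)$ for $|\varepsilon|=2$, resp.\ $X=V(I)$ with $w(I)>0$ for $|\varepsilon|=3$), applies Lemma~\ref{fact1_same_block}, and eliminates each alternative for a nontrivial module via the explicit parity form of the edges in \eqref{E1_Rbis_Critical_hyper}, so no separate analogue of Lemma~\ref{Lcritical_U2n+1_V2n+1} is needed --- how edges meet each component is already explicit in the construction. Two small corrections that do not affect the route you say you would actually follow: your first sketched plan cannot be carried out, since $(\Gamma\bullet\mathbb{H})[\overline{\varepsilon}]$ need not be the $C_3$-structure of any critical tournament, nor even realizable (e.g.\ $\varepsilon=IJK$ with $w(I)=1$ and $w(J)=w(K)=0$), and the degenerate case $|\varepsilon|=2$ with $D$ a singleton is not a three- or four-vertex hand check (since $V(C)$ may be large) but is exactly the primality statement of Remark~\ref{v(C)_even}.
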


\begin{proof}
Let $\varepsilon\in E(\mathbb{H})$. 
First, suppose that $|\varepsilon|=2$. 
There exist $C\in\mathfrak{C}_{{\rm even}}(\Gamma)$ and 
$D\in\mathfrak{C}_{{\rm odd}}(\Gamma)$ such that 
$\varepsilon=CD$. 
Hence, we have to show that 
$(\Gamma\bullet\mathbb{H})[V(C)\cup V(D)]$ is prime. 
If $w(D)=0$, then 
$(\Gamma\bullet\mathbb{H})[V(C)\cup V(D)]$ is prime by Remark~\ref{v(C)_even}. 
Suppose that $w(D)\geq 1$. 
Set $X=V(D)$. 
By Remark~\ref{v(C)_odd}, $(\Gamma\bullet\mathbb{H})[X]$ is prime. 
We have $V(C)\subseteq\langle X\rangle_{\Gamma\bullet\mathbb{H}}$ 
(see Notation~\ref{nota_p_hyper}). 
Let $M$ be a module of 
$(\Gamma\bullet\mathbb{H})[X\cup V(C)]$ such that $|M|\geq 2$. 
We have to show that $M=X\cup V(C)$. 
By Lemma~\ref{fact1_same_block}, we have 
$M\cap X=\emptyset$, $M\supseteq X$, or $M\cap X=\{y\}$, where $y\in X$. 
It follows from the third assertion of Lemma~\ref{fact1_same_block} that $|M\cap X|\neq 1$. 
Now, suppose that $M\cap X=\emptyset$. 
Hence, we have $M\subseteq V(C)$. 
By Remark~\ref{v(C)_even}, $(\Gamma\bullet\mathbb{H})[V(C)\cup\{\varphi_{D}(0)\}]$ is prime. 
Since $|M|\geq 2$ and $M\subseteq V(C)$, we obtain 
$M=V(C)\cup\{\varphi_{D}(0)\}$, which contradicts 
$M\cap X=\emptyset$. 
Therefore, we have $M\supseteq X$. 
Let $i,j\in\{0,\ldots,w(C)-1\}$ with $i\leq j$. 
We have $\varphi_{C}(2i)\varphi_{C}(2j+1)\varphi_{D}(0)\in E(H)$. 
Since $M$ is a module of 
$(\Gamma\bullet\mathbb{H})[X\cup V(C)]$ and 
$\varphi_{C}(2i)\varphi_{C}(2j+1)\varphi_{D}(1)\not\in E(H)$, we obtain 
$\varphi_{C}(2i),\varphi_{C}(2j+1)\in M$. 
It follows that $M=X\cup V(C)$. 

Second, suppose that $|e|=3$. 
There exist $I,J,K\in\mathfrak{C}_{{\rm odd}}(\Gamma)$ such that 
$e=IJK$. 
Hence, we have to show that 
$(\Gamma\bullet\mathbb{H})[V(I)\cup V(J)\cup V(K)]$ is prime. 
If $w(I)=w(J)=w(K)=0$, then $|V(I)\cup V(J)\cup V(K)|=3$, and hence 
$E((\Gamma\bullet\mathbb{H})[V(I)\cup V(J)\cup V(K)])=
\{\varphi_{I}(0)\varphi_{J}(0)\varphi_{K}(0)\}$. 
It follows that 
$(\Gamma\bullet\mathbb{H})[V(I)\cup V(J)\cup V(K)]$ is prime. 
Now, suppose that $w(I)>0$, $w(J)>0$, or $w(K)>0$. 
For instance, assume that $w(I)>0$. 
By Remark~\ref{v(C)_odd}, $(\Gamma\bullet\mathbb{H})[V(I)]$ is prime. 
Set $X=V(I)$. 
We have $V(I)\cup V(J)\cup V(K)\subseteq\langle X\rangle_{\Gamma\bullet\mathbb{H}}$. 
Let $M$ be a module of $(\Gamma\bullet\mathbb{H})[X\cup V(J)\cup V(K)]$ such that 
$|M|\geq 2$. 
We have to show that 
$M=X\cup V(J)\cup V(K)$. 
It follows from Lemma~\ref{fact1_same_block} that $M\cap X=\emptyset$ or $X\subseteq M$. 

For a contradiction, suppose that $M\cap X=\emptyset$. 
For $0\leq j\leq w(J)$ and $0\leq k\leq w(K)$, we have 
$\varphi_{I}(0)\varphi_{J}(2j)\varphi_{K}(2k)\in E(\Gamma\bullet\mathbb{H})$. 
It follows that 
$M\cap\{\varphi_{J}(2j):0\leq j\leq w(J)\}=\emptyset$ 
or 
$M\cap\{\varphi_{K}(2k):0\leq k\leq w(K)\}=\emptyset$.
For instance, assume that 
\begin{equation}\label{E2_Cl0_Rbis_Critical_hyper}
M\cap\{\varphi_{J}(2j):0\leq j\leq w(J)\}=\emptyset.
\end{equation}
We distinguish the following two cases. 
\begin{enumerate}
\item Suppose that 
$M\cap\{\varphi_{J}(2j+1):0\leq j\leq w(J)-1\}
\neq\emptyset$. 
Hence, there exists $j\in\{0,\ldots,w(J)-1\}$ such that 
$\varphi_{J}(2j+1)\in M$. 
We have 
$\varphi_{J}(2j)\varphi_{J}(2j+1)\varphi_{J}(2j+2)\in E(H)$. 
By \eqref{E2_Cl0_Rbis_Critical_hyper}, $\varphi_{J}(2j),
\varphi_{J}(2j+2)\not\in M$. 
Since $M$ is a module of $(\Gamma\bullet\mathbb{H})[X\cup V(J)\cup V(K)]$, we obtain 
$\varphi_{J}(2j)x\varphi_{J}(2j+2)\in E(H)$ for every $x\in M$. 
For each $v\in V(K)$, we have 
$\varphi_{J}(2j)v\varphi_{J}(2j+2)\not\in E(H)$. 
It follows that $M\cap V(K)=\emptyset$. 
Therefore, $M\subseteq V(J)$, and hence 
$M$ is a module of $(\Gamma\bullet\mathbb{H})[X\cup V(J)]$. 
Since $|M|\geq 2$, $w(J)\geq 1$.  
By Remark~\ref{v(C)_odd}, 
$(\Gamma\bullet\mathbb{H})[V(J)]$ is prime. 
Therefore, $M=V(J)$, which contradicts \eqref{E2_Cl0_Rbis_Critical_hyper}. 
\item Suppose that 
$M\cap\{\varphi_{J}(2j+1):0\leq j\leq w(J)-1\}=\emptyset$. 
By \eqref{E2_Cl0_Rbis_Critical_hyper}, $M\cap V(J)=\emptyset$, and hence $M\subseteq V(K)$. 
As previously for $J$, we obtain $w(k)\geq 1$ and $M=V(K)$, which is impossible because 
$\varphi_{I}(0)\varphi_{J}(0)\varphi_{K}(0)\in E(H)$ and 
$\varphi_{I}(0)\varphi_{J}(0)\varphi_{K}(1)\not\in E(H)$. 
\end{enumerate}

It follows that $X\subseteq M$. 
Recall that $\varphi_{I}(0)\varphi_{J}(2j)\varphi_{K}(2k)\in E(\Gamma\bullet\mathbb{H})$ for 
$0\leq j\leq w(J)$ and $0\leq k\leq w(K)$. 
Let $j\in\{0,\ldots,w(J)\}$ and 
$k\in\{0,\ldots,w(J)\}$. 
We have $\varphi_{I}(0)\in M$ because $X\subseteq M$. 
Since 
$\varphi_{I}(1)\varphi_{J}(2j)\varphi_{K}(2k)\not\in E(\Gamma\bullet\mathbb{H})$ and $M$ is a module of 
$(\Gamma\bullet\mathbb{H})[X\cup V(J)\cup V(K)]$, we obtain 
$\varphi_{J}(2j),\varphi_{K}(2k)\in M$.  
It follows that $X\cup\{\varphi_{J}(2j):0\leq j\leq w(J)\}\cup\{\varphi_{K}(2k):0\leq k\leq w(K)\}\subseteq M$. 
Clearly, $V(J)\subseteq M$ when 
$w(J)=0$. 
Suppose that $w(J)\geq 1$. 
By Remark~\ref{v(C)_odd}, 
$(\Gamma\bullet\mathbb{H})[V(J)]$ is prime. 
Since $M\cap V(J)$ is a module of $(\Gamma\bullet\mathbb{H})[V(J)]$ such that 
$\{\varphi_{J}(2j):0\leq j\leq w(J)\}\subseteq 
M\cap V(J)$, we obtain $V(J)\subseteq M$. 
Similarly, $V(K)\subseteq M$. 
Consequently, $M=X\cup V(J)\cup V(K)$. 
\end{proof}

In the next four results, we compare the module of $\mathbb{H}$ with those of $\Gamma\bullet\mathbb{H}$.

\begin{lem}\label{lem1_Crticial_hyper}
Suppose that $\mathbb{H}$ is connected. 
For each nontrivial module $M$ of $\Gamma\bullet\mathbb{H}$, we have 
$M=\overline{M/\mathfrak{C}(\Gamma)}$, 
$|M/\mathfrak{C}(\Gamma)|\geq 2$, and 
$M/\mathfrak{C}(\Gamma)$ is a module of $\mathbb{H}$. 
\end{lem}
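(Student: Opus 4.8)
The goal is to show that every nontrivial module $M$ of $\Gamma\bullet\mathbb{H}$ is "saturated" with respect to the partition of $V(\Gamma)$ into components (i.e.\ $M=\overline{M/\mathfrak{C}(\Gamma)}$), that it meets at least two components, and that the set of components it meets is a module of $\mathbb{H}$. The first step is to fix a nontrivial module $M$ and a component $C$ of $\Gamma$ such that $V(C)\cap M\neq\emptyset$, and to prove $V(C)\subseteq M$. For this I would use Lemmas~\ref{Cl0_Rbis_Critical_hyper}, \ref{fact1_same_block} together with Remarks~\ref{v(C)_odd} and \ref{v(C)_even}: since $\mathbb{H}$ is connected and $\mathfrak{C}(\Gamma)\setminus\mathfrak{C}_1(\Gamma)\neq\emptyset$, every component $C$ lies in some edge $\varepsilon$ of $\mathbb{H}$, so $(\Gamma\bullet\mathbb{H})[\overline{\varepsilon}]$ is prime and contains $V(C)$. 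Intersecting $M$ with such a prime induced subhypergraph and applying Lemma~\ref{fact1_same_block} forces $M\cap\overline{\varepsilon}$ to be $\emptyset$, all of $\overline{\varepsilon}$, or a singleton inside one "side"; the structure of $C_3(U_{v(C)})$ or $C_3(W_{v(C)+1})$ recorded in the Remarks rules out the singleton and the partial cases, giving $V(C)\subseteq M$ whenever $V(C)\cap M\neq\emptyset$. This yields $M=\overline{M/\mathfrak{C}(\Gamma)}$.

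**Two components.** Next I would show $|M/\mathfrak{C}(\Gamma)|\geq 2$. If $M/\mathfrak{C}(\Gamma)=\{C\}$ were a singleton, then $M=V(C)$; but by the previous paragraph $C$ lies in an edge $\varepsilon\in E(\mathbb{H})$ with $|\overline{\varepsilon}|>|V(C)|$, and $(\Gamma\bullet\mathbb{H})[\overline{\varepsilon}]$ is prime, so $V(C)$ cannot be a nontrivial module of it — contradicting that $M=V(C)$ is a module of $\Gamma\bullet\mathbb{H}$ (a module restricts to a module of any induced subhypergraph containing the relevant edges). Hence $M$ meets at least two components. Combined with Lemma~\ref{lem1bis_Crticial_hyper}-type reasoning this also shows $M/\mathfrak{C}(\Gamma)$ is not all of $\mathfrak{C}(\Gamma)$ unless $M=V(\Gamma\bullet\mathbb{H})$, which is excluded since $M$ is nontrivial.

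**$M/\mathfrak{C}(\Gamma)$ is a module of $\mathbb{H}$.** Finally, set $\mathbb{W}=M/\mathfrak{C}(\Gamma)$; I must check $\mathbb{W}$ is a module of $\mathbb{H}$, i.e.\ of both its $2$-uniform and $3$-uniform parts. Take $\varepsilon\in E(\mathbb{H})$ with $\varepsilon\cap\mathbb{W}\neq\emptyset$ and $\varepsilon\setminus\mathbb{W}\neq\emptyset$; I need to produce the unique component $D\in\varepsilon\cap\mathbb{W}$ with $\varepsilon\cap\mathbb{W}=\{D\}$ and verify the swap condition $(\varepsilon\setminus\{D\})\cup\{D'\}\in E(\mathbb{H})$ for all $D'\in\mathbb{W}$. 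The translation between edges of $\mathbb{H}$ and edges of $\Gamma\bullet\mathbb{H}$ is explicit from the construction in \eqref{E1_Rbis_Critical_hyper}: an edge $CD\in E(\mathbb{H})$ corresponds to edges $\varphi_C(2i)\varphi_C(2j+1)\varphi_D(2k)$, and $IJK\in E(\mathbb{H})$ to $\varphi_I(2i)\varphi_J(2j)\varphi_K(2k)$. Picking a representative edge $e$ of $\Gamma\bullet\mathbb{H}$ realizing $\varepsilon$ and chosen so that $e\cap\overline{\mathbb{W}}\neq\emptyset$ and $e\setminus\overline{\mathbb{W}}\neq\emptyset$ (using $M=\overline{\mathbb{W}}$), the module condition for $M$ hands us an $m\in M$ with $e\cap M=\{m\}$ and all $\{e\setminus\{m\}\}\cup\{n\}$ edges; translating back, the component $D$ containing $m$ is the unique element of $\varepsilon\cap\mathbb{W}$, and replacing $m$ by vertices of other components of $\mathbb{W}$ of the right parity produces exactly the edges of $\mathbb{H}$ witnessing the module condition. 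The case analysis must separate $|\varepsilon|=2$ (swapping the odd component, or swapping the even one — here one uses that $|e\cap V(C)|=2$ for an even component) from $|\varepsilon|=3$.

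**Main obstacle.** The delicate point is the bookkeeping in the last step: when $|\varepsilon|=2$ and the component in $\varepsilon\cap\mathbb{W}$ is the \emph{even} one $C$, a single vertex $m$ of $C$ is not enough to describe $e\cap M$ since $|e\cap V(C)|=2$; one has to argue that \emph{both} vertices of $e\cap V(C)$ lie in $M$ (which follows from $V(C)\subseteq M$, already established) and then re-examine which vertex is the "$m$" of the module condition — in fact the edge $e$ then meets $\overline{\mathbb{W}}$ in the two vertices of $C$, so $e\setminus M$ is the single vertex $\varphi_D(2k)$ in the odd component $D\notin\mathbb{W}$, and it is $\mathbb{W}$'s complement that behaves like a singleton relative to $e$. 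Getting the parities and the "which side is the singleton" exactly right across all three edge-types is where the proof needs care; everything else is a routine translation through the construction of $\Gamma\bullet\mathbb{H}$.
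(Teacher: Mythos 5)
Your overall plan---first saturation ($V(C)\subseteq M$ for every component $C$ meeting $M$), then $|M/\mathfrak{C}(\Gamma)|\geq 2$, then translating the module condition for $M$ through the construction to show that $M/\mathfrak{C}(\Gamma)$ is a module of $\mathbb{H}$---is the same as the paper's, and your first two steps are sound: the paper proves saturation directly from the primality of $(\Gamma\bullet\mathbb{H})[V(C)]$, resp.\ $(\Gamma\bullet\mathbb{H})[V(C)\cup\{\varphi_D(0)\}]$, rather than by applying Lemma~\ref{fact1_same_block} to $X=\overline{\varepsilon}$, but your variant can be completed by the same ``forbidden edge shape'' contradiction (an edge with two vertices in an odd component and one outside, or exactly one vertex in an even component, cannot exist in $\Gamma\bullet\mathbb{H}$).

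The genuine gap is at the very point you single out as the crux. Take $\varepsilon=CD\in E(\mathbb{H})$ with $C$ even and $D$ odd, straddling $\mathbb{W}=M/\mathfrak{C}(\Gamma)$, and suppose $C\in\mathbb{W}$, $D\notin\mathbb{W}$. By saturation both vertices of $e\cap V(C)$ lie in $M$ while $\varphi_D(2k)\notin M$, and you propose to read the module condition ``from the other side,'' saying that the complement of $\mathbb{W}$ behaves like a singleton relative to $e$. But Definition~\ref{defi_module_hyper} is not symmetric: for a module $M$ and an edge $e$ with $e\cap M\neq\emptyset$ and $e\setminus M\neq\emptyset$, it is $e\cap M$ that must be a singleton, so an edge $\varphi_C(2i)\varphi_C(2j+1)\varphi_D(2k)$ with two vertices in $M$ and one outside flatly contradicts the hypothesis that $M$ is a module of $\Gamma\bullet\mathbb{H}$. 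The correct use of this configuration is to derive exactly that contradiction and conclude it cannot occur, forcing $\varepsilon\cap\mathbb{W}=\{D\}$ (the odd component)---this is what the paper does---after which the swaps $CD'\in E(\mathbb{H})$ for all $D'\in\mathbb{W}$ follow from the module property of $M$ applied to $\varphi_C(2i)\varphi_C(2j+1)\varphi_D(2k)$, since the only edge type of $\Gamma\bullet\mathbb{H}$ with two vertices in one component and one in another is $E_{CD'}$ with $CD'\in E(\mathbb{H})$. As you describe it, the bad orientation is neither ruled out nor usable: if the even component were the element of $\varepsilon\cap\mathbb{W}$, the swap condition in $\mathbb{H}$ would require $2$-edges joining $D$ to odd members of $\mathbb{W}$, which \eqref{critical_construction_2} forbids. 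So the misstep is a misreading of the one-sided module definition at precisely the delicate case; once it is replaced by the contradiction argument, the remainder of your sketch (including the analogous uniqueness step for $|\varepsilon|=3$, where the straddling edge $\varphi_I(0)\varphi_J(0)\varphi_K(0)$ forces $\varphi_J(0)\notin M$) coincides with the paper's proof.
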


\begin{proof}
First, we prove that $V(C)\subseteq M$ for every $C\in M/\mathfrak{C}(\Gamma)$. 
To begin, suppose that $C\in\mathfrak{C}_{{\rm odd}}(\Gamma)$. 
If $w(C)=0$, then we clearly have $V(C)\subseteq M$. 
Hence, suppose that $w(C)\geq 1$. 
By Remark~\ref{v(C)_odd}, $(\Gamma\bullet\mathbb{H})[V(C)]$ is prime. 
For a contradiction, suppose that $|V(C)\cap M|=1$. 
Denote by $c$ the unique element of $V(C)\cap M$. 
Since $(\Gamma\bullet\mathbb{H})[V(C)]$ is prime, there exist distinct $d,d'\in V(C)\setminus M$ such that 
$cdd'\in E(\Gamma\bullet\mathbb{H})$. 
Since $|M|\geq 2$, there exists $v\in M\setminus V(C)$. 
Since $M$ is a module of $\Gamma\bullet\mathbb{H}$, we obtain 
$vdd'\in E(H_{(\Gamma,\mathbb{H})})$, which contradicts the definition of 
$\Gamma\bullet\mathbb{H}$. 
It follows that $|V(C)\cap M|\geq 2$. 
Since $(\Gamma\bullet\mathbb{H})[V(C)]$ is prime, $V(C)\subseteq M$. 
Now, suppose that $C\in\mathfrak{C}_{{\rm even}}(\Gamma)$. 
Since $\mathbb{H}$ is connected, there exists 
$D\in\mathfrak{C}_{{\rm odd}}(\Gamma)$ such that $CD\in E(\mathbb{H})$. 
For a contradiction, suppose that $|V(C)\cap M|=1$. 
Denote by $c$ the unique element of $V(C)\cap M$. 
There exists $c'\in V(C)\setminus M$ such that 
$cc'\varphi_D(0)\in E(\Gamma\bullet\mathbb{H})$. 
By Remark~\ref{v(C)_even}, 
$(\Gamma\bullet\mathbb{H})[V(C)\cup\{\varphi_D(0)\}]$ is prime. 
Hence, if $|(V(C)\cup\{\varphi_D(0)\})\cap M|\geq 2$, then 
$V(C)\cup\{\varphi_D(0)\}\subseteq M$, which contradicts $|V(C)\cap M|=1$. 
Therefore, $|(V(C)\cup\{\varphi_D(0)\})\cap M|=1$. 
Since $|M|\geq 2$, there exists $v\in M\setminus(V(C)\cup\{\varphi_D(0)\})$. 
Since $M$ is a module of $\Gamma\bullet\mathbb{H}$, we obtain 
$vc'\varphi_D(0)\in E(\Gamma\bullet\mathbb{H})$, which contradicts the definition of 
$\Gamma\bullet\mathbb{H}$. 
It follows that $|V(C)\cap M|\geq 2$. 
Since $(\Gamma\bullet\mathbb{H})[V(C)\cup\{\varphi_D(0)\}]$ is prime, we obtain 
$V(C)\cup\{\varphi_D(0)\}\subseteq M$. 

Second, we prove that $|M/\mathfrak{C}(\Gamma)|\geq 2$. 
If $v(C)=1$ for each $C\in M/\mathfrak{C}(\Gamma)$, then $|M/\mathfrak{C}(\Gamma)|\geq 2$ because $|M|\geq 2$. 
Hence, consider $C\in M/\mathfrak{C}(\Gamma)$ such that $v(C)\geq 2$. 
Since $\mathbb{H}$ is connected, there exists $\varepsilon\in E(\mathbb{H})$ such that $C\in\varepsilon$. 
By Lemma~\ref{Cl0_Rbis_Critical_hyper}, 
$(\Gamma\bullet\mathbb{H})[\overline{\varepsilon}]$ is prime. 
Since $V(C)\subseteq M$ and $v(C)\geq 2$, we get $\overline{\varepsilon}\subseteq M$. 
Thus $e\subseteq (M/\mathfrak{C}(\Gamma))$, so 
$|M/\mathfrak{C}(\Gamma)|\geq 2$. 

Third, we prove that $M/\mathfrak{C}(\Gamma)$ is a module of $\mathbb{H}$. 
Let $\varepsilon\in E(\mathbb{H})$ such that $\varepsilon\cap(M/\mathfrak{C}(\Gamma))\neq\emptyset$ and 
$\varepsilon\setminus(M/\mathfrak{C}(\Gamma))\neq\emptyset$. 
We distinguish the following two cases. 
\begin{itemize}
\item Suppose that $|\varepsilon|=2$. 
There exist $C\in\mathfrak{C}_{{\rm even}}(\Gamma)$ and 
$D\in\mathfrak{C}_{{\rm odd}}(\Gamma)$ such that $\varepsilon=CD$.  
Thus, there exist distinct $c,c'\in V(C)$ and $d\in V(D)$ such that 
$cc'd\in E(\Gamma\bullet\mathbb{H})$. 
For a contradiction, suppose that $C\in M/\mathfrak{C}(\Gamma)$. 
Since $\varepsilon\setminus(M/\mathfrak{C}(\Gamma))\neq\emptyset$, we have 
$D\not\in M/\mathfrak{C}(\Gamma)$. 
By the first point above, $V(C)\subseteq M$ and $V(D)\cap M=\emptyset$. 
We obtain $cc'd\in E(\Gamma\bullet\mathbb{H})$, with $c,c'\in M$ and $d\not\in M$, which contradicts the fact that $M$ is a module of $\Gamma\bullet\mathbb{H}$. 
It follows that $D\in M/\mathfrak{C}(\Gamma)$ and $C\not\in M/\mathfrak{C}(\Gamma)$. 
By the first point above, $V(D)\subseteq M$ and $V(C)\cap M=\emptyset$. 
We obtain $cc'd\in E(\Gamma\bullet\mathbb{H})$, with $c,c'\not\in M$ and $d\in M$. 
Since $M$ is a module of $\Gamma\bullet\mathbb{H}$, we obtain 
\begin{equation}\label{E1_lem1_Crticial_hyper}
\text{$cc'x\in E(H_{(\Gamma,\mathbb{H})})$ for every $x\in M$.}
\end{equation}
Consider $D'\in M/\mathfrak{C}(\Gamma)$. 
By the first point above, $V(D')\subseteq M$. 
Let $d'\in V(D')$. 
By \eqref{E1_lem1_Crticial_hyper}, 
$cc'd'\in E(\Gamma\bullet\mathbb{H})$. 
It follows from the definition of $\Gamma\bullet\mathbb{H}$ that 
$CD'\in E(\mathbb{H})$. 
\item Suppose that $|\varepsilon|=3$. 
There exist distinct $I,J,K\in\mathfrak{C}_{{\rm odd}}(\Gamma)$ such that 
$\varepsilon=IJK$. 
Moreover, we can assume that 
$I\in M/\mathfrak{C}(\Gamma)$ and $K\not\in M/\mathfrak{C}(\Gamma)$. 
By the first point above, $V(I)\subseteq M$ and $V(K)\cap M=\emptyset$. 
By definition of $\Gamma\bullet\mathbb{H}$, we obtain 
$\varphi_I(0)\varphi_{J}(0)\varphi_{K}(0)\in E(\Gamma\bullet\mathbb{H})$. 
Since $M$ is a module of $\Gamma\bullet\mathbb{H}$, we have $\varphi_{J}(0)\not\in M$. 
It follows from the first point above that $V(J)\cap M=\emptyset$. 
Thus, $\varphi_I(0)\varphi_{J}(0)\varphi_{K}(0)\in E(\Gamma\bullet\mathbb{H})$, with 
$\varphi_I(0)\in M$ and $\varphi_{J}(0),\varphi_{K}(0)\not\in M$. 
Since $M$ is a module of $\Gamma\bullet\mathbb{H}$, we obtain 
\begin{equation}\label{E2_lem1_Crticial_hyper}
\text{$x\varphi_{J}(0)\varphi_{K}(0)\in E(\Gamma\bullet\mathbb{H})$ for every 
$x\in M$.}
\end{equation}
Consider any $L\in M/\mathfrak{C}(\Gamma)$. 
By the first point above, $V(L)\subseteq M$
Let $d\in V(L)$. 
By \eqref{E2_lem1_Crticial_hyper}, 
$d\varphi_{J}(0)\varphi_{K}(0)\in E(\Gamma\bullet\mathbb{H})$. 
Since $J,K,L$ are distinct element of $\mathfrak{C}(\Gamma)$, it 
follows from the definition of $\Gamma\bullet\mathbb{H}$ that 
$JKL\in E(\mathbb{H})$. 
\end{itemize}
Consequently, $M/\mathfrak{C}(\Gamma)$ is a module of $\mathbb{H}$. 
\end{proof}

\begin{lem}\label{lem2_Crticial_hyper}
Suppose that $\mathbb{H}$ is connected. 
For a nontrivial module $\mathbb{M}$ of $\mathbb{H}$, we have 
$\mathbb{M}\subseteq\mathfrak{C}_{{\rm even}}(\Gamma)$ or 
$\mathbb{M}\subseteq\mathfrak{C}_{{\rm odd}}(\Gamma)$. 
\end{lem}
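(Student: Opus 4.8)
The plan is to exploit the rigid parity pattern recorded in \eqref{critical_construction_2}: each $2$-edge of $\mathbb{H}$ joins $\mathfrak{C}_{{\rm even}}(\Gamma)$ to $\mathfrak{C}_{{\rm odd}}(\Gamma)$, while each $3$-edge lies entirely inside $\mathfrak{C}_{{\rm odd}}(\Gamma)$. Write $\mathbb{H}_2=(V(\mathbb{H}),E(\mathbb{H})\cap\binom{V(\mathbb{H})}{2})$ and $\mathbb{H}_3=(V(\mathbb{H}),E(\mathbb{H})\cap\binom{V(\mathbb{H})}{3})$, so that a module of $\mathbb{H}$ is precisely a set that is simultaneously a module of $\mathbb{H}_2$ and of $\mathbb{H}_3$. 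Since $\mathbb{M}$ is nontrivial, $\emptyset\neq\mathbb{M}\subsetneq V(\mathbb{H})$, so the connectedness of $\mathbb{H}$ yields an edge $\varepsilon\in E(\mathbb{H})$ with $\varepsilon\cap\mathbb{M}\neq\emptyset$ and $\varepsilon\setminus\mathbb{M}\neq\emptyset$. It suffices to exhibit one such crossing edge and read off the conclusion from its size; I then split on $|\varepsilon|\in\{2,3\}$, which is legitimate by the first condition in \eqref{critical_construction_2}.

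Suppose $|\varepsilon|=3$. Then $\varepsilon\in E(\mathbb{H}_3)$ is a crossing edge for the module $\mathbb{M}$ of $\mathbb{H}_3$, so Definition~\ref{defi_module_hyper} furnishes $m\in\mathbb{M}$ with $\varepsilon\cap\mathbb{M}=\{m\}$ and $(\varepsilon\setminus\{m\})\cup\{n\}\in E(\mathbb{H}_3)$ for every $n\in\mathbb{M}$. By the last condition in \eqref{critical_construction_2}, every edge of $\mathbb{H}_3$ is a subset of $\mathfrak{C}_{{\rm odd}}(\Gamma)$; applying this to $(\varepsilon\setminus\{m\})\cup\{n\}$ forces $n\in\mathfrak{C}_{{\rm odd}}(\Gamma)$. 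As $n\in\mathbb{M}$ was arbitrary, $\mathbb{M}\subseteq\mathfrak{C}_{{\rm odd}}(\Gamma)$.

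Suppose instead $|\varepsilon|=2$, say $\varepsilon=\{a,b\}$ with $a\in\mathbb{M}$ and $b\notin\mathbb{M}$. Since $\mathbb{M}$ is a module of the graph $\mathbb{H}_2$, Definition~\ref{defi_module_hyper} applied to the crossing edge $\varepsilon$ gives $\{b,n\}\in E(\mathbb{H}_2)$ for every $n\in\mathbb{M}$. By the middle condition in \eqref{critical_construction_2} each edge of $\mathbb{H}_2$ has exactly one endpoint in $\mathfrak{C}_{{\rm even}}(\Gamma)$ and one in $\mathfrak{C}_{{\rm odd}}(\Gamma)$, so every $n\in\mathbb{M}$ has parity opposite to that of $b$; hence $\mathbb{M}\subseteq\mathfrak{C}_{{\rm even}}(\Gamma)$ when $b\in\mathfrak{C}_{{\rm odd}}(\Gamma)$, and $\mathbb{M}\subseteq\mathfrak{C}_{{\rm odd}}(\Gamma)$ when $b\in\mathfrak{C}_{{\rm even}}(\Gamma)$. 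In every case the stated dichotomy holds.

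I do not anticipate a genuine obstacle here; the argument is short once the setup is right. The only delicate points are (i) checking that connectedness of the $\{2,3\}$-hypergraph $\mathbb{H}$ really does force a crossing edge for every proper nonempty vertex subset, and (ii) keeping straight which of $\mathbb{H}_2$, $\mathbb{H}_3$ one invokes, since the single phrase ``module of $\mathbb{H}$'' packages both conditions. Note that the proof uses neither \eqref{critical_construction_0} nor \eqref{critical_construction_1}: only the parity pattern \eqref{critical_construction_2} and the connectedness of $\mathbb{H}$ are needed.
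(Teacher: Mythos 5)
Your proof is correct and follows essentially the same route as the paper: connectedness of $\mathbb{H}$ yields an edge $\varepsilon$ crossing $\mathbb{M}$, the module substitution property lets you replace the unique element of $\varepsilon\cap\mathbb{M}$ by any $n\in\mathbb{M}$, and the parity constraints in \eqref{critical_construction_2} then force all of $\mathbb{M}$ into one parity class. The paper merely organizes the same computation differently (assuming $\mathbb{M}$ meets $\mathfrak{C}_{{\rm even}}(\Gamma)$ and deducing $\mathbb{M}\subseteq\mathfrak{C}_{{\rm even}}(\Gamma)$, rather than splitting on $|\varepsilon|$), so the difference is only presentational.
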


\begin{proof}
Suppose that $\mathbb{M}\setminus\mathfrak{C}_{{\rm odd}}(\Gamma)\neq\emptyset$. 
We have to show that $\mathbb{M}\subseteq\mathfrak{C}_{{\rm even}}(\Gamma)$. 
Since $\mathbb{M}\setminus\mathfrak{C}_{{\rm odd}}(\Gamma)\neq\emptyset$, there exists $C\in\mathbb{M}\cap\mathfrak{C}_{{\rm even}}(\Gamma)$. 
Since $\mathbb{H}$ is connected and $\mathbb{M}\subsetneq V(\mathbb{H})$, there exists $\varepsilon\in E(\mathbb{H})$ such that $\varepsilon\cap\mathbb{M}\neq\emptyset$ and $\varepsilon\setminus\mathbb{M}\neq\emptyset$. 
Furthermore, since $\mathbb{M}$ is a module of $\mathbb{H}$, we have $|\varepsilon\cap\mathbb{M}|=1$. 
It follows that $(\varepsilon\setminus\mathbb{M})\cup\{C\}\in E(\mathbb{H})$. 
Since $C\in\mathbb{M}\cap\mathfrak{C}_{{\rm even}}(\Gamma)$, we obtain 
$|\varepsilon\setminus\mathbb{M}|=1$ and 
$\varepsilon\setminus\mathbb{M}\subseteq\mathfrak{C}_{{\rm odd}}(\Gamma)$. 
Lastly, consider $D\in\mathbb{M}$. 
Since $\mathbb{M}$ is a module of $\mathbb{H}$, we have 
$(\varepsilon\setminus\mathbb{M})\cup\{D\}\in E(\mathbb{H})$. 
Since $|(\varepsilon\setminus\mathbb{M})\cup\{D\}|=2$ and 
$(\varepsilon\setminus\mathbb{M})\subseteq\mathfrak{C}_{{\rm odd}}(\Gamma)$, we obtain 
$D\in\mathbb{M}\cap\mathfrak{C}_{{\rm even}}(\Gamma)$. 
Consequently, 
$\mathbb{M}\subseteq\mathfrak{C}_{{\rm even}}(\Gamma)$. 
\end{proof}

The next result follows from Lemmas~\ref{lem1_Crticial_hyper} and \ref{lem2_Crticial_hyper}. 

\begin{cor}\label{cor1_Crticial_hyper}
Suppose that $\mathbb{H}$ is connected. 
For each nontrivial module $M$ of $\Gamma\bullet\mathbb{H}$, we have 
$M\subseteq V_1(\Gamma)$. 
\end{cor}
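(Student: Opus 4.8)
The plan is to transfer the question from $\Gamma\bullet\mathbb{H}$ to $\mathbb{H}$ by means of Lemmas~\ref{lem1_Crticial_hyper} and~\ref{lem2_Crticial_hyper}, and then to control the sizes of the components occurring in the module by inspecting the explicit edge set~\eqref{E1_Rbis_Critical_hyper}. So, let $M$ be a nontrivial module of $\Gamma\bullet\mathbb{H}$ and put $\mathbb{M}=M/\mathfrak{C}(\Gamma)$. By Lemma~\ref{lem1_Crticial_hyper} we have $M=\overline{\mathbb{M}}$, $|\mathbb{M}|\geq 2$, and $\mathbb{M}$ is a module of $\mathbb{H}$. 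Moreover $\mathbb{M}\neq V(\mathbb{H})$, because otherwise $M=\overline{V(\mathbb{H})}=V(\Gamma\bullet\mathbb{H})$, contradicting the fact that $M$ is nontrivial; hence $\mathbb{M}$ is a nontrivial module of $\mathbb{H}$, and Lemma~\ref{lem2_Crticial_hyper} yields $\mathbb{M}\subseteq\mathfrak{C}_{{\rm even}}(\Gamma)$ or $\mathbb{M}\subseteq\mathfrak{C}_{{\rm odd}}(\Gamma)$. Note also that, since the vertex sets of the components of $\Gamma$ are pairwise disjoint, $M=\overline{\mathbb{M}}$ gives $V(C)\subseteq M$ for every $C\in\mathbb{M}$ and $V(D)\cap M=\emptyset$ for every $D\in\mathfrak{C}(\Gamma)\setminus\mathbb{M}$.

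First I would eliminate the possibility $\mathbb{M}\subseteq\mathfrak{C}_{{\rm even}}(\Gamma)$. In that case, fix $C\in\mathbb{M}$. Since $\mathbb{M}$ is nontrivial, $|V(\mathbb{H})|\geq 3$, so $\mathbb{H}$ has no isolated vertex and $C$ lies in some edge of $\mathbb{H}$, necessarily a $2$-edge $CD$ with $D\in\mathfrak{C}_{{\rm odd}}(\Gamma)$, whence $D\notin\mathbb{M}$. As $v(C)$ is even we have $w(C)\geq 1$, so $E_{CD}$ contains the edge $e=\varphi_C(0)\varphi_C(1)\varphi_D(0)$ of $\Gamma\bullet\mathbb{H}$. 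But then $e\cap M=\{\varphi_C(0),\varphi_C(1)\}$ has two elements while $e\setminus M=\{\varphi_D(0)\}\neq\emptyset$, contradicting the definition of a module. Therefore $\mathbb{M}\subseteq\mathfrak{C}_{{\rm odd}}(\Gamma)$.

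It now suffices to show that $v(C)=1$ for every $C\in\mathbb{M}$, since this gives $\mathbb{M}\subseteq\mathfrak{C}_1(\Gamma)$ and hence $M=\overline{\mathbb{M}}\subseteq V_1(\Gamma)$. Suppose, for a contradiction, that some $C\in\mathbb{M}$ has $v(C)\geq 3$; then $w(C)\geq 1$ and $\varphi_C(1)\in V(C)\subseteq M$. Because $\mathbb{H}$ is connected and $\mathbb{M}$ is a nontrivial module, there is an edge $\varepsilon$ of $\mathbb{H}$ with $\varepsilon\cap\mathbb{M}\neq\emptyset$ and $\varepsilon\not\subseteq\mathbb{M}$; since $\mathbb{M}$ is a module, $|\varepsilon\cap\mathbb{M}|=1$, so replacing the unique element of $\varepsilon\cap\mathbb{M}$ by $C$ (using that $\mathbb{M}$ is a module of $\mathbb{H}$ and $C\in\mathbb{M}$) produces an edge $\varepsilon'$ of $\mathbb{H}$ with $\varepsilon'\cap\mathbb{M}=\{C\}$. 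I would then distinguish two cases. If $|\varepsilon'|=2$, then $\varepsilon'=AC$ with $A\in\mathfrak{C}_{{\rm even}}(\Gamma)\setminus\mathbb{M}$ and $w(A)\geq 1$, and I consider $e'=\varphi_A(0)\varphi_A(1)\varphi_C(0)\in E_{\varepsilon'}$. If $|\varepsilon'|=3$, then $\varepsilon'=CJK$ with distinct $J,K\in\mathfrak{C}_{{\rm odd}}(\Gamma)\setminus\mathbb{M}$, and I consider $e'=\varphi_C(0)\varphi_J(0)\varphi_K(0)\in E_{\varepsilon'}$. In either case $e'\cap M=\{\varphi_C(0)\}$ and $e'\setminus M\neq\emptyset$, so, $M$ being a module of $\Gamma\bullet\mathbb{H}$, we get $(e'\setminus\{\varphi_C(0)\})\cup\{\varphi_C(1)\}\in E(\Gamma\bullet\mathbb{H})$; that is, $\varphi_A(0)\varphi_A(1)\varphi_C(1)\in E(\Gamma\bullet\mathbb{H})$ in the first case, and $\varphi_J(0)\varphi_K(0)\varphi_C(1)\in E(\Gamma\bullet\mathbb{H})$ in the second.

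The main obstacle — and essentially the only step that is not bookkeeping — is to contradict these two memberships directly from~\eqref{E1_Rbis_Critical_hyper}. In the first case, any edge of $\Gamma\bullet\mathbb{H}$ with two vertices in a common even component $A$ must belong to some $E_\varepsilon$ with $\varepsilon=AD$ a $2$-edge, and hence has its third vertex of the form $\varphi_D(2k)$, i.e.\ at an even index in an odd component; this excludes $\varphi_C(1)$, which sits at the odd index $1$. In the second case, any edge of $\Gamma\bullet\mathbb{H}$ meeting three pairwise distinct odd components must belong to some $E_\varepsilon$ with $\varepsilon$ a $3$-edge, and hence has all three of its vertices at even indices, which again excludes $\varphi_C(1)$. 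In both cases we reach a contradiction, so every $C\in\mathbb{M}$ satisfies $v(C)=1$, and the corollary follows.
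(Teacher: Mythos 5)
Your proof is correct. The skeleton coincides with the paper's: both pass to $\mathbb{M}=M/\mathfrak{C}(\Gamma)$ via Lemma~\ref{lem1_Crticial_hyper}, invoke Lemma~\ref{lem2_Crticial_hyper} to split into the even and odd cases, and then derive contradictions from a crossing edge of $\mathbb{H}$ obtained by connectedness together with the substitution property of the module $\mathbb{M}$. The one genuine difference is in how the contradictions are closed. The paper reuses Lemma~\ref{Cl0_Rbis_Critical_hyper}: in the even case $V(C)$ would be a nontrivial module of the prime hypergraph $(\Gamma\bullet\mathbb{H})[\overline{\varepsilon}]$, and in the odd case $V(D)$ would be a nontrivial module of $(\Gamma\bullet\mathbb{H})[\overline{(\varepsilon\setminus\{C\})\cup\{D\}}]$. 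You instead argue directly from the explicit description \eqref{E1_Rbis_Critical_hyper}: in the even case you exhibit an edge $\varphi_C(0)\varphi_C(1)\varphi_D(0)$ meeting $M$ in two vertices, and in the odd case you use the module property of $M$ in $\Gamma\bullet\mathbb{H}$ to force an edge containing $\varphi_C(1)$ at an odd index, which the parity constraints in $E_\varepsilon$ forbid. Your route is more elementary and self-contained for this corollary (it does not need the primality of the edge-induced subhypergraphs), at the cost of a somewhat longer case analysis over the three kinds of sets $E_C$, $E_{\varepsilon}$ with $|\varepsilon|=2$, and $E_\varepsilon$ with $|\varepsilon|=3$; the paper's version is shorter here because Lemma~\ref{Cl0_Rbis_Critical_hyper} has already absorbed that bookkeeping. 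All the index checks in your argument ($w(A)\geq 1$ for an even component, $\varphi_C(1)$ sitting at an odd position, the impossibility of an edge meeting a component in two vertices lying in any $E_\varepsilon$ of the wrong type) are sound.
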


\begin{proof}
It follows from Lemma~\ref{lem1_Crticial_hyper} that $M/\mathfrak{C}(\Gamma)$ is a nontrivial module of $\mathbb{M}$. 
By Lemma~\ref{lem2_Crticial_hyper}, 
$M/\mathfrak{C}(\Gamma)\subseteq\mathfrak{C}_{{\rm even}}(\Gamma)$ or 
$M/\mathfrak{C}(\Gamma)\subseteq\mathfrak{C}_{{\rm odd}}(\Gamma)$. 

For a contradiction, suppose that 
$M/\mathfrak{C}(\Gamma)\subseteq\mathfrak{C}_{{\rm even}}(\Gamma)$. 
Since $\mathbb{H}$ is connected, there exists $\varepsilon\in E(\mathbb{H})$ such that 
$\varepsilon\cap(M/\mathfrak{C}(\Gamma))\neq\emptyset$ and 
$\varepsilon\setminus(M/\mathfrak{C}(\Gamma))\neq\emptyset$. 
Since $M/\mathfrak{C}(\Gamma)\subseteq\mathfrak{C}_{{\rm even}}(\Gamma)$, there exist $C\in M/\mathfrak{C}(\Gamma)$ and $D\in\mathfrak{C}_{{\rm odd}}(\Gamma)$ such that $\varepsilon=CD$. 
By Lemma~\ref{lem1_Crticial_hyper}, $M=\overline{M/\mathfrak{C}(\Gamma)}$. 
Therefore, we obtain $V(C)\subseteq M$ and $V(D)\cap M=\emptyset$. 
It follows that $V(C)$ is a nontrivial module of $(\Gamma\bullet\mathbb{H})[V(C)\cup V(D)]$, which contradicts Lemma~\ref{Cl0_Rbis_Critical_hyper}. 
Consequently, we have 
$M/\mathfrak{C}(\Gamma)\subseteq\mathfrak{C}_{{\rm odd}}(\Gamma)$. 

For a contradiction, suppose that $M\setminus V_1(\Gamma)\neq\emptyset$. 
There exists 
$D\in(M/\mathfrak{C}(\Gamma))\cap(\mathfrak{C}_{{\rm odd}}(\Gamma)\setminus\mathfrak{C}_{1}(\Gamma))$. 
Since $\mathbb{H}$ is connected, there exists $\varepsilon\in E(\mathbb{H})$ such that 
$\varepsilon\cap(M/\mathfrak{C}(\Gamma))\neq\emptyset$ and 
$\varepsilon\setminus(M/\mathfrak{C}(\Gamma))\neq\emptyset$. 
Since $M/\mathfrak{C}(\Gamma)$ is a module of $\mathbb{M}$, there exists 
$C\in M/\mathfrak{C}(\Gamma)$ such that $\varepsilon\cap(M/\mathfrak{C}(\Gamma))=\{C\}$ and 
$(\varepsilon\setminus\{C\})\cup\{D\}\in E(\mathbb{H})$. 
By Lemma~\ref{lem1_Crticial_hyper}, $V(D)\subseteq M$. 
We obtain that $V(D)$ is a nontrivial module of 
$(\Gamma\bullet\mathbb{H})[\overline{(e\setminus\{C\})\cup\{D\}}]$, which contradicts Lemma~\ref{Cl0_Rbis_Critical_hyper}. 
Consequently, $M\subseteq V_1(\Gamma)$. 
\end{proof}

\begin{lem}\label{lem3_Crticial_hyper}
Given a module $\mathbb{M}$ of $\mathbb{H}$, if 
$\mathbb{M}\subseteq\mathfrak{C}_1(\Gamma)$, then 
$\overline{\mathbb{M}}$ is a module of $\Gamma\bullet\mathbb{H}$. 
\end{lem}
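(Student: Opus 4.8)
The plan is to show directly that $\overline{\mathbb{M}}=\bigcup_{C\in\mathbb{M}}V(C)$ satisfies Definition~\ref{defi_module_hyper} in $\Gamma\bullet\mathbb{H}$, using the hypothesis $\mathbb{M}\subseteq\mathfrak{C}_1(\Gamma)$ in an essential way: every $C\in\mathbb{M}$ is a singleton component $\{v\}$ of $\Gamma$, so $\overline{\mathbb{M}}$ is just a set of isolated vertices of the primality graph, and for each such $C$ we have $w(C)=0$ and $V(C)=\{\varphi_C(0)\}$. First I would pick an arbitrary $e\in E(\Gamma\bullet\mathbb{H})$ with $e\cap\overline{\mathbb{M}}\neq\emptyset$ and $e\setminus\overline{\mathbb{M}}\neq\emptyset$, and trace through the three ways such an edge can arise according to \eqref{E1_Rbis_Critical_hyper}: either $e\in E_C$ for some odd component $C\in\mathfrak{C}_{\mathrm{odd}}(\Gamma)\setminus\mathfrak{C}_1(\Gamma)$, or $e\in E_\varepsilon$ for a $2$-edge $\varepsilon=CD$ of $\mathbb{H}$, or $e\in E_\varepsilon$ for a $3$-edge $\varepsilon=IJK$ of $\mathbb{H}$.

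The first case is immediately impossible: if $e\in E_C$ with $v(C)\ge 3$, then $e\subseteq V(C)$, and since $C\notin\mathfrak{C}_1(\Gamma)$ while every member of $\mathbb{M}$ is in $\mathfrak{C}_1(\Gamma)$, the component $C$ is not in $\mathbb{M}$, so $V(C)\cap\overline{\mathbb{M}}=\emptyset$, contradicting $e\cap\overline{\mathbb{M}}\ne\emptyset$. So $e\in E_\varepsilon$ for some $\varepsilon\in E(\mathbb{H})$. Next I would observe that the components of $\Gamma$ meeting $e$ are exactly the elements of $\varepsilon$ (this is how $E_\varepsilon$ is defined), and that $e$ meets each $C\in\varepsilon$ with $|e\cap V(C)|=1$ precisely when $C\in\mathfrak{C}_{\mathrm{odd}}(\Gamma)$ and with $|e\cap V(C)|=2$ when $C\in\mathfrak{C}_{\mathrm{even}}(\Gamma)$. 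Now a component $C$ that lies in $\mathbb{M}$ is a singleton, hence odd, hence contributes exactly one vertex $\varphi_C(0)$ to $e$; thus the set $\varepsilon\cap\mathbb{M}$ of components of $\varepsilon$ lying in $\mathbb{M}$ contributes exactly $|\varepsilon\cap\mathbb{M}|$ vertices to $e\cap\overline{\mathbb{M}}$, and $|\varepsilon\cap\mathbb{M}|\ge 1$ since $e\cap\overline{\mathbb{M}}\ne\emptyset$. Because $\mathbb{M}$ is a module of $\mathbb{H}$ and $\varepsilon\setminus\mathbb{M}\ne\emptyset$ (from $e\setminus\overline{\mathbb{M}}\ne\emptyset$), we must have $|\varepsilon\cap\mathbb{M}|=1$; write $\varepsilon\cap\mathbb{M}=\{C_0\}$ and let $m=\varphi_{C_0}(0)$, so $e\cap\overline{\mathbb{M}}=\{m\}$, which is the first requirement of Definition~\ref{defi_module_hyper}.

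For the second requirement, fix any $n\in\overline{\mathbb{M}}$; then $n=\varphi_{D_0}(0)$ for a unique $D_0\in\mathbb{M}\subseteq\mathfrak{C}_1(\Gamma)$, and I must show $(e\setminus\{m\})\cup\{n\}\in E(\Gamma\bullet\mathbb{H})$. Since $\mathbb{M}$ is a module of $\mathbb{H}$, $(\varepsilon\setminus\{C_0\})\cup\{D_0\}\in E(\mathbb{H})$; call this edge $\varepsilon'$, and note $|\varepsilon'|=|\varepsilon|$ and $\varepsilon'$ has the same parity profile as $\varepsilon$ since $C_0,D_0$ are both singletons (both odd). I would then check that $(e\setminus\{m\})\cup\{n\}$ is precisely one of the edges placed in $E_{\varepsilon'}$ by the construction: the coordinates coming from the components in $\varepsilon\setminus\{C_0\}$ are unchanged and satisfy the same inequalities ($0\le i\le j\le w(C)-1$, $0\le k\le w(D)$, etc.), and the new vertex $n=\varphi_{D_0}(0)$ fills the slot for $D_0$ with parameter $0\le 0\le w(D_0)=0$, which is a legal index. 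This requires a short case split on whether $\varepsilon$ has size $2$ (so $C_0$ is the odd endpoint $D$ of $CD$, contributing the $2k$-coordinate) or size $3$ (so $C_0$ is one of $I,J,K$); in each case the formula for $E_{\varepsilon'}$ manifestly contains $(e\setminus\{m\})\cup\{n\}$. I expect the main bookkeeping obstacle to be exactly this verification — matching the explicit index constraints in the definitions of $E_\varepsilon$ for the $|\varepsilon|=2$ and $|\varepsilon|=3$ cases and confirming that deleting $m=\varphi_{C_0}(0)$ and inserting $n=\varphi_{D_0}(0)$ stays inside the constraint region — but since the deleted and inserted indices are both $0$ and $w(C_0)=w(D_0)=0$, no inequality is violated. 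Having checked both requirements of Definition~\ref{defi_module_hyper}, we conclude that $\overline{\mathbb{M}}$ is a module of $\Gamma\bullet\mathbb{H}$.
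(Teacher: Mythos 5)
Your proposal is correct and follows essentially the same route as the paper's proof: both verify Definition~\ref{defi_module_hyper} directly by observing that any edge of $\Gamma\bullet\mathbb{H}$ straddling $\overline{\mathbb{M}}$ must come from some $\varepsilon\in E(\mathbb{H})$ meeting $\mathbb{M}$ in exactly one (singleton, hence odd) component, and then transferring $\varepsilon$ via the module property of $\mathbb{M}$ in $\mathbb{H}$ and re-reading the definition of $E_{\varepsilon'}$; your split on $|\varepsilon|=2$ versus $|\varepsilon|=3$ matches the paper's split on whether the two vertices outside $\overline{\mathbb{M}}$ lie in the same component. The only cosmetic difference is that the paper rules out $|e\cap\overline{\mathbb{M}}|=2$ by a direct contradiction with a forced $3$-edge of $\mathbb{H}$ meeting $\mathbb{M}$ twice, whereas you count $|\varepsilon\cap\mathbb{M}|$; the content is the same.
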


\begin{proof}
Let $e\in E(\Gamma\bullet\mathbb{H})$ such that 
$e\cap\overline{\mathbb{M}}\neq\emptyset$ and 
$e\setminus\overline{\mathbb{M}}\neq\emptyset$. 
There exist $c\in e\cap\overline{\mathbb{M}}$ and 
$x\in e\setminus\overline{\mathbb{M}}$. 
We have $\{c\}\in\mathfrak{C}(\Gamma)$. 
Consider $C_x\in\mathfrak{C}(\Gamma)$ such that $x\in C_x$. 
For a contradiction, suppose that $e=cdx$, where $d\in\overline{\mathbb{M}}$. 
We have $\{d\}\in\mathfrak{C}(\Gamma)$. 
Since $cdx\in E(\Gamma\bullet\mathbb{H})$, we obtain $C_cC_dC_x\in E(\mathbb{H})$, which contradicts the fact that $\mathbb{M}$ is a module of $\mathbb{H}$. 
It follows that $e=cxy$, where 
$y\in V(\Gamma\bullet\mathbb{H})\setminus\overline{\mathbb{M}}$. 
Consider $C_y\in\mathfrak{C}(\Gamma)$ such that $y\in C_y$. 
Given $d\in\overline{\mathbb{M}}$, we have to verify that 
$dxy\in E(\Gamma\bullet\mathbb{H})$. 
We have $\{d\}\in\mathfrak{C}(\Gamma)$. 
We distinguish the following two cases. 
\begin{itemize}
\item Suppose that $C_x=C_y$.  
Since $cxy\in E(\Gamma\bullet\mathbb{H})$, we obtain 
$C_x\in\mathfrak{C}_{{\rm even}}(\Gamma)$, 
$\{x,y\}=\{\varphi_{C_x}(2i),\varphi_{C_x}(2j+1)\}$, where $0\leq i\leq j\leq w(C_x)-1$, and $\{c\}C_x\in E(\mathbb{H})$. 
Since $\mathbb{M}$ is a module of $\mathbb{H}$, $\{d\}C_x\in E(\mathbb{H})$. 
Since $C_x\in\mathfrak{C}_{{\rm even}}(\Gamma)$ and 
$\{x,y\}=\{\varphi_{C_x}(2i),\varphi_{C_x}(2j+1)\}$, where $0\leq i\leq j\leq w(C_x)-1$, we obtain $dxy\in E(H_{(\Gamma,\mathbb{H})})$. 
\item Suppose that $C_x\neq C_y$. 
Since $cxy\in E(H_{(\Gamma,\mathbb{H})})$, we obtain 
$C_x,C_y\in\mathfrak{C}_{{\rm odd}}(\Gamma)$, 
$\{x,y\}=\{\varphi_{C_x}(2i),\varphi_{C_y}(2j)\}$, where $0\leq i\leq w(C_x)$ and 
$0\leq j\leq w(C_y)$, and $\{c\}C_xC_y\in E(\mathbb{H})$. 
Since $\mathbb{M}$ is a module of $\mathbb{H}$, $\{d\}C_xC_y\in E(\mathbb{H})$. 
Since $C_x,C_y\in\mathfrak{C}_{{\rm odd}}(\Gamma)$ and 
$\{x,y\}=\{\varphi_{C_x}(2i),\varphi_{C_y}(2j)\}$, where $0\leq i\leq w(C_x)$ and 
$0\leq j\leq w(C_y)$, we obtain $dxy\in E(H_{(\Gamma,\mathbb{H})})$. \qedhere
\end{itemize}
\end{proof}

\begin{proof}[Proof of Proposition~\ref{Prop1_Critical_hyper}]
To begin, suppose that $\Gamma\bullet\mathbb{H}$ is decomposable. 
If $\mathbb{H}$ is disconnected, then Assertion~(C1) does not hold. 
Hence, suppose that $\mathbb{H}$ is connected. 
Consider a nontrivial module $M$ of $\Gamma\bullet\mathbb{H}$. 
It follows from Lemma~\ref{lem1_Crticial_hyper} that 
$M/\mathfrak{C}(\Gamma)$ is a nontrivial module of $\mathbb{H}$. 
Furthermore, $M\subseteq V_1(\Gamma)$ by Corollary~\ref{cor1_Crticial_hyper}. 
Thus, $M/\mathfrak{C}(\Gamma)\subseteq\mathfrak{C}_{1}(\Gamma)$. 
Consequently, Assertion~(C2) does not hold. 

Conversely, if $\mathbb{H}$ is disconnected, then 
it follows from Lemma~\ref{lem1bis_Crticial_hyper} that 
$\Gamma\bullet\mathbb{H}$ is decomposable. 
Hence, suppose that $\mathbb{H}$ is connected. 
Moreover, suppose that there exists a nontrivial module $\mathbb{M}$ of $\mathbb{H}$ 
such that $\mathbb{M}\subseteq\mathfrak{C}_{1}(\Gamma)$. 
It follows from Lemma~\ref{lem3_Crticial_hyper} that 
$\overline{\mathbb{M}}$ is a nontrivial module of $\Gamma\bullet\mathbb{H}$. 
Hence, $\Gamma\bullet\mathbb{H}$ is decomposable. 
\end{proof}

\section{Proof of Theorem~\ref{thm_2_main}}

In the next remark, we describe a simple way to obtain automorphisms of $\Gamma\bullet\mathbb{H}$. 

\begin{rem}\label{Auto_critical_hyper}
For every $C\in\mathfrak{C}(\Gamma)$, consider the function 
\begin{equation*}
\begin{array}{rccl}
F_C:&V(\Gamma\bullet\mathbb{H})&\longrightarrow&V(\Gamma\bullet\mathbb{H})\\
&\varphi_C(m)\ (0\leq m\leq v(C)-1)&\longmapsto&\varphi_C(v(C)-1-m)\\
&v\not\in V(C)&\longmapsto&v.
\end{array}
\end{equation*}
Clearly, $F_C$ is an automorphism of $\Gamma$. 
It is easy to verify that $F_C$ is an automorphism of $\Gamma\bullet\mathbb{H}$. 
Moreover, consider a permutation $F$ of $V(\Gamma\bullet\mathbb{H})$ such that $F(v)=v$ for each $v\in V_1(\Gamma)$. 
If for every $C\in\mathfrak{C}(\Gamma)$, $F_{\restriction V(C)}=(F_C)_{\restriction V(C)}$ or ${\rm Id}_{V(C)}$, then $F$ is an automorphism of $\Gamma\bullet\mathbb{H}$. 
\end{rem}

The next three results are useful to study the criticality of $\Gamma\bullet\mathbb{H}$. 
They allow us to prove Theorem~\ref{thm_2_main}. 

\begin{lem}\label{lem4_Crticial_hyper}
Given $C\in\mathfrak{C}_{{\rm odd}}(\Gamma)\setminus\mathfrak{C}_{1}(\Gamma)$, the following assertions hold
\begin{enumerate}
\item $V(\Gamma\bullet\mathbb{H})\setminus\{\varphi_C(0),\varphi_C(1)\}$ is a module of 
$(\Gamma\bullet\mathbb{H})-\varphi_C(0)$;
\item $V(\Gamma\bullet\mathbb{H})\setminus\{\varphi_C(2w(C)-1),\varphi_C(2w(C))\}$ is a module of 
$(\Gamma\bullet\mathbb{H})-\varphi_C(2w(C))$;
\item if $w(C)\geq 1$, then for $m\in\{1,\ldots,2w(C)-1\}$, we have 
$\{\varphi_C(m-1),\varphi_C(m+1)\}$ is a module of $(\Gamma\bullet\mathbb{H})-\varphi_C(m)$.
\end{enumerate}
\end{lem}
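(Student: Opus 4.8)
The plan is to verify each of the three assertions by checking directly, from the edge description of $\Gamma\bullet\mathbb{H}$ in \eqref{E1_Rbis_Critical_hyper}, that the prescribed set satisfies the module condition in Definition~\ref{defi_module_hyper} for the appropriate induced subhypergraph. The guiding principle is that all information about $\varphi_C(0),\varphi_C(1)$ (resp.\ $\varphi_C(2w(C)-1),\varphi_C(2w(C))$, resp.\ $\varphi_C(m-1),\varphi_C(m+1)$) comes either from the ``internal'' edge set $E_C=\varphi_C(E(C_3(U_{v(C)})))$ or from the ``external'' sets $E_\varepsilon$ attached to edges $\varepsilon\in E(\mathbb{H})$ containing $C$, and in both cases the relevant vertices occur only with even labels in the external edges and occur interchangeably among the internal edges because $U_{v(C)}$, viewed through $C_3$, is critical with the right witnessing modules (Remark~\ref{v(C)_odd}).

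\textbf{Assertions (1) and (2).} For the first assertion, I would set $M=V(\Gamma\bullet\mathbb{H})\setminus\{\varphi_C(0),\varphi_C(1)\}$ and take any $e\in E((\Gamma\bullet\mathbb{H})-\varphi_C(0))=\{f\in E(\Gamma\bullet\mathbb{H}):\varphi_C(0)\notin f\}$ with $e\cap M\neq\emptyset$ and $e\setminus M\neq\emptyset$; since $\varphi_C(0)\notin e$, necessarily $e\setminus M=\{\varphi_C(1)\}$. I must show $(e\setminus\{\varphi_C(1)\})\cup\{n\}\in E(\Gamma\bullet\mathbb{H})$ for every $n\in M$. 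Now $\varphi_C(1)$ has an odd label, so by the edge description $\varphi_C(1)$ never appears in any external edge $E_\varepsilon$ (there the vertices of an odd component appear with even labels, and in a component of a size-$2$ edge of $\mathbb H$ the odd-labelled vertex of the even component is fixed but here $C$ is odd); hence $e\in E_C=\varphi_C(E(C_3(U_{v(C)})))$. By Remark~\ref{v(C)_odd}, $V(C)\setminus\{\varphi_C(0),\varphi_C(1)\}$ is a module of $(\Gamma\bullet\mathbb{H})[V(C)]-\varphi_C(0)$, so replacing $\varphi_C(1)$ by any vertex of $V(C)\setminus\{\varphi_C(0),\varphi_C(1)\}$ keeps the edge; and replacing $\varphi_C(1)$ by a vertex outside $V(C)$ is impossible to need, because $e\subseteq V(C)$ forces $(e\setminus\{\varphi_C(1)\})\cup\{n\}$ with $n\notin V(C)$ to still be an edge only if it lies in some $E_\varepsilon$, which it does by the external-edge description whenever $e\setminus\{\varphi_C(1)\}$ together with $n$ matches the required even-label pattern — this is where I would invoke the $w(C)=1$ case (then $E_C=\{V(C)\}$ and the three trivial module facts of Remark~\ref{v(C)_odd}) separately from $w(C)\geq 2$. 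The second assertion is the mirror image under the automorphism $F_C$ of Remark~\ref{Auto_critical_hyper}, which swaps $\varphi_C(m)\leftrightarrow\varphi_C(v(C)-1-m)$ and fixes everything outside $V(C)$; since $F_C$ is an automorphism of $\Gamma\bullet\mathbb{H}$ carrying $\{\varphi_C(0),\varphi_C(1)\}$ to $\{\varphi_C(2w(C)),\varphi_C(2w(C)-1)\}$ and $\varphi_C(0)$ to $\varphi_C(2w(C))$, assertion (2) follows formally from assertion (1).

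\textbf{Assertion (3).} Assume $w(C)\geq 1$ and fix $m\in\{1,\ldots,2w(C)-1\}$; set $M=\{\varphi_C(m-1),\varphi_C(m+1)\}$ and take $e\in E(\Gamma\bullet\mathbb{H})$ with $\varphi_C(m)\notin e$, $e\cap M\neq\emptyset$, $e\setminus M\neq\emptyset$. I would first rule out $M\subseteq e$: if $\varphi_C(m-1),\varphi_C(m+1)\in e$ and $\varphi_C(m)\notin e$, then (since in every edge touching $V(C)$ in two vertices those two vertices are $\varphi_C(2i),\varphi_C(2j+1)$ with $i\leq j$ in the external even-component case, or lie in a $C_3(U)$-pattern in the internal case) one checks the pair $\{\varphi_C(m-1),\varphi_C(m+1)\}$ has both labels of the same parity and thus cannot be the $\{2i,2j+1\}$-pair of an external edge, while inside $E_C$ the $C_3(U_{v(C)})$-structure forbids an edge on two vertices whose labels differ by $2$ and omit the middle one — more precisely this is exactly the statement that $\{\varphi_C(m-1),\varphi_C(m+1)\}$ is a module of $(\Gamma\bullet\mathbb H)[V(C)]-\varphi_C(m)$, reducing the internal case to Remark~\ref{v(C)_odd} (the $w(C)=1$ case is again immediate since then $m=1$ and $E_C=\{V(C)\}$). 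Having $|e\cap M|=1$, say $\varphi_C(m-1)\in e$, $\varphi_C(m+1)\notin e$, I must produce the edge with $\varphi_C(m-1)$ replaced by $\varphi_C(m+1)$: if $e\in E_C$ this is Remark~\ref{v(C)_odd}(3); if $e\in E_\varepsilon$ for an external $\varepsilon\ni C$, then $\varphi_C(m-1)$ occupies an even-label slot of the pattern defining $E_\varepsilon$, forcing $m-1$ even and hence $m+1$ even as well, so $\varphi_C(m+1)$ is an admissible substitute in the same slot (here the inequality constraints $0\le i\le j\le w(C)-1$ or $0\le i\le w(C)$ are preserved because only one coordinate changes and it changes by one step within range), giving the required edge.

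\textbf{Main obstacle.} The bookkeeping obstacle is the case analysis for external edges: in the size-$2$ edges $E_\varepsilon$ of $\mathbb{H}$ the two slots of the even component $C$ are constrained by $0\le i\le j\le w(C)-1$, in the size-$3$ edges all three components contribute even-label slots with independent ranges, and one must check for each shape that replacing the relevant vertex by $n\in M$ (in (1),(2)) or by $\varphi_C(m+1)$ (in (3)) lands back inside the allowed index range and parity class. I expect that once one records that odd-labelled vertices of odd components never appear in external edges, and that even-labelled vertices appear only in ``monotone-indexed'' slots, all three assertions fall out uniformly, with the genuine content concentrated in Remark~\ref{v(C)_odd} (i.e.\ the criticality of $C_3(U_{v(C)})$ with its explicit witnessing modules) and in the automorphism $F_C$ that upgrades assertion (1) to assertion (2).
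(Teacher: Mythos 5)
Your treatment of assertion (3) (and the reduction of (2) to (1) via the automorphism $F_C$) is essentially the paper's argument: internal crossing edges are handled by Remark~\ref{v(C)_odd}, external edges meet $V(C)$ in a single even-labelled slot where any $\varphi_C(2k)$, $0\leq k\leq w(C)$, may be substituted. However, your proof of assertion (1) has a genuine gap, caused by a misapplication of Definition~\ref{defi_module_hyper}. Here $M=V(\Gamma\bullet\mathbb{H})\setminus\{\varphi_C(0),\varphi_C(1)\}$ is the complement of the single vertex $\varphi_C(1)$ in $(\Gamma\bullet\mathbb{H})-\varphi_C(0)$, so any crossing edge $e$ would contain $\varphi_C(1)$ together with two vertices of $M$, i.e.\ $|e\cap M|=2$; the definition demands $e\cap M=\{m\}$ for a single $m\in M$, so no substitution argument can save such an $e$. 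Consequently assertion (1) is equivalent to the statement that no edge of $(\Gamma\bullet\mathbb{H})-\varphi_C(0)$ contains $\varphi_C(1)$, and the condition you set out to verify --- that $(e\setminus\{\varphi_C(1)\})\cup\{n\}\in E(\Gamma\bullet\mathbb{H})$ for all $n\in M$, i.e.\ swapping the vertex \emph{outside} the module --- is not the module condition and would not establish the claim even if proved.

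The missing fact, which is the whole content of (1), is that every edge of $\Gamma\bullet\mathbb{H}$ containing $\varphi_C(1)$ lies in $E_C$ \emph{and contains $\varphi_C(0)$}: you correctly note that odd-labelled vertices of an odd component occur in no external set $E_\varepsilon$, but in $E(C_3(U_{v(C)}))=\{(2i)(2j+1)(2k):0\leq i\leq j<k\leq w(C)\}$ an edge containing the vertex $1$ forces $2j+1=1$, hence $j=0$, hence $i=0$, so it contains the vertex $0$. Thus $\varphi_C(1)$ meets no edge once $\varphi_C(0)$ is deleted, and the module condition holds vacuously; this is exactly how the paper argues. Note also that your intermediate claim that, by Remark~\ref{v(C)_odd}, ``replacing $\varphi_C(1)$ by any vertex of $V(C)\setminus\{\varphi_C(0),\varphi_C(1)\}$ keeps the edge'' is false as stated: in $C_3(U_5)$ the triple $\{0,1,2\}$ is an edge while $\{0,3,2\}$ is not; the Remark asserts a module property, not this substitution property. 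Since you deduce (2) from (1), the gap propagates to (2) as well.
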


\begin{proof}
We begin with the first assertion. 
It follows from Remark~\ref{v(C)_odd} that $V(C)\setminus\{\varphi_C(0),\varphi_C(1)\}$ 
is a module of $((\Gamma\bullet\mathbb{H})[V(C)])-\varphi_C(0)$. 
For every $e\in E(\Gamma\bullet\mathbb{H})$, if $\varphi_C(1)\in e$, then $e\subseteq V(C)$ and $\varphi_C(0)\in e$. 
Therefore, $V(C)\setminus\{\varphi_C(0),\varphi_C(1)\}$ 
is a module of $(\Gamma\bullet\mathbb{H})-\varphi_C(0)$. 

For the second assertion, recall that the function $F_C$ is an automorphism of 
$\Gamma\bullet\mathbb{H}$ (see Remark~\ref{Auto_critical_hyper}). 
It follows from the first assertion above that $F_C(V(C)\setminus\{\varphi_C(0),\varphi_C(1)\})$ 
is a module of $(\Gamma\bullet\mathbb{H})-F_C(\varphi_C(0))$, that is, $V(C)\setminus\{\varphi_C(2w(C)-1),\varphi_C(2w(C))\}$ 
is a module of $(\Gamma\bullet\mathbb{H})-\varphi_C(2w(C))$. 

For the third assertion, suppose that $w(C)\geq 1$. 
Let $i\in\{0,\ldots,w(C)-1\}$. 
It follows from Remark~\ref{v(C)_odd} that $\{\varphi_C(2i),$ $\varphi_C(2i+2)\}$ is a module of 
$((\Gamma\bullet\mathbb{H})[V(C)])-\varphi_C(2i+1)$. 
Let $e\in E(\Gamma\bullet\mathbb{H})$ such that $e\cap\{\varphi_C(2i),\varphi_C(2i+2)\}\neq\emptyset$ and $e\setminus V(C)\neq\emptyset$. 
By definition of $\Gamma\bullet\mathbb{H}$, 
$|e\cap\{\varphi_C(2i),\varphi_C(2i+2)\}|=1$, $|e\setminus V(C)|=2$, and 
$(e\setminus V(C))\cup\{\varphi_C(2i)\},(e\setminus V(C))\cup\{\varphi_C(2i+2)\}\in E(H_{(\Gamma,\mathbb{H})})$. 
Thus, $\{\varphi_C(2i),\varphi_C(2i+2)\}$ is a module of 
$(\Gamma\bullet\mathbb{H})-\varphi_C(2i+1)$. 
Lastly, suppose that $w(C)\geq 2$, and consider $i\in\{1,\ldots,w(C)-1\}$. 
It follows from Remark~\ref{v(C)_odd} that 
$\{\varphi_C(2i-1),\varphi_C(2i+1)\}$ is a module of 
$((\Gamma\bullet\mathbb{H})[V(C)])-\varphi_C(2i)$. 
For every $e\in E(\Gamma\bullet\mathbb{H})$, if $e\cap\{\varphi_C(2i-1),\varphi_C(2i+1)\}\neq\emptyset$, then $e\subseteq V(C)$. 
Therefore,  $\{\varphi_C(2i-1),\varphi_C(2i+1)\}$ is a module of 
$(\Gamma\bullet\mathbb{H})-\varphi_C(2i)$. 
\end{proof}

\begin{lem}\label{lem5_Crticial_hyper}
Given $C\in\mathfrak{C}_{{\rm even}}(\Gamma)$, the following assertions hold
\begin{enumerate}
\item $V(\Gamma\bullet\mathbb{H})\setminus\{\varphi_C(0),\varphi_C(1)\}$ is a module of 
$(\Gamma\bullet\mathbb{H})-\varphi_C(0)$;
\item $V(\Gamma\bullet\mathbb{H})\setminus\{\varphi_C(2w(C)-2),\varphi_C(2w(C)-1)\}$ is a module of $(\Gamma\bullet\mathbb{H})-\varphi_C(2w(C)-1)$;
\item if $w(C)\geq 2$, then for $m\in\{1,\ldots,2w(C)-2\}$, we have 
$\{\varphi_C(m-1),\varphi_C(m+1)\}$ is a module of $(\Gamma\bullet\mathbb{H})-\varphi_C(m)$.
\end{enumerate}
\end{lem}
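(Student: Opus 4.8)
The plan is to mirror the proof of Lemma~\ref{lem4_Crticial_hyper}, replacing the role of Remark~\ref{v(C)_odd} by Remark~\ref{v(C)_even} and $C_3(U_{v(C)})$ by $C_3(W_{v(C)+1})$. The starting structural observation is that, by \eqref{E1_Rbis_Critical_hyper} together with \eqref{critical_construction_2}, no edge of $\Gamma\bullet\mathbb{H}$ is contained in $V(C)$, and every $e\in E(\Gamma\bullet\mathbb{H})$ with $e\cap V(C)\neq\emptyset$ has the form $e=\{\varphi_C(2i),\varphi_C(2j+1),\varphi_D(2k)\}$ with $D\in\mathfrak{C}_{{\rm odd}}(\Gamma)$, $CD\in E(\mathbb{H})$, $0\le i\le j\le w(C)-1$ and $0\le k\le w(D)$; in particular $e$ meets $V(C)$ in exactly one vertex of even index and one of odd index, and its third vertex $\varphi_D(2k)$ lies outside $V(C)$. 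If $C$ lies in no edge of $\mathbb{H}$, then no edge of $\Gamma\bullet\mathbb{H}$ meets $V(C)$ and all three assertions hold vacuously, so I would assume from now on that $C$ lies in some edge of $\mathbb{H}$.

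For the first assertion, if $\varphi_C(1)\in e$ then in the above form $2j+1=1$, hence $j=0$, hence $i=0$, so $\varphi_C(0)\in e$. Thus in $(\Gamma\bullet\mathbb{H})-\varphi_C(0)$ no edge contains $\varphi_C(1)$, and $V(\Gamma\bullet\mathbb{H})\setminus\{\varphi_C(0),\varphi_C(1)\}$ is (vacuously) a module of $(\Gamma\bullet\mathbb{H})-\varphi_C(0)$. The second assertion then follows by applying the automorphism $F_C$ of $\Gamma\bullet\mathbb{H}$ from Remark~\ref{Auto_critical_hyper}: since $F_C(\varphi_C(m))=\varphi_C(v(C)-1-m)$ and $v(C)=2w(C)$, $F_C$ sends $\varphi_C(0)\mapsto\varphi_C(2w(C)-1)$ and $\varphi_C(1)\mapsto\varphi_C(2w(C)-2)$, so it carries the module of the first assertion to the module claimed in the second.

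For the third assertion, fix $m\in\{1,\dots,2w(C)-2\}$, so $\varphi_C(m-1)$ and $\varphi_C(m+1)$ have indices of the same parity. By the structural observation no edge contains both of them, so I only need, for each edge $e$ of $(\Gamma\bullet\mathbb{H})-\varphi_C(m)$ containing exactly one of $\varphi_C(m-1),\varphi_C(m+1)$, that exchanging that vertex for the other again produces an edge of $\Gamma\bullet\mathbb{H}$ avoiding $\varphi_C(m)$. Writing $e=\{\varphi_C(2a),\varphi_C(2b+1),\varphi_D(2k)\}$, and noting that $D$ and $k$ are unchanged by the exchange, this reduces to the single inequality $a\le b$. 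When $m=2i+1$ is odd the pair is $\{\varphi_C(2i),\varphi_C(2i+2)\}$; an edge of $(\Gamma\bullet\mathbb{H})-\varphi_C(2i+1)$ meeting it forces $b\ge i+1$ (the value $b=i$ being excluded precisely because it reintroduces $\varphi_C(2i+1)$), and then $a\in\{i,i+1\}\le i+1\le b$ both before and after the swap. When $m=2i$ is even the pair is $\{\varphi_C(2i-1),\varphi_C(2i+1)\}$; an edge meeting $\varphi_C(2i-1)$ has $b=i-1$, hence $a\le i-1$, while an edge of $(\Gamma\bullet\mathbb{H})-\varphi_C(2i)$ meeting $\varphi_C(2i+1)$ has $b=i$ and $a\neq i$, hence again $a\le i-1$; so $a\le i-1$ and $b\in\{i-1,i\}$, and $a\le b$ persists after the swap. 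This yields the third assertion.

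Alternatively, one can first read off the corresponding \emph{local} module statements on $V(C)\cup\{\varphi_D(2k)\}$ from Remark~\ref{v(C)_even}, which identifies $(\Gamma\bullet\mathbb{H})[V(C)\cup\{\varphi_D(2k)\}]$ with the critical hypergraph $C_3(W_{v(C)+1})$ whose primality graph restricted to $V(C)$ is the path $\varphi_C(0)\varphi_C(1)\cdots\varphi_C(2w(C)-1)$, and then invoke Lemma~\ref{Lcritical} to obtain the modules of its vertex-deleted subhypergraphs; the bookkeeping of the previous paragraph is still required to promote these to modules of all of $\Gamma\bullet\mathbb{H}$. In either route I expect this bookkeeping — tracking which edges survive each vertex deletion and checking the index inequality $a\le b$ — to be the only real work, with no genuine obstacle, the point being that the third vertex of each relevant edge lies in some $V(D)$ and is inert under the exchanges.
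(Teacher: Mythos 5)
Your proof is correct and follows essentially the same route as the paper: the same structural description of the edges meeting $V(C)$ gives the first assertion (no edge of $(\Gamma\bullet\mathbb{H})-\varphi_C(0)$ contains $\varphi_C(1)$), the second follows via the automorphism $F_C$ of Remark~\ref{Auto_critical_hyper}, and the third is the same index bookkeeping (your inequality $a\le b$ is the paper's constraint on $j$), split into the same even/odd cases. The only additions are cosmetic, namely the explicit vacuous case where $C$ lies in no edge of $\mathbb{H}$ and the sketched alternative via Remark~\ref{v(C)_even}.
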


\begin{proof}
For the first assertion, consider $e\in E(\Gamma\bullet\mathbb{H})$ such that 
$\varphi_C(1)\in e$. 
By definition of $\Gamma\bullet\mathbb{H}$, there exists 
$D\in\mathfrak{C}_{{\rm odd}}(\Gamma)$ such that 
$e=\varphi_C(0)\varphi_C(1)\varphi_D(2k)$, where $0\leq k\leq w(D)$. 
Consequently, there does not exist $e\in E((\Gamma\bullet\mathbb{H})-\varphi_C(0))$ such that $\varphi_C(1)\in e$. 
It follows that $V(\Gamma\bullet\mathbb{H})\setminus\{\varphi_C(0),\varphi_C(1)\}$ is a module of $(\Gamma\bullet\mathbb{H})-\varphi_C(0)$. 

As in the proof of the second assertion of Lemma~\ref{lem4_Crticial_hyper}, the second assertion is deduced from the first one by using Remark~\ref{Auto_critical_hyper}. 

For the third assertion, suppose that $w(C)\geq 2$. 
Consider $i\in\{1,\ldots,w(C)-1\}$. 
We prove that $\{\varphi_C(2i-1),\varphi_C(2i+1)\}$ is a module of 
$(\Gamma\bullet\mathbb{H})-\varphi_C(2i)$. 
Let $e\in E((\Gamma\bullet\mathbb{H})-\varphi_C(2i))$ such that 
$e\cap\{\varphi_C(2i-1),\varphi_C(2i+1)\}\neq\emptyset$ and 
$e\setminus\{\varphi_C(2i-1),\varphi_C(2i+1)\}\neq\emptyset$. 
By definition of $\Gamma\bullet\mathbb{H}$, there exist 
$D\in\mathfrak{C}_{{\rm odd}}(\Gamma)$ and $k\in\{0,\ldots,w(D)\}$ such that 
\begin{equation*}
\begin{cases}
\text{$e=\varphi_C(2j)\varphi_C(2i-1)\varphi_D(2k)$, where $j\in\{0,\ldots,i-1\}$}\\
\text{or}\\
\text{$e=\varphi_C(2j)\varphi_C(2i+1)\varphi_D(2k)$, where $j\in\{0,\ldots,i\}$}. 
\end{cases}
\end{equation*}
In the second instance, we have $j\in\{0,\ldots,i-1\}$ because $\varphi_C(2i)\not\in e$. 
By definition of $\Gamma\bullet\mathbb{H}$, we have 
$\varphi_C(2j)\varphi_C(2i-1)\varphi_D(2k),\varphi_C(2j)\varphi_C(2i+1)\varphi_D(2k)\in 
E((\Gamma\bullet\mathbb{H})-\varphi_C(2i))$. 
Thus, $\{\varphi_C(2i-1),\varphi_C(2i+1)\}$ is a module of 
$(\Gamma\bullet\mathbb{H})-\varphi_C(2i)$. 
Lastly, consider $i\in\{0,\ldots,w(C)-2\}$. 
We prove that $\{\varphi_C(2i),\varphi_C(2i+2)\}$ is a module of 
$(\Gamma\bullet\mathbb{H})-\varphi_C(2i+1)$. 
Let $e\in E((\Gamma\bullet\mathbb{H})-\varphi_C(2i+1))$ such that 
$e\cap\{\varphi_C(2i),\varphi_C(2i+2)\}\neq\emptyset$ and 
$e\setminus\{\varphi_C(2i),\varphi_C(2i+2)\}\neq\emptyset$. 
By definition of $\Gamma\bullet\mathbb{H}$, there exist 
$D\in\mathfrak{C}_{{\rm odd}}(\Gamma)$ and $k\in\{0,\ldots,w(D)\}$ such that 
\begin{equation*}
\begin{cases}
\text{$e=\varphi_C(2i)\varphi_C(2j+1)\varphi_D(2k)$, where $j\in\{i,\ldots,w(C)-1\}$}\\
\text{or}\\
\text{$e=\varphi_C(2i+2)\varphi_C(2j+1)\varphi_D(2k)$, where $j\in\{i+1,\ldots,w(C)-1\}$}. 
\end{cases}
\end{equation*}
In the first instance, we have $j\in\{i+1,\ldots,w(C)-1\}$ because $\varphi_C(2i+1)\not\in e$. 
By definition of $\Gamma\bullet\mathbb{H}$, we have 
$\varphi_C(2i)\varphi_C(2j+1)\varphi_D(2k),\varphi_C(2i+2)\varphi_C(2j+1)\varphi_D(2k)\in 
E((\Gamma\bullet\mathbb{H})-\varphi_C(2i+1))$. 
Thus, $\{\varphi_C(2i),\varphi_C(2i+2)\}$ is a module of 
$(\Gamma\bullet\mathbb{H})-\varphi_C(2i+1)$. 
\end{proof}

The next result is an immediate consequence of Lemmas~\ref{lem4_Crticial_hyper} 
and \ref{lem5_Crticial_hyper}. 

\begin{cor}\label{cor2_Crticial_hyper}
Let $C\in\mathfrak{C}(\Gamma)\setminus\mathfrak{C}_{1}(\Gamma)$. 
For every $c\in V(C)$, we have $(\Gamma\bullet\mathbb{H})-c$ is decomposable. 
Moreover, we have 
\begin{enumerate}
\item $N_{\mathscr{P}(\Gamma\bullet\mathbb{H})}(\varphi_C(0))\subseteq\{\varphi_C(1)\}$;
\item $N_{\mathscr{P}(\Gamma\bullet\mathbb{H})}(\varphi_C(v(C)-1))\subseteq\{\varphi_C(v(C)-2)\}$;
\item if $v(C)\geq 3$, then for every $m\in\{1,\ldots,v(C)-2\}$, we have 
$N_{\mathscr{P}(\Gamma\bullet\mathbb{H})}(\varphi_C(m))\subseteq\{\varphi_C(m-1),\varphi_C(m+1)\}$. 
\end{enumerate}
\end{cor}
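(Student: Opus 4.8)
The plan is to deduce everything mechanically from Lemmas~\ref{lem4_Crticial_hyper} and \ref{lem5_Crticial_hyper}, which between them exhibit, for every vertex of every component $C\in\mathfrak{C}(\Gamma)\setminus\mathfrak{C}_{1}(\Gamma)$, an explicit module of the corresponding one-vertex-deleted subhypergraph. First I would fix such a $C$ and $c\in V(C)$, write $c=\varphi_C(m)$ with $0\le m\le v(C)-1$, and split into the case $C\in\mathfrak{C}_{{\rm odd}}(\Gamma)$ (where $v(C)=2w(C)+1$, $w(C)\ge 1$, and Lemma~\ref{lem4_Crticial_hyper} applies) and the case $C\in\mathfrak{C}_{{\rm even}}(\Gamma)$ (where $v(C)=2w(C)$ and Lemma~\ref{lem5_Crticial_hyper} applies). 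In either case one of the three assertions of the relevant lemma produces a module $M$ of $(\Gamma\bullet\mathbb{H})-c$: the ``large'' module $V(\Gamma\bullet\mathbb{H})\setminus\{\varphi_C(0),\varphi_C(1)\}$ when $m=0$, its mirror $V(\Gamma\bullet\mathbb{H})\setminus\{\varphi_C(v(C)-2),\varphi_C(v(C)-1)\}$ when $m=v(C)-1$, and the ``small'' module $\{\varphi_C(m-1),\varphi_C(m+1)\}$ when $1\le m\le v(C)-2$; the index ranges match up exactly (for $C$ even, Lemma~\ref{lem5_Crticial_hyper}(3) requires $w(C)\ge 2$, which is precisely $v(C)\ge 3$, and the leftover case $v(C)=2$ has only $m\in\{0,1\}$, handled by (1) and (2)). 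Since in every case $2\le|M|\le v(\Gamma\bullet\mathbb{H})-2<v((\Gamma\bullet\mathbb{H})-c)$ (recall $v(\Gamma\bullet\mathbb{H})\ge 5$), the module $M$ is nontrivial, so $(\Gamma\bullet\mathbb{H})-c$ is decomposable.

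For the three bounds on the primality graph I would combine the same modules with the elementary observation that the trace of a module of a hypergraph on any induced subhypergraph is a module of that subhypergraph. So suppose $w\ne\varphi_C(m)$ satisfies $w\in N_{\mathscr{P}(\Gamma\bullet\mathbb{H})}(\varphi_C(m))$, i.e.\ $(\Gamma\bullet\mathbb{H})-\{\varphi_C(m),w\}$ is prime, and assume toward a contradiction that $w$ avoids the relevant forbidden set. If $m=0$ and $w\ne\varphi_C(1)$, then $w$ belongs to the large module $M$ of $(\Gamma\bullet\mathbb{H})-\varphi_C(0)$, whence $M\setminus\{w\}=V(\Gamma\bullet\mathbb{H})\setminus\{\varphi_C(0),\varphi_C(1),w\}$ is a module of $(\Gamma\bullet\mathbb{H})-\{\varphi_C(0),w\}$; it is nontrivial since it omits $\varphi_C(1)$ and (by $v(\Gamma\bullet\mathbb{H})\ge 5$) has at least two elements, contradicting primality, so $w=\varphi_C(1)$ and Assertion~(1) holds. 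Assertion~(2) follows symmetrically from the mirror module, or more cheaply by pushing Assertion~(1) through the automorphism $F_C$ of $\Gamma\bullet\mathbb{H}$ of Remark~\ref{Auto_critical_hyper}, which interchanges $\varphi_C(0)\leftrightarrow\varphi_C(v(C)-1)$ and $\varphi_C(1)\leftrightarrow\varphi_C(v(C)-2)$ and hence induces an automorphism of $\mathscr{P}(\Gamma\bullet\mathbb{H})$. For Assertion~(3), when $1\le m\le v(C)-2$ and $w\notin\{\varphi_C(m-1),\varphi_C(m+1)\}$ the $2$-element module $\{\varphi_C(m-1),\varphi_C(m+1)\}$ of $(\Gamma\bullet\mathbb{H})-\varphi_C(m)$ persists verbatim as a module of $(\Gamma\bullet\mathbb{H})-\{\varphi_C(m),w\}$ and is nontrivial because $v(\Gamma\bullet\mathbb{H})-2>2$, again a contradiction.

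I expect no real obstacle: the entire content sits in Lemmas~\ref{lem4_Crticial_hyper} and \ref{lem5_Crticial_hyper}, and each remaining step is merely ``a nontrivial module survives the deletion of one further vertex''. The only thing to be careful about is the combinatorial bookkeeping — matching the index ranges $\{1,\dots,2w(C)-1\}$ or $\{1,\dots,2w(C)-2\}$ with $\{1,\dots,v(C)-2\}$, checking that the endpoint and interior sub-cases together account for every $c\in V(C)$ exactly once, and confirming in each sub-case that the module produced is neither a singleton nor the whole vertex set of the deleted subhypergraph.
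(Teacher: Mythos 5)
Your argument is correct and is exactly the route the paper intends: the paper gives no written proof, calling the corollary an immediate consequence of Lemmas~\ref{lem4_Crticial_hyper} and \ref{lem5_Crticial_hyper}, and your proposal just spells this out (the exhibited modules give decomposability of $(\Gamma\bullet\mathbb{H})-c$, and their traces survive the deletion of any further vertex outside the stated sets, contradicting primality of $(\Gamma\bullet\mathbb{H})-\{c,w\}$). The index bookkeeping and the use of $F_C$ from Remark~\ref{Auto_critical_hyper} for the mirror case match the paper's conventions.
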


\begin{proof}[Proof of Theorem~\ref{thm_2_main}]
To begin, suppose that $\Gamma\bullet\mathbb{H}$ is critical. 
In particular, $\Gamma\bullet\mathbb{H}$ is prime. 
By Proposition~\ref{Prop1_Critical_hyper}, Assertions (C1) and (C2) hold. 
For Assertion (C3), consider $v\in V_1(\Gamma)$. 
Since $\Gamma\bullet\mathbb{H}$ is critical, $(\Gamma\bullet\mathbb{H})-v$ is decomposable. 
Furthermore, since $v\in V_1(\Gamma)$, we have $\{v\}\in\mathfrak{C}_{1}(\Gamma)$. 
It follows that $(\Gamma\bullet\mathbb{H})-v=(\Gamma-v)\bullet(\mathbb{H}-\{v\})$. 
By Proposition~\ref{Prop1_Critical_hyper} applied to $(\Gamma\bullet\mathbb{H})-v$, $\mathbb{H}-\{v\}$ is disconnected or 
$\mathbb{H}-\{v\}$ is connected, and there exists a nontrivial module 
$\mathbb{M}_{\{v\}}$ of $\mathbb{H}-\{v\}$ such that 
$\mathbb{M}_{\{v\}}\subseteq\mathfrak{C}_1(\Gamma-v)$. 
Since $\mathfrak{C}_1(\Gamma-v)=\mathfrak{C}_1(\Gamma)\setminus\{\{v\}\}$, 
$\mathbb{M}_{\{v\}}\subseteq\mathfrak{C}_1(\Gamma)\setminus\{\{v\}\}$. 

Conversely, suppose that Assertions (C1), (C2) and (C3) hold. 
Since Assertions (C1) and (C2) hold, it follows from Proposition~\ref{Prop1_Critical_hyper}, 
$\Gamma\bullet\mathbb{H}$ is prime. 
Furthermore, it follows from Assertion (C3) and Proposition~\ref{Prop1_Critical_hyper} that 
$(\Gamma\bullet\mathbb{H})-v$ is decomposable for each 
$v\in V_1(\Gamma)$. 
Lastly, consider $v\in V(\Gamma)\setminus V_1(\Gamma)$. 
There exists $C\in\mathfrak{C}(\Gamma)\setminus\mathfrak{C}_{1}(\Gamma)$ such that $v\in V(C)$. 
By Corollary~\ref{cor2_Crticial_hyper}, $(\Gamma\bullet\mathbb{H})-v$ is decomposable. 
Consequently, $\Gamma\bullet\mathbb{H}$ is critical. 
\end{proof}

\section{An improvement of Theorem~\ref{thm_2_main}}\label{sec_improvement}

The purpose of this section is to demonstrate the following result. 

\begin{thm}\label{thm_3_main}
Suppose that $v(\Gamma\bullet\mathbb{H})\geq 5$. 
The 3-hypergraph $\Gamma\bullet\mathbb{H}$ is critical and 
$\mathscr{P}(\Gamma\bullet\mathbb{H})=\Gamma$ 
if and only if the following six assertions hold
\begin{enumerate}
\item[(C1)] $\mathbb{H}$ is connected;
\item[(C2)] for every nontrivial module $\mathbb{M}$ of $\mathbb{H}$, we have 
$\mathbb{M}\setminus\mathfrak{C}_{1}(\Gamma)\neq\emptyset$;
\item[(C3)] for each $v\in V_1(\Gamma)$, if 
$\mathbb{H}-\{v\}$ is connected, then $\mathbb{H}-\{v\}$ admits a nontrivial module $\mathbb{M}_{\{v\}}$ such that $\mathbb{M}_{\{v\}}\subseteq\mathfrak{C}_1(\Gamma)\setminus\{\{v\}\}$;
\item[(C4)] for each $C\in\mathfrak{C}_{{\rm even}}(\Gamma)$ such that $v(C)=2$, we have 
\begin{itemize}
\item $\mathbb{H}-C$ is connected,
\item for every nontrivial module $\mathbb{M}_C$ of $\mathbb{H}-C$, 
we have $\mathbb{M}_C\setminus\mathfrak{C}_{1}(\Gamma)\neq\emptyset$;
\end{itemize}
\item[(C5)] for each $C\in\mathfrak{C}_{{\rm odd}}(\Gamma)$ such that $v(C)=3$, we have 
\begin{itemize}
\item $\mathbb{H}_C$ is connected, where $\mathbb{H}_C$ is obtained from $\mathbb{H}$ by replacing $C$ by $\{\varphi_C(0)\}$, 
\item for every nontrivial module $\mathbb{M}_C$ of $\mathbb{H}_C$, 
we have $\mathbb{M}_C\setminus(\mathfrak{C}_{1}(\Gamma)\cup\{\varphi_C(0)\})\neq\emptyset$; 
\end{itemize}
\item[(C6)] for each $C\in\mathfrak{C}_1(\Gamma)$, if there exists $D\in\mathfrak{C}_1(\Gamma)\setminus\{C\}$ such that $D$ is an isolated vertex of $\mathbb{H}-C$ and 
$\mathbb{H}\setminus\{C,D\}$ is connected, then $\mathbb{H}\setminus\{C,D\}$ admits a nontrivial module $\mathbb{M}$ such that 
$\mathbb{M}\subseteq\mathfrak{C}_1(\Gamma)\setminus\{C,D\}$. 
\end{enumerate}
\end{thm}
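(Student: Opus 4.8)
The plan is to prove Theorem~\ref{thm_3_main} by combining the already-established criticality criterion of Theorem~\ref{thm_2_main} with a careful analysis of when the primality graph $\mathscr{P}(\Gamma\bullet\mathbb{H})$ exactly coincides with $\Gamma$. Since Theorem~\ref{thm_2_main} tells us that $\Gamma\bullet\mathbb{H}$ is critical if and only if (C1), (C2), (C3) hold, it suffices to show that, \emph{under the assumption that $\Gamma\bullet\mathbb{H}$ is critical}, the equality $\mathscr{P}(\Gamma\bullet\mathbb{H})=\Gamma$ is equivalent to (C4), (C5), (C6). Thus the whole theorem reduces to the following: given (C1)--(C3), we have $\mathscr{P}(\Gamma\bullet\mathbb{H})=\Gamma$ iff (C4)--(C6) hold.

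The containment $E(\Gamma)\supseteq E(\mathscr{P}(\Gamma\bullet\mathbb{H}))$ direction is partly handled by Corollary~\ref{cor2_Crticial_hyper}: any edge of $\mathscr{P}(\Gamma\bullet\mathbb{H})$ incident to a vertex of a nontrivial path-component of $\Gamma$ must be an edge of that path. So the remaining work is (a) to decide, for each edge $vw\in E(\Gamma)$, whether $(\Gamma\bullet\mathbb{H})-\{v,w\}$ is prime — this gives the inclusion $E(\Gamma)\subseteq E(\mathscr{P}(\Gamma\bullet\mathbb{H}))$; and (b) to decide whether there are any \emph{extra} edges $vw\in E(\mathscr{P}(\Gamma\bullet\mathbb{H}))\setminus E(\Gamma)$, which by Corollary~\ref{cor2_Crticial_hyper} can only occur with $v,w\in V_1(\Gamma)$, or with one endpoint an endpoint of a $2$-vertex even component, or an endpoint of a $3$-vertex odd component. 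The three conditions (C4), (C5), (C6) are precisely engineered to rule out such spurious edges in the three relevant cases. I would treat each case by writing $(\Gamma\bullet\mathbb{H})-\{v,w\}$ as a hypergraph of the same form $\Gamma'\bullet\mathbb{H}'$ for a suitable modified graph $\Gamma'$ (delete the two vertices; a $2$-vertex even component disappears, a $3$-vertex odd component $C$ becomes the singleton $\{\varphi_C(0)\}$ after removing $\varphi_C(1),\varphi_C(2)$, etc.) and a suitable $\{2,3\}$-hypergraph $\mathbb{H}'$ on $\mathfrak{C}(\Gamma')$, then apply Proposition~\ref{Prop1_Critical_hyper} to $\Gamma'\bullet\mathbb{H}'$ to decide primality. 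This is where (C4) reads off as ``(C1)$+$(C2) for $\mathbb{H}-C$'', (C5) as ``(C1)$+$(C2) for $\mathbb{H}_C$'', and (C6) is the analogue of (C3) for pairs of singletons.

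For the inclusion $E(\Gamma)\subseteq E(\mathscr{P}(\Gamma\bullet\mathbb{H}))$, I would argue that for any edge $vw\in E(\Gamma)$ the removal of $\{v,w\}$ again produces a hypergraph of the form $\Gamma'\bullet\mathbb{H}'$ whose data $\Gamma',\mathbb{H}'$ inherit connectivity and the module condition from (C1), (C2) (and in the boundary cases from (C4), (C5)); then Proposition~\ref{Prop1_Critical_hyper} gives primality, so $vw\in E(\mathscr{P}(\Gamma\bullet\mathbb{H}))$. The verification that $\Gamma\bullet\mathbb{H}$ minus two appropriately chosen vertices really has the syntactic shape $\Gamma'\bullet\mathbb{H}'$ — i.e.\ that the residual edge sets $E_C$, $E_\varepsilon$ restrict correctly and that conditions \eqref{critical_construction_1} and \eqref{critical_construction_2} persist for $(\Gamma',\mathbb{H}')$ — is the fiddly bookkeeping at the heart of the argument.

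The main obstacle I anticipate is exactly this structural persistence: showing that deleting two vertices from the interior or ends of path-components, or from singletons, yields a hypergraph that is \emph{literally} of the form $\Gamma'\bullet\mathbb{H}'$, handling all the degenerate subcases (a path of length $3$ or $4$ shrinking, an even component of size $2$ vanishing, a singleton disappearing and possibly disconnecting $\mathbb{H}$, the interaction with \eqref{critical_construction_1}'s starred conditions when $|V(H)\setminus V(C)|$ is small). Once that reduction is in place, the equivalences (C4)$\Leftrightarrow$no bad edge at a $2$-even endpoint, (C5)$\Leftrightarrow$no bad edge at a $3$-odd endpoint, (C6)$\Leftrightarrow$no bad edge between two singletons, all fall out mechanically from Proposition~\ref{Prop1_Critical_hyper} and Corollary~\ref{cor2_Crticial_hyper}, and combining with Theorem~\ref{thm_2_main} finishes the proof.
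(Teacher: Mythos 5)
Your reduction (criticality via Theorem~\ref{thm_2_main}, then show that under criticality $\mathscr{P}(\Gamma\bullet\mathbb{H})=\Gamma$ is equivalent to (C4)--(C6)) matches the paper, and your treatment of the $\Gamma$-edges inside components is essentially the paper's: identify $(\Gamma\bullet\mathbb{H})-\{v,w\}$ with a new product $\Gamma^-\bullet\mathbb{H}^-$ and apply Proposition~\ref{Prop1_Critical_hyper} (this is Proposition~\ref{Prop2_Critical_hyper} for $v(C)\geq 4$, and Lemmas~\ref{lem_6_Critical_hyper}, \ref{lem_7_Critical_hyper} for $v(C)=2,3$, where (C4) and (C5) enter). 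Note, though, that by Corollary~\ref{cor2_Crticial_hyper} no spurious $\mathscr{P}$-edge can touch a non-singleton component at all, so (C4) and (C5) are not there to ``rule out spurious edges at endpoints of short components''; they are needed for the inclusion $E(\Gamma)\subseteq E(\mathscr{P}(\Gamma\bullet\mathbb{H}))$, as you correctly say later.

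The genuine gap is in your handling of potential $\mathscr{P}$-edges between two singleton components, which you claim ``falls out mechanically'' from (C6) via Proposition~\ref{Prop1_Critical_hyper}. It does not: if $v,w\in V_1(\Gamma)$ and $(\Gamma\bullet\mathbb{H})-\{v,w\}$ is prime, Proposition~\ref{Prop1_Critical_hyper} only tells you that $\mathbb{H}-\{\{v\},\{w\}\}$ is connected with no nontrivial module inside $\mathfrak{C}_1(\Gamma)\setminus\{\{v\},\{w\}\}$; to contradict (C6) you also need its hypothesis that $\{w\}$ is an \emph{isolated vertex of} $\mathbb{H}-\{v\}$, and this is not a consequence of primality of the removal. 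The paper derives it only in the case $d_{\mathscr{P}(\Gamma\bullet\mathbb{H})}(v)=1$, using Lemma~\ref{Lcritical} (the set $V(\Gamma\bullet\mathbb{H})\setminus\{v,w\}$ is then a module of $(\Gamma\bullet\mathbb{H})-v$, which forces every edge of $\mathbb{H}$ through $\{w\}$ to contain $\{v\}$); this is the content of Lemma~\ref{lem_8_Critical_hyper}. The remaining possibility, that every singleton with positive degree has degree $2$, is not touched by (C6) at all and must be excluded by a separate global argument: such singletons would force a component of $\mathscr{P}(\Gamma\bullet\mathbb{H})$ that is a cycle on a proper subset of the vertex set, which contradicts Proposition~\ref{Pcritical_T2n+1} (a cycle in the primality graph of a critical 3-hypergraph forces the circular case on the whole vertex set), using that $\mathfrak{C}(\Gamma)\setminus\mathfrak{C}_1(\Gamma)\neq\emptyset$ by \eqref{critical_construction_1}. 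Without the degree analysis of Lemma~\ref{Lcritical} and this cycle/circularity argument, your plan cannot rule out spurious singleton--singleton edges, so the converse implication (C1)--(C6) $\Rightarrow\mathscr{P}(\Gamma\bullet\mathbb{H})=\Gamma$ is not established.
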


We use the next four results to prove Theorem~\ref{thm_3_main}. 

\begin{prop}\label{Prop2_Critical_hyper}
Suppose that $v(\Gamma\bullet\mathbb{H})\geq 5$. 
If the 3-hypergraph $\Gamma\bullet\mathbb{H}$ is critical, then 
for each $C\in\mathfrak{C}(\Gamma)$ such that $v(C)\geq 4$, we have 
\begin{equation}\label{E1bis_thm1bis_Critical_hyper}
\begin{cases}
\mathscr{P}(\Gamma\bullet\mathbb{H})[V(C)]=C,\\
\text{and}\\
\text{for each $c\in V(C)$, 
$N_{\mathscr{P}(\Gamma\bullet\mathbb{H})}(c)\subseteq V(C).$}
\end{cases}
\end{equation} 
\end{prop}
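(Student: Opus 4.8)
The plan is to establish \eqref{E1bis_thm1bis_Critical_hyper} in two halves: first that every primality-edge incident to a vertex of $V(C)$ stays inside $V(C)$ (the second line), and then that the primality graph restricted to $V(C)$ is exactly the path $C$ (the first line). The second line is essentially already in hand: Corollary~\ref{cor2_Crticial_hyper} gives, for each $c\in V(C)$, that $N_{\mathscr{P}(\Gamma\bullet\mathbb{H})}(c)$ is contained in the set of path-neighbours of $c$ inside $V(C)$ (one or two of the vertices $\varphi_C(m\pm1)$), which are all in $V(C)$; so the inclusion $N_{\mathscr{P}(\Gamma\bullet\mathbb{H})}(c)\subseteq V(C)$ is immediate. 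What remains is to upgrade these inclusions to equalities, i.e. to show that for $v(C)\geq 4$ each of the candidate path-edges $\varphi_C(m)\varphi_C(m+1)$ of $C$ actually lies in $\mathscr{P}(\Gamma\bullet\mathbb{H})$, which means $(\Gamma\bullet\mathbb{H})-\{\varphi_C(m),\varphi_C(m+1)\}$ is prime.

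The key step is the primality computation. Fix $m$ with $0\leq m\leq v(C)-2$ and set $K=\{\varphi_C(m),\varphi_C(m+1)\}$; I want to prove $(\Gamma\bullet\mathbb{H})-K$ is prime. First note $(\Gamma\bullet\mathbb{H})-K$ has the same shape as a $\Gamma'\bullet\mathbb{H}$ construction: removing two consecutive vertices of the path $C$ either shortens $C$ to two shorter paths (or a shorter path plus nothing) — in any case replaces the component $C$ of $\Gamma$ by the components of $\Gamma-K$, all of which are still paths — and one checks that $(\Gamma\bullet\mathbb{H})-K = \Gamma'\bullet\mathbb{H}'$ where $\Gamma'=\Gamma-K$ and $\mathbb{H}'$ is obtained from $\mathbb{H}$ by replacing $C$ by the (at most two) new sub-path components and keeping the same hyperedges (one has to verify the edge-sets match, using that $C_3(U_{2n+1})$ and $C_3(W_{2n+1})$ restrict correctly to consecutive-vertex-deleted intervals — this is where the critical/primality-graph structure of $U$ and $W$ from Propositions~\ref{prop_U2n+1} and \ref{prop_W2n+1} is used, together with $v(C)\geq 4$ so that after deleting two vertices the residue is nonempty). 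Then I apply Proposition~\ref{Prop1_Critical_hyper} to $\Gamma'\bullet\mathbb{H}'$: it is prime iff (C1) $\mathbb{H}'$ is connected and (C2) every nontrivial module of $\mathbb{H}'$ meets $\mathfrak{C}(\Gamma')\setminus\mathfrak{C}_1(\Gamma')$. Since $\mathbb{H}$ satisfies (C1) and (C2) for $\Gamma\bullet\mathbb{H}$ (as $\Gamma\bullet\mathbb{H}$ is critical, hence prime), and since subdividing a single non-singleton component of $\Gamma$ into non-singleton sub-paths changes neither the connectivity of the associated $\{2,3\}$-hypergraph nor the property that nontrivial modules avoid being contained in the singletons, (C1) and (C2) transfer to $\mathbb{H}'$; hence $(\Gamma\bullet\mathbb{H})-K$ is prime, so $\varphi_C(m)\varphi_C(m+1)\in E(\mathscr{P}(\Gamma\bullet\mathbb{H}))$.

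Combining: the path-edges of $C$ are all primality-edges (just shown) and, by Corollary~\ref{cor2_Crticial_hyper}, no other edge of $\mathscr{P}(\Gamma\bullet\mathbb{H})$ inside $V(C)$ exists; hence $\mathscr{P}(\Gamma\bullet\mathbb{H})[V(C)]=C$. Moreover Corollary~\ref{cor2_Crticial_hyper} also gives $N_{\mathscr{P}(\Gamma\bullet\mathbb{H})}(c)\subseteq V(C)$ for every $c\in V(C)$, which is the second line of \eqref{E1bis_thm1bis_Critical_hyper}. This finishes the proof.

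\medskip

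\noindent\textbf{Main obstacle.} The delicate point is the identification $(\Gamma\bullet\mathbb{H})-\{\varphi_C(m),\varphi_C(m+1)\}=\Gamma'\bullet\mathbb{H}'$ at the level of edge sets: one must check that deleting two consecutive vertices from the path $C$ carries $C_3(U_{v(C)})$ (resp. the $W$-type structure attached to an even component via a hyperedge of size $2$, resp. the triple-product structure $E_\varepsilon$ for $|\varepsilon|=3$) onto the corresponding $C_3(U)$-, $W$-, and product structures on the sub-paths — including the book-keeping of which endpoint-parities the sub-paths inherit, and the case $v(C)=4$ or $5$ where one of the sub-paths may be a single vertex or empty and the relevant constraints in \eqref{critical_construction_1}, \eqref{critical_construction_2} must be re-verified. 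The parity bookkeeping (even vs. odd index of $\varphi_C$, and $w(\cdot)$ of the pieces) is the part most prone to error; everything else is routine once that reduction is in place.
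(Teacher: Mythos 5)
Your overall plan is the paper's: reduce the first line of \eqref{E1bis_thm1bis_Critical_hyper} to showing that for every consecutive pair $K=\{\varphi_C(m),\varphi_C(m+1)\}$ the hypergraph $(\Gamma\bullet\mathbb{H})-K$ is prime, by recognizing it as a $\bullet$-construction and invoking Proposition~\ref{Prop1_Critical_hyper}, then cap the degrees with Corollary~\ref{cor2_Crticial_hyper} (equivalently Lemma~\ref{Lcritical}). However, the way you carry out the key reduction is wrong, and it is exactly the point you flagged as delicate. You set $\Gamma'=\Gamma-K$, so that for an \emph{interior} pair the component $C$ is split into two sub-paths $C_1,C_2$, and you claim $(\Gamma\bullet\mathbb{H})-K=\Gamma'\bullet\mathbb{H}'$ with $\mathbb{H}'$ ``keeping the same hyperedges.'' This identification fails. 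First, $E_C$ contains triples with vertices on both sides of the deleted pair (e.g.\ $\varphi_C(2a)\varphi_C(2b+1)\varphi_C(2c)$ with $2a<m$ and $2c>m+1$), and these survive in $(\Gamma\bullet\mathbb{H})-K$; in a construction over $\Gamma-K$ such edges could only come from a hyperedge of $\mathbb{H}'$ containing both $C_1$ and $C_2$, but the pieces of a split path have orders of unequal parity when $v(C)$ is odd, so a size-$3$ hyperedge through both violates \eqref{critical_construction_2}, and the size-$2$ pattern $E_\varepsilon$ does not reproduce the $C_3(U)$-type triples. Second, the local indexing of $C_2$ starts at what was an odd position of $C$, so the parity of positions is flipped there, and the cross-edges from $C_2$ to other components of $\Gamma$ no longer match the construction. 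Third, $\mathbb{H}'$ is not even well defined (a hyperedge through $C$ must be reassigned to $C_1$, $C_2$, or both), and for $v(C)=4$ with the middle pair removed both pieces are singletons, contradicting your ``non-singleton sub-paths'' and destroying the claimed automatic transfer of (C2).

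The correct reduction, which is what the paper does, is not to split $C$ but to \emph{reconnect} it: replace $C$ by the single shorter path $C^-$ on $V(C)\setminus K$ obtained by adding the edge $\varphi_C(m-1)\varphi_C(m+2)$, and observe that the increasing, parity-preserving bijection $\psi$ from $\{0,\dots,v(C)-3\}$ onto the surviving positions carries $C_3(U_{v(C)-2})$ (resp.\ the $W$-type and product structures) onto the restriction of the old structure; hence $(\Gamma\bullet\mathbb{H})-K=\Gamma^-\bullet\mathbb{H}^-$, where $\mathbb{H}^-$ is just $\mathbb{H}$ with the vertex $C$ renamed $C^-$. Then (C1) and (C2) transfer verbatim (since $\mathbb{H}^-\cong\mathbb{H}$ and $\mathfrak{C}_1$ is unchanged), Proposition~\ref{Prop1_Critical_hyper} gives primality, and the rest of your argument (endpoint pairs, for which $\Gamma-K$ does work since no split occurs, and the degree bound from Corollary~\ref{cor2_Crticial_hyper}) goes through. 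As written, though, the central step of your proof does not.
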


\begin{proof}
By Corollary~\ref{cor2_Crticial_hyper}, for each $C\in\mathfrak{C}(\Gamma)\setminus\mathfrak{C}_{1}(\Gamma)$ and for every $c\in V(C)$, we have 
$N_{\mathscr{P}(\Gamma\bullet\mathbb{H})}(c)\subseteq V(C)$. 
Now, consider $C\in\mathfrak{C}(\Gamma)$ such that $v(C)\geq 4$. 
For instance, suppose that $C\in\mathfrak{C}_{{\rm odd}}(\Gamma)$. 
We have $w(C)\geq 2$. 
Consider $i\in\{1,\ldots,w(C)-1\}$. 
We verify that 
\begin{equation}\label{E2_thm1_Critical_hyper}
N_{\mathscr{P}(\Gamma\bullet\mathbb{H})}(\varphi_C(2i))=
\{\varphi_C(2i-1),\varphi_C(2i+1)\}. 
\end{equation}
We show that $(\Gamma\bullet\mathbb{H})-\{\varphi_C(2i-1),\varphi_C(2i)\}$ is prime. 
To define $\Gamma\bullet\mathbb{H}$, we consider an isomorphism from $P_{2w(C)+1}$ onto $C$. 
It follows from the definition of $\Gamma\bullet\mathbb{H}$ that $\varphi_C$ is an isomorphism from 
$C_3(U_{2w(C)+1})$ onto $(\Gamma\bullet\mathbb{H})[V(C)]$. 
The bijection 
\begin{equation*}
\begin{array}{rccl}
\psi:&\{0,\ldots,2w(C)-2\}&\longrightarrow&\{0,\ldots,2w(C)\}\setminus\{2i-1,2i\}\\
&0\leq j\leq 2i-2&\longmapsto&j\\
&2i-1\leq j\leq 2w(C)-1&\longmapsto&j+2,
\end{array}
\end{equation*}
is an isomorphism from $C_3(U_{2w(C)-1})$ onto $C_3(U_{2w(C)+1})-\{2i-1,2i\}$. 
Therefore, $((\varphi_C)_{\restriction\{0,\ldots,2w(C)+1\}\setminus\{i-1,i\}})\circ\psi$ is an isomorphism from 
$C_3(U_{2w(C)-1})$ onto $(\Gamma\bullet\mathbb{H})[V(C)\setminus\{\varphi_C(2i-1),\varphi_C(2i)\}]$. 
Observe that $\psi$ is also an isomorphism from $P_{2w(C)-1}$ onto the path $C^-$ defined on 
$V(C^-)=\{0,\ldots,2w(C)+1\}\setminus\{i-1,i\}$ by 
$E(C^-)=(E(C)\setminus\{\varphi_C(2i-2)\varphi_C(2i-1),\varphi_C(2i-1)\varphi_C(2i),
\varphi_C(2i)\varphi_C(2i+1)\})\cup\{\varphi_C(2i-2)\varphi_C(2i+1)\}$. 
Now, consider the graph $\Gamma^-$ obtained from $\Gamma$ by replacing the path $C$ by $C^-$. 
Moreover, consider the hypergraph $\mathbb{H}^-$ obtained from 
$\mathbb{H}$ by renaming the vertex $C$ by $C^-$. 
Hence, the bijection 
\begin{equation*}
\begin{array}{rrcl}
\theta:&V(\mathbb{H})&\longrightarrow&(V(\mathbb{H})\setminus\{C\})\cup\{C^-\}\\
&D\neq C&\longmapsto&D\\
&C&\longmapsto&C^-,
\end{array}
\end{equation*}
is an isomorphism from $\mathbb{H}$ onto $\mathbb{H}^-$. 
Since $\psi$ is increasing and preserves the parity, we obtain 
$(\Gamma\bullet\mathbb{H})-\{\varphi_C(2i-1),\varphi_C(2i)\}=
\Gamma^-\bullet\mathbb{H}^-$. 
Since $\mathbb{H}$ is connected, $\mathbb{H}^-$ is as well. 
Furthermore, consider any nontrivial module $\mathbb{M}^-$ of 
$\mathbb{H}^-$. 
We obtain that $\theta^{-1}(\mathbb{M}^-)$ is a nontrivial module of $\mathbb{H}$. 
Since $\Gamma\bullet\mathbb{H}$ is critical, it is prime. 
By Assertion~(C2) of Proposition~\ref{Prop1_Critical_hyper}, 
$\theta^{-1}(\mathbb{M}^-)\setminus\mathfrak{C}_{1}(\Gamma)\neq\emptyset$. 
Therefore, we obtain 
$(\mathbb{M}^-)\setminus\theta(\mathfrak{C}_{1}(\Gamma))\neq\emptyset$. 
Clearly, we have $\theta(\mathfrak{C}_{1}(\Gamma))=
\mathfrak{C}_{1}(\Gamma)=\mathfrak{C}_{1}(\Gamma^-)$. 
It follows that 
$(\mathbb{M}^-)\setminus\mathfrak{C}_{1}(\Gamma^-)\neq\emptyset$. 
By Proposition~\ref{Prop1_Critical_hyper}, 
$\Gamma^-\bullet\mathbb{H}^-$, that is, 
$(\Gamma\bullet\mathbb{H})-\{\varphi_C(2i-1),\varphi_C(2i)\}$ is prime. 
Consequently, 
$\varphi_C(2i-1)\in N_{\mathscr{P}(\Gamma\bullet\mathbb{H})}(\varphi_C(2i))$.  
Similarly, $\varphi_C(2i+1)\in N_{\mathscr{P}(\Gamma\bullet\mathbb{H})}(\varphi_C(2i))$. 
Therefore, \eqref{E2_thm1_Critical_hyper} follows from Lemma~\ref{Lcritical}. 

Now, we verify that 
\begin{equation}\label{E3_thm1_Critical_hyper}
N_{\mathscr{P}(H_{(\Gamma,\mathbb{H})})}(\varphi_C(0))=
\{\varphi_C(1)\}. 
\end{equation}
As previously, we consider the graph $C^-$ defined on $V(C)\setminus\{\varphi_C(0),\varphi_C(1)\}$ by 
$E(C^-)=E(C)\setminus\{\varphi_C(0)\varphi_C(1),\varphi_C(1)\varphi_C(2)\}$. 
Now, consider the graph $\Gamma^-$ obtained from $\Gamma$ by replacing the path $C$ by $C^-$. 
Moreover, consider the hypergraph $\mathbb{H}^-$ obtained from 
$\mathbb{H}$ by renaming the vertex $C$ by $C^-$. 
We obtain 
$(\Gamma\bullet\mathbb{H})-\{\varphi_C(0),\varphi_C(1)\}=
\Gamma^-\bullet\mathbb{H}^-$. 
It follows from Proposition~\ref{Prop1_Critical_hyper} applied with 
$\Gamma^-$ and $\mathbb{H}^-$ that 
$(\Gamma\bullet\mathbb{H})-\{\varphi_C(0),\varphi_C(1)\}$ is prime. 
Hence, 
$\varphi_C(1)\in N_{\mathscr{P}(\Gamma\bullet\mathbb{H})}(\varphi_C(0))$. 
Set $X=V(\Gamma\bullet\mathbb{H})\setminus\{\varphi_C(0),\varphi_C(1)\}$. 
Since $(\Gamma\bullet\mathbb{H})[X]$ is prime and 
$\varphi_C(1)\in\langle X\rangle_{\Gamma\bullet\mathbb{H}}$, it follows from 
Lemma~\ref{Lcritical} that \eqref{E3_thm1_Critical_hyper} holds. 
It follows from Remark~\ref{Auto_critical_hyper} that 
\begin{equation}\label{E4_thm1_Critical_hyper}
N_{\mathscr{P}(\Gamma\bullet\mathbb{H})}(\varphi_C(2w(C)))=
\{\varphi_C(2w(C)-1)\}. 
\end{equation}

Lastly, we prove that 
\begin{equation}\label{E5_thm1_Critical_hyper}
N_{\mathscr{P}(\Gamma\bullet\mathbb{H})}(\varphi_C(2i+1))=
\{\varphi_C(2i),\varphi_C(2i+2)\}
\end{equation}
for each $i\in\{0,\ldots,w(C)-1\}$. 
It follows from \eqref{E2_thm1_Critical_hyper} that 
$\varphi_C(2i)\in N_{\mathscr{P}(\Gamma\bullet\mathbb{H})}(\varphi_C(2w(C)+1))$ and 
$\varphi_C(2i+2)\in N_{\mathscr{P}(\Gamma\bullet\mathbb{H})}(\varphi_C(2w(C)+1))$. 
By Lemma~\ref{Lcritical}, \eqref{E5_thm1_Critical_hyper} holds. 

Consequently, it follows from \eqref{E2_thm1_Critical_hyper}, \eqref{E3_thm1_Critical_hyper}, \eqref{E4_thm1_Critical_hyper}, \eqref{E5_thm1_Critical_hyper}, and from Lemma~\ref{Lcritical} that 
$\mathscr{P}(\Gamma\bullet\mathbb{H})[V(C)]=C$. 
\end{proof}

\begin{lem}\label{lem_6_Critical_hyper}
Suppose that $\Gamma\bullet\mathbb{H}$ is critical with $v(\Gamma\bullet\mathbb{H})\geq 5$. 
Given $C\in\mathfrak{C}(\Gamma)$ such that $v(C)=2$, $C$ satisfies \eqref{E1bis_thm1bis_Critical_hyper} if and only if the following two assertions hold 
\begin{itemize}
\item $\mathbb{H}-C$ is connected;
\item for every nontrivial module $\mathbb{M}_C$ of $\mathbb{H}-C$, 
we have $\mathbb{M}_C\setminus\mathfrak{C}_{1}(\Gamma)\neq\emptyset$;
\end{itemize}
\end{lem}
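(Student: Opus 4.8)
The plan is to reduce condition \eqref{E1bis_thm1bis_Critical_hyper} for a component $C$ with $v(C)=2$ to a single primality statement about a smaller $\bullet$-hypergraph, and then read off the stated equivalence directly from Proposition~\ref{Prop1_Critical_hyper}.

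First I would set up notation: since $v(C)=2$ we have $C\in\mathfrak{C}_{{\rm even}}(\Gamma)$, $w(C)=1$ and $V(C)=\{\varphi_C(0),\varphi_C(1)\}$. As $C\in\mathfrak{C}(\Gamma)\setminus\mathfrak{C}_{1}(\Gamma)$, Corollary~\ref{cor2_Crticial_hyper} gives $N_{\mathscr{P}(\Gamma\bullet\mathbb{H})}(\varphi_C(0))\subseteq\{\varphi_C(1)\}$ and $N_{\mathscr{P}(\Gamma\bullet\mathbb{H})}(\varphi_C(1))\subseteq\{\varphi_C(0)\}$. Hence the second line of \eqref{E1bis_thm1bis_Critical_hyper} holds automatically, and \eqref{E1bis_thm1bis_Critical_hyper} is equivalent to $\varphi_C(0)\varphi_C(1)\in E(\mathscr{P}(\Gamma\bullet\mathbb{H}))$, that is, by Definition~\ref{defi_primality_graph}, to $(\Gamma\bullet\mathbb{H})-V(C)$ being prime.

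The core of the argument is then the identification $(\Gamma\bullet\mathbb{H})-V(C)=(\Gamma-V(C))\bullet(\mathbb{H}-C)$. Since $V(C)$ is exactly one connected component of $\Gamma$, the graph $\Gamma-V(C)$ has component set $\mathfrak{C}(\Gamma)\setminus\{C\}$ with the same isomorphisms $\varphi_{C'}$, while $\mathbb{H}-C$ retains exactly the edges of $\mathbb{H}$ not containing $C$; in particular $\mathfrak{C}_{1}(\Gamma-V(C))=\mathfrak{C}_{1}(\Gamma)$ because $C$ is not a singleton and deleting its two vertices affects no other component. Because $w(C)=1$, every edge $\varepsilon\in E(\mathbb{H})$ with $C\in\varepsilon$ has $|\varepsilon|=2$, say $\varepsilon=CD$, and $E_\varepsilon$ then consists of the triples $\varphi_C(0)\varphi_C(1)\varphi_D(2k)$ with $0\le k\le w(D)$, each of which contains all of $V(C)$; these are precisely the edges of $\Gamma\bullet\mathbb{H}$ destroyed by deleting $V(C)$, whereas all remaining edges (the sets $E_{C'}$ with $C'\ne C$ and the sets $E_\varepsilon$ with $C\notin\varepsilon$) are disjoint from $V(C)$ and survive. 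Comparing with \eqref{E1_Rbis_Critical_hyper} yields the claimed equality. I expect this bookkeeping, though elementary, to be the step that must be written out most carefully.

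Finally I would apply Proposition~\ref{Prop1_Critical_hyper} to $\Gamma-V(C)$ and $\mathbb{H}-C$. These meet the standing hypotheses: all components of $\Gamma-V(C)$ are still paths, $\mathbb{H}-C$ is still a $\{2,3\}$-hypergraph satisfying the parity conditions \eqref{critical_construction_2} (its edges form a subset of $E(\mathbb{H})$ and the parities of the surviving components are unchanged), $v((\Gamma\bullet\mathbb{H})-V(C))=v(\Gamma\bullet\mathbb{H})-2\ge 3$, and the first clause of \eqref{critical_construction_1} for $\Gamma-V(C)$ follows from the third clause of \eqref{critical_construction_1} applied to the even component $C$, using $|V(\Gamma)\setminus V(C)|=v(\Gamma\bullet\mathbb{H})-2\ge 3\ge 2$ (the remaining clauses are inherited in the same way, and in any event the proof of Proposition~\ref{Prop1_Critical_hyper} uses only Remarks~\ref{v(C)_odd} and~\ref{v(C)_even} and Lemma~\ref{Cl0_Rbis_Critical_hyper}, not \eqref{critical_construction_1}). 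Proposition~\ref{Prop1_Critical_hyper} then asserts that $(\Gamma-V(C))\bullet(\mathbb{H}-C)$ is prime if and only if $\mathbb{H}-C$ is connected and every nontrivial module $\mathbb{M}_C$ of $\mathbb{H}-C$ satisfies $\mathbb{M}_C\setminus\mathfrak{C}_{1}(\Gamma-V(C))\ne\emptyset$, i.e. $\mathbb{M}_C\setminus\mathfrak{C}_{1}(\Gamma)\ne\emptyset$. Combined with the reduction of the first step, this is exactly the asserted equivalence.
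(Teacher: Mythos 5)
Your proposal is correct and follows essentially the same route as the paper: use Corollary~\ref{cor2_Crticial_hyper} to dispose of the neighbourhood condition, identify $(\Gamma\bullet\mathbb{H})-V(C)$ with $(\Gamma-V(C))\bullet(\mathbb{H}-C)$ noting $\mathfrak{C}_{1}(\Gamma-V(C))=\mathfrak{C}_{1}(\Gamma)$, and then apply Proposition~\ref{Prop1_Critical_hyper}. The only difference is that you spell out the edge-level bookkeeping and the verification of the standing hypotheses, which the paper leaves implicit.
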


\begin{proof}
By Corollary~\ref{cor2_Crticial_hyper}, for every $c\in V(C)$, we have 
$N_{\mathscr{P}(\Gamma\bullet\mathbb{H})}(c)\subseteq V(C)$. 
Consider an isomorphism $\varphi_C$ from $P_2$ onto $C$. 
We have $(\Gamma\bullet\mathbb{H})-\{\varphi_C(0),\varphi_C(1)\}=
(\Gamma-V(C))\bullet(\mathbb{H}-C)$. 
Moreover, we have 
$\mathfrak{C}_{1}(\Gamma-V(C))=\mathfrak{C}_{1}(\Gamma)$. 
To conclude, it suffices to apply Proposition~\ref{Prop1_Critical_hyper} to 
$(\Gamma-V(C))\bullet(\mathbb{H}-C)$. 
\end{proof}

\begin{lem}\label{lem_7_Critical_hyper}
Suppose that $\Gamma\bullet\mathbb{H})$ is critical with 
$v(\Gamma\bullet\mathbb{H}))\geq 5$. 
Given $C\in\mathfrak{C}(\Gamma)$ such that $v(C)=3$, $C$ satisfies \eqref{E1bis_thm1bis_Critical_hyper} if and only if the following two assertions hold 
\begin{itemize}
\item $\mathbb{H}_C$ is connected, where $\mathbb{H}_C$ is obtained from $\mathbb{H}$ by replacing $C$ by $\{\varphi_C(0)\}$;
\item for every nontrivial module $\mathbb{M}_C$ of $\mathbb{H}_C$, 
we have $\mathbb{M}_C\setminus(\mathfrak{C}_{1}(\Gamma)\cup\{\varphi_C(0)\})\neq\emptyset$. 
\end{itemize}
\end{lem}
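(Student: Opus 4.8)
Since $v(C)=3$ we have $C\in\mathfrak{C}_{{\rm odd}}(\Gamma)\setminus\mathfrak{C}_1(\Gamma)$ with $w(C)=1$; fix an isomorphism $\varphi_C$ from $P_3$ onto $C$. By Remark~\ref{v(C)_odd}, $E((\Gamma\bullet\mathbb{H})[V(C)])=\{V(C)\}$, and since $w(C)=1$ the only vertices of $C$ occurring in a set $E_\varepsilon$ with $\varepsilon\in E(\mathbb{H})$ and $C\in\varepsilon$ are $\varphi_C(0)$ and $\varphi_C(2)$. Hence $\varphi_C(1)$ belongs to no edge of $\Gamma\bullet\mathbb{H}$ other than $V(C)$. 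The plan is to mimic the proof of Lemma~\ref{lem_6_Critical_hyper}, the difference being that here deleting two vertices of the path $C$ leaves one of its vertices behind.

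First I would reduce \eqref{E1bis_thm1bis_Critical_hyper} to a primality statement. By Corollary~\ref{cor2_Crticial_hyper}, $N_{\mathscr{P}(\Gamma\bullet\mathbb{H})}(c)\subseteq V(C)$ for every $c\in V(C)$, so the second line of \eqref{E1bis_thm1bis_Critical_hyper} always holds; that corollary also gives $N_{\mathscr{P}(\Gamma\bullet\mathbb{H})}(\varphi_C(0))\subseteq\{\varphi_C(1)\}$, $N_{\mathscr{P}(\Gamma\bullet\mathbb{H})}(\varphi_C(2))\subseteq\{\varphi_C(1)\}$ and $N_{\mathscr{P}(\Gamma\bullet\mathbb{H})}(\varphi_C(1))\subseteq\{\varphi_C(0),\varphi_C(2)\}$, so $\mathscr{P}(\Gamma\bullet\mathbb{H})[V(C)]$ is a subgraph of the path $\varphi_C(0)\varphi_C(1)\varphi_C(2)$. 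The automorphism $F_C$ of $\Gamma\bullet\mathbb{H}$ (Remark~\ref{Auto_critical_hyper}) fixes $\varphi_C(1)$ and exchanges $\varphi_C(0)$ with $\varphi_C(2)$, so $\varphi_C(0)\varphi_C(1)\in E(\mathscr{P}(\Gamma\bullet\mathbb{H}))$ if and only if $\varphi_C(1)\varphi_C(2)\in E(\mathscr{P}(\Gamma\bullet\mathbb{H}))$; consequently $\mathscr{P}(\Gamma\bullet\mathbb{H})[V(C)]=C$ if and only if $(\Gamma\bullet\mathbb{H})-\{\varphi_C(1),\varphi_C(2)\}$ is prime. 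Thus $C$ satisfies \eqref{E1bis_thm1bis_Critical_hyper} if and only if $(\Gamma\bullet\mathbb{H})-\{\varphi_C(1),\varphi_C(2)\}$ is prime.

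Next I would identify $(\Gamma\bullet\mathbb{H})-\{\varphi_C(1),\varphi_C(2)\}$ as a hypergraph of the form $\Gamma_C\bullet\mathbb{H}_C$. Let $\Gamma_C$ be obtained from $\Gamma$ by replacing the path $C$ with the isolated vertex $\varphi_C(0)$, and let $\mathbb{H}_C$ be obtained from $\mathbb{H}$ by renaming $C$ as $\{\varphi_C(0)\}$; since $\{\varphi_C(0)\}$ is again an odd component, $\Gamma_C$ and $\mathbb{H}_C$ fall within the scope of Proposition~\ref{Prop1_Critical_hyper}, as in the proof of Lemma~\ref{lem_6_Critical_hyper}. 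By the first paragraph, deleting $\varphi_C(1)$ and $\varphi_C(2)$ removes exactly the edge $V(C)$ together with those edges of the sets $E_\varepsilon$ ($C\in\varepsilon$) that use $\varphi_C(2)$; the edges of $E_\varepsilon$ that use $\varphi_C(0)$ are precisely the edges of the corresponding set for $\mathbb{H}_C$ (the vertex $\{\varphi_C(0)\}$ has weight $0$, hence contributes only $\varphi_C(0)$), and the remaining edges of $\Gamma\bullet\mathbb{H}$ are untouched. Therefore $(\Gamma\bullet\mathbb{H})-\{\varphi_C(1),\varphi_C(2)\}=\Gamma_C\bullet\mathbb{H}_C$.

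Finally I would apply Proposition~\ref{Prop1_Critical_hyper} to $\Gamma_C\bullet\mathbb{H}_C$: it is prime if and only if $\mathbb{H}_C$ is connected and every nontrivial module $\mathbb{M}_C$ of $\mathbb{H}_C$ satisfies $\mathbb{M}_C\setminus\mathfrak{C}_1(\Gamma_C)\neq\emptyset$. Since $C$ had three vertices, replacing it by a single vertex gives $\mathfrak{C}_1(\Gamma_C)=\mathfrak{C}_1(\Gamma)\cup\{\{\varphi_C(0)\}\}$, so this condition becomes $\mathbb{M}_C\setminus(\mathfrak{C}_1(\Gamma)\cup\{\varphi_C(0)\})\neq\emptyset$, which is exactly the conjunction in the statement of the lemma. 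This proves the equivalence. The step I expect to be the most delicate is the identification $(\Gamma\bullet\mathbb{H})-\{\varphi_C(1),\varphi_C(2)\}=\Gamma_C\bullet\mathbb{H}_C$: one must use $w(C)=1$ carefully to see that $\varphi_C(1)$ sits only in $V(C)$ while the cross-component edges through $C$ involve the even-indexed vertices $\varphi_C(0),\varphi_C(2)$, and check that renaming $C$ to a singleton preserves the hypotheses under which Proposition~\ref{Prop1_Critical_hyper} is stated.
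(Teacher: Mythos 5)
Your proposal is correct and takes essentially the same route as the paper: reduce \eqref{E1bis_thm1bis_Critical_hyper} to the primality of $(\Gamma\bullet\mathbb{H})-\{\varphi_C(1),\varphi_C(2)\}$, identify this subhypergraph with $(\Gamma-\{\varphi_C(1),\varphi_C(2)\})\bullet\mathbb{H}_C$ where $\mathfrak{C}_1(\Gamma-\{\varphi_C(1),\varphi_C(2)\})=\mathfrak{C}_1(\Gamma)\cup\{\{\varphi_C(0)\}\}$, and apply Proposition~\ref{Prop1_Critical_hyper}. The only cosmetic difference is that you obtain the symmetry between the two path edges of $C$ via the automorphism $F_C$ of Remark~\ref{Auto_critical_hyper} and exclude the edge $\varphi_C(0)\varphi_C(2)$ directly from Corollary~\ref{cor2_Crticial_hyper}, whereas the paper argues via the module $\{\varphi_C(0),\varphi_C(2)\}$ of $(\Gamma\bullet\mathbb{H})-\varphi_C(1)$ and exhibits a nontrivial module of $(\Gamma\bullet\mathbb{H})-\{\varphi_C(0),\varphi_C(2)\}$; these are equivalent justifications of the same facts.
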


\begin{proof}
By Corollary~\ref{cor2_Crticial_hyper}, for every $c\in V(C)$, we have 
$N_{\mathscr{P}(\Gamma\bullet\mathbb{H})}(c)\subseteq V(C)$. 
Furthermore, consider an isomorphism $\varphi_C$ from $P_3$ onto $C$. 
Denote by $\mathbb{H}_C$ the $\{2,3\}$-hypergraph obtained from $\mathbb{H}$ by replacing $C$ by 
$\{\varphi_C(0)\}$. 
We have 
\begin{equation}\label{E1_lem_7_Critical_hyper}
\begin{cases}
(\Gamma\bullet\mathbb{H})-\{\varphi_C(1),\varphi_C(2)\}=
(\Gamma-\{\varphi_C(1),\varphi_C(2)\})\bullet\mathbb{H}_C\\
\text{and}\\
\mathfrak{C}_{1}(\Gamma-\{\varphi_C(1),\varphi_C(2)\})=\mathfrak{C}_{1}(\Gamma)\cup\{\varphi_C(0)\}.
\end{cases}
\end{equation}

To begin, suppose that $C$ satisfies \eqref{E1bis_thm1bis_Critical_hyper}. 
We obtain that $(\Gamma\bullet\mathbb{H})-\{\varphi_C(1),$ $\varphi_C(2)\}$ is prime. 
By Proposition~\ref{Prop1_Critical_hyper} applied to 
$H_{(\Gamma-\{\varphi_C(1),\varphi_C(2)\},\mathbb{H}_C)}$, both assertions above hold. 

Conversely, suppose that both assertions above hold. 
Since \eqref{E1_lem_7_Critical_hyper} holds, it follows from Proposition~\ref{Prop1_Critical_hyper} that 
$(\Gamma\bullet\mathbb{H})-\{\varphi_C(1),\varphi_C(2)\}$ is prime. 
By Corollary~\ref{cor2_Crticial_hyper}, $\{\varphi_C(0),\varphi_C(2)\}$ is a module of 
$(\Gamma\bullet\mathbb{H})-\varphi_C(1)$. 
Therefore, $(\Gamma\bullet\mathbb{H})-\{\varphi_C(0),\varphi_C(1)\}$ and 
$(\Gamma\bullet\mathbb{H})-\{\varphi_C(1),\varphi_C(2)\}$ are isomorphic. 
It follows that $(\Gamma\bullet\mathbb{H})-\{\varphi_C(0),\varphi_C(1)\}$ is prime. 
Lastly, it follows from Corollary~\ref{cor2_Crticial_hyper} that 
$V(\Gamma\bullet\mathbb{H})\setminus\{\varphi_C(0),\varphi_C(1)\}$ is a module of 
$(\Gamma\bullet\mathbb{H})-\varphi_C(0)$. 
Thus, $V(\Gamma\bullet\mathbb{H})\setminus\{\varphi_C(0),\varphi_C(1),\varphi_C(2)\}$ is a non trivial module of $(\Gamma\bullet\mathbb{H})-\{\varphi_C(0),\varphi_C(2)\}$. 
Hence, $(\Gamma\bullet\mathbb{H})-\{\varphi_C(0),\varphi_C(2)\}$ is decomposable. 
It follows that 
$\mathscr{P}(\Gamma\bullet\mathbb{H})[$ $V(C)]=C$. 
It follows that $C$ satisfies \eqref{E1bis_thm1bis_Critical_hyper}. 
\end{proof}

\begin{lem}\label{lem_8_Critical_hyper}
Suppose that $\Gamma\bullet\mathbb{H}$ is critical with $v(\Gamma\bullet\mathbb{H})\geq 5$. 
For every $v\in V_1(\Gamma)$, we have $d_{\mathscr{P}(\Gamma\bullet\mathbb{H})}=1$ if and only if there exists $w\in V_1(\Gamma)$ satisfying 
\begin{itemize}
\item $\{w\}$ is an isolated vertex of $\mathbb{H}-\{v\}$;
\item $\mathbb{H}-\{\{v\},\{w\}\}$ is connected;
\item for every nontrivial module $\mathbb{M}_{vw}$ of $\mathbb{H}-\{\{v\},\{w\}\}$, 
we have $\mathbb{M}_{vw}\setminus(\mathfrak{C}_{1}(\Gamma)\setminus\{\{v\},\{w\}\})\neq\emptyset$. 
\end{itemize}
\end{lem}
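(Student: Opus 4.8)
The plan is to reduce the statement to Proposition~\ref{Prop1_Critical_hyper} and the first two assertions of Lemma~\ref{Lcritical}, via one key observation. Recall that since $v\in V_1(\Gamma)$ we have $(\Gamma\bullet\mathbb{H})-v=(\Gamma-v)\bullet(\mathbb{H}-\{v\})$, and that for $w\in V_1(\Gamma)\setminus\{v\}$ we have $(\Gamma\bullet\mathbb{H})-\{v,w\}=(\Gamma-\{v,w\})\bullet(\mathbb{H}-\{\{v\},\{w\}\})$ with $\mathfrak{C}_1(\Gamma-\{v,w\})=\mathfrak{C}_1(\Gamma)\setminus\{\{v\},\{w\}\}$. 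The observation is that for $w\in V_1(\Gamma)\setminus\{v\}$ the three statements ``$\{w\}$ is an isolated vertex of $\mathbb{H}-\{v\}$'', ``$w$ lies in no hyperedge of $(\Gamma\bullet\mathbb{H})-v$'', and ``$V(\Gamma\bullet\mathbb{H})\setminus\{v,w\}$ is a module of $(\Gamma\bullet\mathbb{H})-v$'' are equivalent. The first two match by inspecting the definition of $\bullet$: the sets $E_C$ involve only non-singleton components, and each $E_\varepsilon$ is nonempty and uses exactly the components occurring in $\varepsilon$, so $w$ occurs in some hyperedge of $(\Gamma-v)\bullet(\mathbb{H}-\{v\})$ precisely when $\{w\}\in\varepsilon$ for some $\varepsilon\in E(\mathbb{H}-\{v\})$. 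The last two match because a hyperedge containing $w$ is a three-element set meeting $V(\Gamma\bullet\mathbb{H})\setminus\{v,w\}$ in two points (violating the module condition), whereas if $w$ lies in no hyperedge then every hyperedge of $(\Gamma\bullet\mathbb{H})-v$ is contained in $V(\Gamma\bullet\mathbb{H})\setminus\{v,w\}$.

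For the direct implication, assume $d_{\mathscr{P}(\Gamma\bullet\mathbb{H})}(v)=1$ and let $w$ be the unique neighbour of $v$. By Corollary~\ref{cor2_Crticial_hyper} a neighbour of $v$ cannot lie in a non-singleton component (it would have to be adjacent to $v$ inside that component, forcing $v$ into it), so $w\in V_1(\Gamma)$. Since $(\Gamma\bullet\mathbb{H})-\{v,w\}=(\Gamma-\{v,w\})\bullet(\mathbb{H}-\{\{v\},\{w\}\})$ is prime, applying Proposition~\ref{Prop1_Critical_hyper} to it (exactly as in Lemmas~\ref{lem_6_Critical_hyper} and \ref{lem_7_Critical_hyper}, and using the identity for $\mathfrak{C}_1$ above) yields the second and third bullets. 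Since $N_{\mathscr{P}(\Gamma\bullet\mathbb{H})}(v)=\{w\}$, the first assertion of Lemma~\ref{Lcritical} shows that $V(\Gamma\bullet\mathbb{H})\setminus\{v,w\}$ is a module of $(\Gamma\bullet\mathbb{H})-v$, hence by the observation $\{w\}$ is isolated in $\mathbb{H}-\{v\}$, which is the first bullet.

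For the converse, take $w\in V_1(\Gamma)$ satisfying the three bullets. By Proposition~\ref{Prop1_Critical_hyper}, the second and third bullets make $(\Gamma\bullet\mathbb{H})-\{v,w\}=(\Gamma-\{v,w\})\bullet(\mathbb{H}-\{\{v\},\{w\}\})$ prime, so $d_{\mathscr{P}(\Gamma\bullet\mathbb{H})}(v)\geq1$; suppose for contradiction $d_{\mathscr{P}(\Gamma\bullet\mathbb{H})}(v)=2$, say $N_{\mathscr{P}(\Gamma\bullet\mathbb{H})}(v)=\{w,w'\}$ with $w'\in V_1(\Gamma)$ by Corollary~\ref{cor2_Crticial_hyper}. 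By the second assertion of Lemma~\ref{Lcritical}, $\{w,w'\}$ is a module of $(\Gamma\bullet\mathbb{H})-v$; since, by the first bullet and the observation, $w$ lies in no hyperedge of $(\Gamma\bullet\mathbb{H})-v$, the module condition applied to any hyperedge through $w'$ would force $(e\setminus\{w'\})\cup\{w\}$ to be a hyperedge, a contradiction, so $w'$ lies in no hyperedge of $(\Gamma\bullet\mathbb{H})-v$ either, a fortiori in none of $(\Gamma\bullet\mathbb{H})-\{v,w\}$; hence $V(\Gamma\bullet\mathbb{H})\setminus\{v,w,w'\}$, which has at least two elements since $v(\Gamma\bullet\mathbb{H})\geq5$, is a nontrivial module of the prime hypergraph $(\Gamma\bullet\mathbb{H})-\{v,w\}$, a contradiction. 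Thus $d_{\mathscr{P}(\Gamma\bullet\mathbb{H})}(v)=1$. The one point that must be handled with care — just as in the proofs of Lemmas~\ref{lem_6_Critical_hyper} and \ref{lem_7_Critical_hyper} — is verifying that the reduced pair $(\Gamma-\{v,w\},\mathbb{H}-\{\{v\},\{w\}\})$ still satisfies the standing hypotheses \eqref{critical_construction_0}, \eqref{critical_construction_1} and \eqref{critical_construction_2} before invoking Proposition~\ref{Prop1_Critical_hyper}; the edge conditions are inherited and the rest follows from the criticality and size of $\Gamma\bullet\mathbb{H}$, but this is not vacuous and should be spelled out.
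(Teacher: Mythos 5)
Your proposal is correct and follows essentially the same route as the paper: both directions rest on the identity $(\Gamma\bullet\mathbb{H})-\{v,w\}=(\Gamma-\{v,w\})\bullet(\mathbb{H}-\{\{v\},\{w\}\})$ together with Proposition~\ref{Prop1_Critical_hyper}, and on translating the first bullet, via Lemma~\ref{Lcritical}, into the fact that no edge of $(\Gamma\bullet\mathbb{H})-v$ meets $w$ (the paper's claim \eqref{E2_lem_8_Critical_hyper}, which your ``observation'' repackages). The only minor divergence is in the converse: the paper uses Lemma~\ref{lem1bis_Crticial_hyper} and restriction of the module $V(\Gamma\bullet\mathbb{H})\setminus\{v,w\}$ of $(\Gamma\bullet\mathbb{H})-v$ to show directly that $u\notin N_{\mathscr{P}(\Gamma\bullet\mathbb{H})}(v)$ for every $u\neq w$, whereas you exclude a second neighbour $w'$ by the degree bound and assertion~2 of Lemma~\ref{Lcritical}, contradicting the primality of $(\Gamma\bullet\mathbb{H})-\{v,w\}$; both arguments are sound, and your closing caveat about checking the standing hypotheses for the reduced pair is a point the paper itself leaves implicit.
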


\begin{proof}
To begin, suppose that $d_{\mathscr{P}(\Gamma\bullet\mathbb{H})}(v)=1$. 
Denote by $w$ the unique neighbour of $v$ in $\mathscr{P}(\Gamma\bullet\mathbb{H})$. 
It follows from Proposition~\ref{Prop2_Critical_hyper}, Lemma~\ref{lem_6_Critical_hyper}, and 
Lemma~\ref{lem_7_Critical_hyper} that $w\in V_1(\Gamma)$. 
We have 
\begin{equation}\label{E1_lem_8_Critical_hyper}
\begin{cases}
(\Gamma\bullet\mathbb{H})-\{v,w\}=
(\Gamma-\{v,w\})\bullet(\mathbb{H}-\{\{v\},\{w\}\})\\
\text{and}\\
\mathfrak{C}_{1}(\Gamma-\{v,w\})=\mathfrak{C}_{1}(\Gamma)\setminus\{\{v\},\{w\}\}.
\end{cases}
\end{equation}
Since $N_{\mathscr{P}(\Gamma\bullet\mathbb{H})}(v)=\{w\}$, $(\Gamma\bullet\mathbb{H})-\{v,w\}$ is prime. 
By Proposition~\ref{Prop1_Critical_hyper} applied to 
$(\Gamma-\{v,w\})\bullet(\mathbb{H}-\{\{v\},\{w\}\})$, 
$\mathbb{H}-\{\{v\},\{w\}\}$ is connected, and 
for every nontrivial module $\mathbb{M}_{vw}$ of $\mathbb{H}-\{\{v\},\{w\}\}$, 
$\mathbb{M}_{vw}\setminus(\mathfrak{C}_{1}(\Gamma)\setminus\{\{v\},\{w\}\})\neq\emptyset$. 
Moreover, since $N_{\mathscr{P}(\Gamma\bullet\mathbb{H})}(v)=\{w\}$, it follows from Lemma~\ref{Lcritical} that $V(\Gamma\bullet\mathbb{H})\setminus\{v,w\}$ is a module of $(\Gamma\bullet\mathbb{H})-v$. 
We show that 
\begin{equation}\label{E2_lem_8_Critical_hyper}
\text{for each $\varepsilon\in E(\mathbb{H})$, if $\{w\}\in \varepsilon$, then $\{v\}\in \varepsilon$.}
\end{equation}
Indeed, consider $\varepsilon\in E(\mathbb{H})$ such that $\{w\}\in \varepsilon$. 
For a contradiction, suppose that $|\varepsilon|=2$. 
There exists $C\in\mathfrak{C}_{{\rm even}}(\Gamma)$ such that $\varepsilon=\{w\}C$.  
We obtain $C\subseteq V(\Gamma\bullet\mathbb{H})\setminus\{v,w\}$ and 
$w\varphi_C(0)\varphi_C(1)\in E(\Gamma\bullet\mathbb{H})$, which contradicts the fact that 
$V(\Gamma\bullet\mathbb{H})\setminus\{v,w\}$ is a module of $(\Gamma\bullet\mathbb{H})-v$. 
Consequently, we have $|\varepsilon|=3$. 
Thus, there exist distinct $C,D\in\mathfrak{C}_{{\rm odd}}(\Gamma)$ such that $\varepsilon=\{w\}CD$.  
If $C\neq\{v\}$ and $D\neq\{v\}$, then 
$\varphi_C(0),\varphi_D(0)\in V(\Gamma\bullet\mathbb{H})\setminus\{v,w\}$ and 
$w\varphi_C(0)\varphi_D(0)\in E(\Gamma\bullet\mathbb{H})$, which contradicts the fact that 
$V(\Gamma\bullet\mathbb{H})\setminus\{v,w\}$ is a module of $(\Gamma\bullet\mathbb{H})-v$. 
Therefore, $C=\{v\}$ or $D=\{v\}$. 
Hence, \eqref{E2_lem_8_Critical_hyper} holds. 
It follows that $\{w\}$ is an isolated vertex of $\mathbb{H}-\{v\}$. 

Conversely, suppose that there exists $w\in V_1(\Gamma)$ such that the three assertions above hold. 
Since \eqref{E1_lem_8_Critical_hyper} holds, it follows from Proposition~\ref{Prop1_Critical_hyper} applied to 
$(\Gamma-\{v,w\})\bullet(\mathbb{H}-\{\{v\},\{w\}\})$ 
that $(\Gamma\bullet\mathbb{H})-\{v,w\}$ is prime. 
Thus, we have 
\begin{equation}\label{E3_lem_8_Critical_hyper}
w\in N_{\mathscr{P}(\Gamma\bullet\mathbb{H})}(v).
\end{equation}
Furthermore, since $\{w\}$ is an isolated vertex of $\mathbb{H}-\{v\}$, $\mathbb{H}[\{\{w\}\}]$ 
is a component of $\mathbb{H}-\{v\}$. 
By Lemma~\ref{lem1bis_Crticial_hyper}, 
$V(\Gamma\bullet\mathbb{H})\setminus\{v,w\}$ is a module of $(\Gamma\bullet\mathbb{H})-v$. 
Consider any $u\in V(\Gamma\bullet\mathbb{H})\setminus\{v,w\}$. 
We obtain that 
$V(\Gamma\bullet\mathbb{H})\setminus\{u,v,w\}$ is a module of $(\Gamma\bullet\mathbb{H})-\{u,v\}$. 
Hence, we obtain $u\not\in N_{\mathscr{P}(\Gamma\bullet\mathbb{H})}(v)$. 
It follows from \eqref{E3_lem_8_Critical_hyper} that 
$N_{\mathscr{P}(\Gamma\bullet\mathbb{H})}(v)=\{w\}$. 
\end{proof}

\begin{proof}[Proof of Theorem~\ref{thm_3_main}]
To begin, suppose that $\Gamma\bullet\mathbb{H}$ is critical and 
$\mathscr{P}(\Gamma\bullet\mathbb{H})=\Gamma$. 
Since $\Gamma\bullet\mathbb{H}$ is critical, it follows from Theorem~\ref{thm_2_main} that Assertions (C1), (C2), and (C3) hold. 
For Assertion (C4), consider $C\in\mathfrak{C}_{{\rm even}}(\Gamma)$ such that $v(C)=2$. 
By Corollary~\ref{cor2_Crticial_hyper}, for every $c\in V(C)$, we have 
$N_{\mathscr{P}(\Gamma\bullet\mathbb{H})}(c)\subseteq V(C)$. 
Therefore, since $\mathscr{P}(\Gamma\bullet\mathbb{H})=\Gamma$, 
$C$ satisfies \eqref{E1bis_thm1bis_Critical_hyper}. 
By Lemma~\ref{lem_6_Critical_hyper}, $C$ satisfies Assertion (C4). 

For Assertion (C5), consider $C\in\mathfrak{C}_{{\rm odd}}(\Gamma)$ 
such that $v(C)=3$. 
By Corollary~\ref{cor2_Crticial_hyper}, for every $c\in V(C)$, we have 
$N_{\mathscr{P}(\Gamma\bullet\mathbb{H})}(c)\subseteq V(C)$. 
Therefore, since $\mathscr{P}(\Gamma\bullet\mathbb{H})=\Gamma$, 
$C$ satisfies \eqref{E1bis_thm1bis_Critical_hyper}. 
By Lemma~\ref{lem_7_Critical_hyper}, $C$ satisfies Assertion (C5).

For Assertion (C6), consider distinct 
$C,D\in\mathfrak{C}_1(\Gamma)$ such that $D$ is an isolated vertex of $\mathbb{H}-C$ and $\mathbb{H}\setminus\{C,D\}$ is connected. 
Since $\mathscr{P}(\Gamma\bullet\mathbb{H})=\Gamma$, we have 
$d_{\mathscr{P}(\Gamma\bullet\mathbb{H})}(C)=0$. 
It follows from Lemma~\ref{lem_8_Critical_hyper} that there exists a 
nontrivial module $\mathbb{M}$ of $\mathbb{H}-\{C,D\}$ such that 
$\mathbb{M}\subseteq\mathfrak{C}_{1}(\Gamma)\setminus\{C,D\}$. 

Conversely, suppose that Assertions (C1),...,(C6) hold. 
By Theorem~\ref{thm_2_main}, $\Gamma\bullet\mathbb{H}$ is critical. 
It remains to show that 
\begin{equation}\label{E1_thm2_Critical_hyper}
\mathscr{P}(\Gamma\bullet\mathbb{H})=\Gamma.
\end{equation}
It follows from Proposition~\ref{Prop2_Critical_hyper} that 
for each $C\in\mathfrak{C}(\Gamma)$ such that $v(C)\geq 4$, $C$ satisfies \eqref{E1bis_thm1bis_Critical_hyper}. 
Furthermore, since Assertion (C4) holds, it follows from Lemma~\ref{lem_6_Critical_hyper} that 
\eqref{E1bis_thm1bis_Critical_hyper} is satisfied by each 
$C\in\mathfrak{C}_{{\rm even}}(\Gamma)$ such that $v(C)=2$. 
Similarly, since Assertion (C5) holds, it follows from Lemma~\ref{lem_7_Critical_hyper} that 
\eqref{E1bis_thm1bis_Critical_hyper} is satisfied by each 
$C\in\mathfrak{C}_{{\rm even}}(\Gamma)$ such that $v(C)=3$. 
It follows that \eqref{E1bis_thm1bis_Critical_hyper} is satisfied by every 
$C\in\mathfrak{C}(\Gamma)\setminus\mathfrak{C}_{1}(\Gamma)$. 
To prove that \eqref{E1_thm2_Critical_hyper} holds, it remains to prove that 
$d_{\mathscr{P}(\Gamma\bullet\mathbb{H})}(v)=0$ for each $v\in V_1(\Gamma)$. 
Set $\mathcal{N}=\{v\in V_1(\Gamma):d_{\mathscr{P}(\Gamma\bullet\mathbb{H})}(v)\neq 0\}$. 
For a contradiction, suppose that $\mathcal{N}\neq\emptyset$. 
Since \eqref{E1bis_thm1bis_Critical_hyper} is satisfied by every 
$C\in\mathfrak{C}(\Gamma)\setminus\mathfrak{C}_{1}(\Gamma)$, we get 
\begin{equation}\label{E3_thm2_Critical_hyper}
\text{for each $v\in\mathcal{N}$, 
$N_{\mathscr{P}(\Gamma\bullet\mathbb{H})}(v)\subseteq\mathcal{N}$.}
\end{equation}
For a contradiction, suppose that there exists $v\in\mathcal{N}$ such that 
$d_{\mathscr{P}(\Gamma\bullet\mathbb{H})}(v)=1$. 
By Lemma~\ref{lem_8_Critical_hyper}, 
there exists $w\in V_1(\Gamma)$ satisfying 
\begin{itemize}
\item $\{w\}$ is an isolated vertex of $\mathbb{H}-\{v\}$;
\item $\mathbb{H}-\{\{v\},\{w\}\}$ is connected;
\item for every nontrivial module $\mathbb{M}_{vw}$ of $\mathbb{H}-\{\{v\},\{w\}\}$, 
we have $\mathbb{M}_{vw}\setminus(\mathfrak{C}_{1}(\Gamma)\setminus\{\{v\},\{w\}\})\neq\emptyset$. 
\end{itemize}
Therefore, Assertion (C6) is not satisfied for $C=\{v\}$ and $D=\{w\}$. 
It follows from Lemma~\ref{Lcritical} that $d_{\mathscr{P}(\Gamma\bullet\mathbb{H})}(v)=2$ 
for every $v\in\mathcal{N}$. 
Moreover, by \eqref{E3_thm2_Critical_hyper}, $N_{\mathscr{P}(\Gamma\bullet\mathbb{H})}(v)\subseteq\mathcal{N}$ for every $v\in\mathcal{N}$. 
Therefore, $\mathscr{P}(\Gamma\bullet\mathbb{H})$ admits a component $\mathcal{C}$ such that 
$v(\mathcal{C})\geq 3$, 
$V(\mathcal{C})\subseteq\mathcal{N}$, and 
$\mathscr{P}(\Gamma\bullet\mathbb{H})[V(\mathcal{C})]$ is a cycle. 
Since $\mathfrak{C}(\Gamma)\setminus\mathfrak{C}_1(\Gamma)\neq\emptyset$ by \eqref{critical_construction_1}, we obtain 
$V(\mathcal{C})\subsetneq V(\Gamma\bullet\mathbb{H})$, which contradicts Proposition~\ref{Pcritical_T2n+1} because $\Gamma\bullet\mathbb{H}$ is critical. 
It follows that $\mathcal{N}=\emptyset$. 
Therefore, \eqref{E1_thm2_Critical_hyper} holds. 
\end{proof}

\end{document}